\def\@secnumfont{\bfseries\scshape}
\def\section{\@startsection{section}{1}%
  \z@{.8\linespacing\@plus\linespacing}{.5\linespacing}%
  {\normalfont\large\bfseries\scshape\centering}}
\def\subsection{\@startsection{subsection}{2}%
  \z@{.5\linespacing\@plus.7\linespacing}{-.5em}%
  {\normalfont\bfseries\scshape}}
\def\subsubsection{\@startsection{subsubsection}{3}%
  \z@{.5\linespacing\@plus.7\linespacing}{-.5em}%
  {\normalfont\scshape}}
\def\specialsection{\@startsection{section}{1}%
  \z@{\linespacing\@plus\linespacing}{.5\linespacing}%
  {\normalfont\centering\large\bfseries\scshape}}
\numberwithin{equation}{section}
\definecolor{shadecolor}{gray}{.94}
\newenvironment{myshade}{%
  \topsep4\p@\@plus4\p@\relax%
  \MakeFramed{\advance\hsize-\width \FrameRestore}}%
 {\par\unskip\endMakeFramed}%
\newtheoremstyle{mytheorem}{0}{0}%
     {\itshape}
     {}
     {\bfseries}
     {. }
     {0.3ex}
     {\thmname{{\bfseries #1}}\thmnumber{ {\bfseries #2}}\thmnote{ (#3)}}  
\theoremstyle{mytheorem}
\newtheorem{theo}{Theorem}[section]
\newenvironment{theorem}{\begin{myshade}\begin{theo}}{\end{theo}\end{myshade}}
\newtheorem{prop}[theo]{Proposition}
\newenvironment{proposition}{\begin{myshade}\begin{prop}}{\end{prop}\end{myshade}}
\newtheorem{lem}[theo]{Lemma}
\newenvironment{lemma}{\begin{myshade}\begin{lem}}{\end{lem}\end{myshade}}
\newtheorem{defin}[theo]{Definition}
\newenvironment{definition}{\begin{myshade}\begin{defin}}{\end{defin}\end{myshade}}
\newtheorem{cor}[theo]{Corollary}
\newtheoremstyle{mydefinition}{.7\linespacing\@plus.3\linespacing}{.7\linespacing\@plus.3\linespacing}%
     {\rmfamily}
     {}
     {\bfseries}
     {. }
     {0.3ex}
     {\thmname{{\bfseries #1}}\thmnumber{ {\bfseries #2}}\thmnote{ (#3)}}  
\theoremstyle{mydefinition}
\newtheorem{example}[theo]{Example}
\newtheorem{remark}[theo]{Remark}
\newenvironment{myenumerate}{%
\renewcommand{\theenumi}{\arabic{enumi}}%
\renewcommand{\labelenumi}{{\rm(\theenumi)}}%
\begin{list}{\labelenumi}
	{%
	\setlength{\itemsep}{0.4em}%
	\setlength{\topsep}{0.5em}%
	\setlength\leftmargin{2.45em}%
	\setlength\labelwidth{2.05em}%
	\setlength{\labelsep}{0.4em}%
	\usecounter{enumi}%
	}%
	}%
{\end{list}
}
\renewenvironment{enumerate}{
\begin{myenumerate}}%
{\end{myenumerate}}
\newenvironment{myitemize}{%
\begin{list}{$\bullet$}%
 	{%
	\setlength{\itemsep}{0.4em}%
	\setlength{\topsep}{0.5em}%
	\setlength\leftmargin{2.65em}%
	\setlength\labelwidth{2.65em}%
	\setlength{\labelsep}{0.4em}%
	}%
	}%
{\end{list}}
\renewenvironment{itemize}{
\begin{myitemize}}%
{\end{myitemize}}
\newcommand{\R}{\mathbb{R}}
\newcommand{\N}{\mathbb{N}}
\newcommand{\Z}{\mathbb{Z}}
\newcommand{\cB}{{\mathcal B}}
\newcommand{\cC}{{\mathcal C}}
\newcommand{\cD}{{\mathcal D}}
\newcommand{\cM}{{\mathcal M}}
\newcommand{\cR}{{\mathcal R}}
\renewcommand{\epsilon}{\varepsilon}
\newcommand{\co}{\mathrm{coh}}
\newcommand{\ho}{\mathrm{hom}}
\newcommand{\balpha}{\boldsymbol{\alpha}}
\newcommand{\bbeta}{\boldsymbol{\beta}}
\newcommand{\vertiii}[1]{{\left\vert\kern-0.25ex\left\vert\kern-0.25ex\left\vert #1 
    \right\vert\kern-0.25ex\right\vert\kern-0.25ex\right\vert}}
\def\d{\, \mathrm{d}}
\def\and{\ and }
\date{\today}
\title[Reconstruction Theorem without Regularity Structures]{Hairer's Reconstruction Theorem\\
without Regularity Structures}
\author[F. Caravenna]{Francesco Caravenna}
\address{Dipartimento di Matematica e Applicazioni\\
 Universit\`a degli Studi di Milano-Bicocca\\
 via Cozzi 55, 20125 Milano, Italy}
\email{francesco.caravenna@unimib.it}
\author[L. Zambotti]{Lorenzo Zambotti}
\address{Laboratoire de Probabilit\'es, Statistique et Mod\'elisation, 
Sorbonne Universit\'e,  Universit\'e de Paris, CNRS,
4 Place Jussieu, 75005 Paris, France}
\email{zambotti@lpsm.paris}
\keywords{Distributions, Reconstruction Theorem, Regularity Structures}
\subjclass[2010]{Primary: 46F99, 60L30}
\begin{document}

\begin{abstract}
This survey is devoted to Martin Hairer's Reconstruction Theorem, 
which is one of the cornerstones of his theory
of Regularity Structures~\cite{Hairer2014d}. 
Our aim is to give a new self-contained and elementary proof of this Theorem, together
with some applications, including a characterization,
based on a single arbitrary test function, of negative H\"older spaces.
We present the Reconstruction Theorem as a general result in the theory 
of distributions that can be understood without
any knowledge of Regularity Structures themselves, which we do not even need to define.
\end{abstract}

\maketitle



\section{Introduction}

Consider the following problem: 
if at each point $x\in\R^d$ we are given a distribution
(generalized function) $F_x$ on $\R^d$, is there a distribution $f$ on $\R^d$ which
is well approximated by $F_x$ around
each point $x\in\R^d$? 

A classical example is when $f:\R^d\to\R$ is a smooth function
and $F_x$ is the Taylor polynomial of $f$ based at $x$, of some
fixed order $r\in\N$; 
then we know that $f(y)-F_x(y)$ is of order $|y-x|^{r+1}$ for $y\in\R^d$ close to $x$. 
Of course, in this example $F_x$ is built from $f$, which is known in advance.
We are rather interested in the reverse problem
of finding $f$ given a (suitable) family of $F_x$'s,
as in Whitney's Extension Theorem \cite{Whi34}.
However if we allow the local descriptions $F_x$ to be non-smooth and even distributions, 
then existence and uniqueness of such $f$ become non-trivial.

Martin Hairer's Reconstruction Theorem~\cite{Hairer2014d}
provides a complete and elegant solution to this problem.
We present here an enhanced version of this result
which allows to prove existence and uniqueness of $f$ under an \emph{optimal} assumption
on the family of distributions $(F_x)_{x\in\R^d}$, that we call \emph{coherence}.
We also present some applications of independent interest, including a characterization
of negative H\"older spaces based on a single \emph{arbitrary} test function.

The Reconstruction Theorem
was originally formulated in the framework
of Hairer's theory of \emph{regularity structures}~\cite{Hairer2014d}. In this survey
we state and prove this result without any reference to regularity structures,
which we do not even define. 
The original motivation for this theory was stochastic analysis,
but here we present the Reconstruction Theorem
in a completely analytical and deterministic framework.
Our approach contains novel ideas and techniques which 
may be generalized to other settings, e.g.\ to distributions on manifolds.

Although regularity structures have already attracted 
a lot of attention, we hope that this survey will give the 
opportunity to an even larger audience to become familiar with some of the ideas of this theory, 
which may still find applications outside the area which
motivated it first.

\subsection*{A look at the literature}

With his theory of Rough Paths \cite{Lyons1998}, Terry Lyons
introduced the idea of a local description
of the solution to a stochastic differential equation as a generalized Taylor expansion,
where classical monomials are replaced
by iterated integrals of the driving Brownian motion. 
This idea led Massimiliano Gubinelli to introduce his Sewing Lemma \cite{Gubinelli2004}, which is 
a version of the Reconstruction Theorem in~$\R^1$ (the name ``Sewing Lemma'' is 
actually
due to Feyel and de La Pradelle \cite{FeDe06}, who gave the proof which is now commonly used).
With his theory of regularity structures~\cite{Hairer2014d}, Martin Hairer translated 
these techniques in the context of  \emph{stochastic partial differential equations} (SPDEs),
whose solutions are  defined on $\R^d$ with $d>1$
(see \cite{Zambotti20} for a history of SPDEs).

The first proof of the Reconstruction Theorem was based on wavelets \cite{Hairer2014d}. 
Later Otto-Weber \cite{ow19}
proposed a self-contained approach based on semigroup methods. 
The core of our proof is based on
elementary multiscale arguments, which allow to characterize the regularity
of a distribution via scaling of a \emph{single arbitrary test function}.
The second edition of Friz-Hairer's book \cite{Friz2020}
contains a proof close in spirit to the one presented here.
For other proofs of versions of the Reconstruction Theorem, see \cite{GIP15,HL17,mw18,ST18}.

\subsection*{Outline of the paper}

In \Cref{sec:notation} we set the notation used throughout this survey
and in \Cref{sec:bg-distributions} we recall basic facts on test functions and distributions.

In \Cref{sec:germs} we define the key notion of \emph{germs of distributions} and the 
property of \emph{coherence}. This leads directly to
the Reconstruction Theorem in \Cref{sec:reco}, see \Cref{th:reco+}.
We then show in \Cref{sec:necessity} that the coherence condition is \emph{optimal}.

The core of the paper,
from \Cref{sec:conv} to \Cref{sec:reco+neg},
is devoted to the proof of the Reconstruction Theorem
(see \Cref{sec:guide} for a guide).

The last sections
are devoted to applications of the Reconstruction Theorem.
In \Cref{sec:negHolder} we study negative H\"older spaces, providing criteria
based on a single arbitrary test function, see \Cref{th:charHolder}.
In \Cref{sec:coherent+} we investigate more closely the coherence condition.
In \Cref{sec:Young} we construct a suitable product between distributions and non smooth
functions, see \Cref{th:Young},
which is a multi-dimensional analogue of Young integration.

\subsection*{Acknowledgements}

We are very grateful to Massimiliano Gubinelli 
for many inspiring discussions and for suggesting the name \emph{coherence}.
We also thank Malek Abdesselam, Florian Bechtold, Carlo Bellingeri, Lucas Broux,
Tommaso Cornelis Rosati, Henri Elad-Altman, Peter Friz, Martin Hairer, Cyril Labb\'e, 
David Lee, Sylvie Paycha, Nicolas Perkowski, Hendrik Weber
for precious feedback on earlier versions of this manuscript.

\section{Notation}
\label{sec:notation}

We work on the domain $\R^d$, equipped with the Euclidean norm $|\cdot|$.
We denote by $B(x,r) = \{z \in \R^d: \ |z-x| \le r\}$ 
the closed ball centered at $x$ of radius $r$.
The $R$-enlargement of a set $K \subseteq \R^d$ is denoted by
\begin{equation} \label{eq:enlargement}
	\bar{K}_R := K + B(0,R) = \{z\in\R^d: \ |z-x| \le R \text{ for some } x \in K
	\} \,.
\end{equation}
Partial derivatives of a differentiable function $\varphi: \R^d \to \R$ are denoted by
\begin{equation*}
	\partial^k \varphi = \partial_{x_1}^{k_1} \cdots \partial_{x_d}^{k_d} \varphi
	\qquad \text{for a multi-index } k = (k_1, \ldots, k_d) \in \N_0^{d} \,,
\end{equation*}
where $\N_0 = \{0,1,2,\ldots\}$ and we set $|k| := k_1 + \ldots + k_d$.
If $k_i = 0$ then $\partial^{k_i}_{x_i}\varphi := \varphi$.

For functions $\varphi: \R^d \to \R$ we use the standard notation
\begin{equation*}
	\|\varphi\|_\infty := \sup_{x\in {\R^d}} \, |\varphi(x)| \,.
\end{equation*}

We denote by $C^r$, for $r \in \N_0\cup\{ \infty\}$, the space of
functions $\varphi: \R^d \to \R$ which admit
continuous derivatives $\partial^k \varphi$ for every multi-index $k$ with $|k| \le r$.
We set
\begin{equation} \label{eq:normCr}
	\|\varphi\|_{C^r} := \max_{|k| \le r} \| \partial^k \varphi \|_\infty \,.
\end{equation}

We denote by $\cC^\alpha$, for $\alpha>0$, the space of
\emph{locally $\alpha$-H\"older functions} $\varphi :\R^d\to\R$.
More explicitly, $\varphi \in \cC^\alpha$ means that:
\begin{enumerate}
\item $\varphi$ is of class $C^r$, where $r=\underline{r}(\alpha):=\max\{n\in\N_0:n<\alpha\}$;
\item uniformly for $x,y$ in compact sets we have
\begin{equation}\label{eq:holdergamma}
	\left| \varphi(y)-F_x(y)\right|\lesssim |y-x|^\alpha
\end{equation}
where $F_x(\cdot)$ is the Taylor polynomial of $\varphi$ of order $r$ based at $x$, namely
\begin{equation}\label{eq:polyF}
	F_x(y):=\sum_{|k|< \alpha} \partial^k \varphi(x) \, \frac{(y-x)^k}{k!}, \qquad y\in\R^d\,.
\end{equation}
\end{enumerate}

\begin{remark}
The meaning of $\lesssim$ in \eqref{eq:holdergamma} is that for any compact set $K \subseteq \R^d$
there is a constant $C = C_K < \infty$ such that 
$| \varphi(y)-F_x(y)| \le C |y-x|^\alpha$ for all $x,y \in K$.
\emph{This notation will be used extensively throughout the paper.}
\end{remark}

\begin{remark}
For $r\in\N$ and $\alpha < r \le \alpha'$ we have the (strict) inclusions 
$\cC^{\alpha'} \subset C^r \subset \cC^{\alpha}$.
We stress that for $r\in\N$ the space $\cC^r$ is strictly larger than $C^r$
(for instance, $\cC^1$ is the space of locally Lipschitz functions,
and similarly $\cC^r$ is the space of functions in $C^{r-1}$
whose derivatives of order $r-1$ are locally Lipschitz). 
Incidentally, we note that other definitions of the space $\cC^r$
for $r\in\N$ are possible, see e.g.\ \cite{HL17}.
The one that we give here is convenient for our goals.
\end{remark}

\begin{remark}
We will later extend the definition of $\cC^\alpha$ to negative exponents $\alpha \le 0$:
this will no longer be a space of functions, but rather of \emph{distributions}.
\end{remark}

\section{Test functions, distributions, and scaling}
\label{sec:bg-distributions}

We introduce the fundamental notions of test functions and distributions on $\R^d$.

\begin{definition}[Test functions]\label{def:test-functions}
We denote by $\cD := \cD(\R^d)$ the space of $C^\infty$ functions
$\varphi: \R^d \to \R$ with compact support, called \emph{test functions}.
We denote by $\cD(K)$ the 
subspace of functions in $\cD$ supported on a set $K \subseteq \R^d$.
\end{definition}

\begin{definition}[Distributions]
A linear functional $T:\cD(\R^d)\to\R$ is called a
\emph{distribution on $\R^d$} (or simply a distribution, or generalized function),
if for every compact set $K \subseteq \R^d$
there exist $r = r_K\in\N_0$ and $C = C_K < \infty$ such that
\begin{equation}\label{eq:order}
	|T(\varphi)|\leq C\|\varphi\|_{C^r}, \qquad \forall\, \varphi\in\cD(K).
\end{equation}
The space of distributions on $\R^d$ is denoted by $\cD' := \cD'(\R^d)$.

Given $K \subseteq \R^d$, any linear functional $T : \cD(K) \to \R$ 
which satisfies \eqref{eq:order}
for some $r\in\N$, $C < \infty$ is called a \emph{distribution on~$K$}.
Their space is denoted by $\cD'(K)$.
\end{definition}

\begin{remark}\label{rem:order}
When relation \eqref{eq:order} holds, we say that 
\emph{$T$ is a distribution of order $r$ on the set $K$}.
If one can choose $r$ independently of $K$,
we say that \emph{$T$ is a distribution of finite order $r$ on $\R^d$}
(the constant $C$ in \eqref{eq:order} is allowed to depend on $K$). 
Note that a distribution
of order $r$ on the set $K$ is also of order $r'\geq r$ on $K$.
\end{remark}

\begin{remark}\label{rem:dist-funct-meas}
Here are some basic examples of distributions.
\begin{itemize}
\item Any locally integrable function $f \in L^1_{\text{loc}}$
(hence \emph{any continuous function}) can be
canonically identified
with the distribution
$f(\varphi) := \int f(z) \, \varphi(z) \d z$.
\item More generally, 
any Borel measure $\mu$ on $\R^d$ which is finite on compact sets can be
identified with the distribution $\mu(\varphi) := \int \varphi \d \mu$.
\end{itemize}
Both $f(\varphi)$ and $\mu(\varphi)$ are distributions
of finite order $r=0$ on $\R^d$.
\end{remark}

\subsection*{Scaling}
We next introduce the key notion of \emph{scaling}.
Given a function $\varphi:\R^d \to \R$, we denote by
\emph{$\varphi_x^\lambda:\R^d \to \R$ the scaled version of $\varphi$
that is centered at $x$
and localised at scale $\lambda>0$}, defined as follows:
\begin{equation} \label{eq:scale}
	\varphi_x^\lambda(z) := \lambda^{-d} \varphi(\lambda^{-1} ( z - x)) \,.
\end{equation}
When $x = 0$ we write $\varphi^\lambda = \varphi^\lambda_0$,
when $\lambda = 1$ we write $\varphi_x = \varphi^1_x$.

Note that if $\varphi$ is supported in $B(0,1)$, 
then $\varphi_x^\lambda$ is supported in $B(x,\lambda)$.
The scaling factor $\lambda^{-d}$ in \eqref{eq:scale}
is chosen to preserve the integral:
\begin{equation*}
	\int \varphi_x^\lambda(z) \d z = \int \varphi(z) \d z \,, \qquad
	\| \varphi_x^\lambda \|_{L^1} = \| \varphi \|_{L^1} \,.
\end{equation*}

\textbf{We will use scaled functions $\varphi_x^\lambda$ extensively}.
The basic intuition is that
given a distribution $T \in \cD'$ and a test function $\varphi \in \cD$,
the map $\lambda\mapsto T(\varphi_x^\lambda)$ for small $\lambda > 0$
tells us something useful about the \emph{behavior of $T$ close to $x\in\R^d$}.

\begin{remark} \label{rem:order-r-d}
We can bound the $C^r$ norm of a scaled test function $\varphi_x^\lambda$
as follows:
\begin{equation} \label{eq:norm-scaling}
	\|\varphi_x^\lambda \|_{C^r} \le \lambda^{-d-r} \,
	\|\varphi \|_{C^r} \,,
\end{equation}
simply because $\|\partial^k \varphi_x^\lambda \|_\infty
= \lambda^{-|k|-d} \|\partial^k \varphi \|_\infty$,
see \eqref{eq:normCr} and \eqref{eq:scale}.

As a consequence, 
given a distribution $T \in \cD'$, a compact set $K\subseteq \R^d$
and a test function $\varphi \in \cD$, we have the following bound,
for a suitable $r\in\N$:
\begin{equation} \label{eq:bound-div}
	|T(\varphi_x^\lambda)| \lesssim \lambda^{-r-d}  \,,
\end{equation}
uniformly for $x \in K$ and $\lambda \in (0,1]$.
Indeed, it suffices to take $r = r_{\bar K_1}$ in \eqref{eq:order} for 
the compact set $\bar K_1$
(the $1$-enlargement of $K$, see \eqref{eq:enlargement})
and to apply \eqref{eq:norm-scaling}.
\end{remark}

{In some cases it can be useful to consider \emph{non-Euclidean} scalings
(like in the theory of regularity structures for applications to \emph{parabolic} SPDEs,
see \cite[Section~2]{Hairer2014d}).} Our approach could be easily adapted to
such scalings, but for simplicity of presentation we refrain from doing so in this survey.

\section{Germs of distributions and coherence}
\label{sec:germs}

The following definition is crucial to our approach.

\begin{definition}[Germs]\label{def:germs}
We call \emph{germ} a family $F = (F_x)_{x\in\R^d}$ of distributions
$F_x \in \cD'(\R^d)$ indexed by $x \in \R^d$,
or equivalently a map $F:\R^d\to\cD'(\R^d)$, such that for all $\psi\in\cD$ the map
$x\mapsto F_x(\psi)$ is measurable.
\end{definition}

We think of a germ $F = (F_x)_{x\in\R^d}$ as a collection of 
candidate local approximations for an unknown distribution.
More precisely, the problem is to find a distribution $f\in\cD'(\R^d)$ 
which in the proximity of any point $x\in\R^d$ is well-approximated by $F_x$,
in the sense that ``$f - F_x$ is small close to $x$''.
This can be made precise by requiring that for some given test function
$\varphi \in \cD$ with $\int \varphi \ne 0$ we have
\begin{equation} \label{eq:pre-coherent}
	\lim_{\lambda\downarrow 0} \, |(f-F_x)(\varphi_x^\lambda)| = 0
	\quad \text{uniformly for $x$ in compact sets} \,.
\end{equation}
Remarkably, this property is enough to guarantee \emph{uniqueness}.
The simple proof of the next result is given in \Cref{sec:conv} below.

\begin{lemma}[Uniqueness]\label{th:identi}
Given any germ $F = (F_x)_{x\in\R^d}$
and any test function $\varphi \in \cD$
with $\int \varphi \ne 0$,
there is at most one distribution $f \in \cD'$ which satisfies~\eqref{eq:pre-coherent}.

More precisely, given a compact set $K \subseteq \R^d$ and
two distributions $f_1, f_2 \in \cD'$ such that
$\lim_{\lambda\downarrow 0} \, |(f_i-F_x)(\varphi_x^\lambda)| = 0$
uniformly for $x \in K$, then $f_1$ and $f_2$
must ``coincide on $K$'', in the sense that
$f_1(\psi) = f_2(\psi)$ for any $\psi \in \cD(K)$.
\end{lemma}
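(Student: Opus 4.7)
My plan is to reduce the claim to a single statement about the difference $g := f_1 - f_2 \in \cD'$. By linearity and the triangle inequality, the two assumptions on $f_1, f_2$ combine to give
\[
\lim_{\lambda\downarrow 0} |g(\varphi_x^\lambda)| = 0 \quad \text{uniformly for } x \in K.
\]
So the uniqueness claim becomes: if $g \in \cD'$ satisfies the above with $\int \varphi \neq 0$, then $g(\psi) = 0$ for every $\psi \in \cD(K)$. Fix such a $\psi$, set $c := \int \varphi \neq 0$, and let $\check\varphi(u) := \varphi(-u)$.

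The core idea is to test $g$ against the convolution $\psi * \check\varphi^\lambda$. A direct change of variables shows
\[
(\psi * \check\varphi^\lambda)(z) = \int_{\R^d} \psi(x)\, \varphi_x^\lambda(z)\, \d x,
\]
so the map $\psi * \check\varphi^\lambda$ is a ``weighted superposition'' of the scaled bumps $\varphi_x^\lambda$. I would next verify the Fubini-type identity
\[
g\bigl(\psi * \check\varphi^\lambda\bigr) = \int_{\R^d} \psi(x)\, g(\varphi_x^\lambda)\, \d x,
\]
which is obtained by approximating the integral by Riemann sums: the map $x \mapsto \varphi_x^\lambda$ is continuous from $\R^d$ into $\cD(K')$ for the compact set $K' := \mathrm{supp}(\psi) + \lambda\cdot\mathrm{supp}(\varphi)$, and $g$ is continuous on $\cD(K')$, so the Riemann sums for the right-hand side converge to both sides.

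Finally I would take $\lambda \downarrow 0$. On the one hand, $\check\varphi^\lambda$ is a standard mollifier (up to the factor $c$), so $\psi * \check\varphi^\lambda \to c\,\psi$ in $\cD$ as $\lambda \downarrow 0$ — the supports stay in a common compact set and all derivatives converge uniformly — hence by continuity of the distribution $g$ we have $g(\psi * \check\varphi^\lambda) \to c\, g(\psi)$. On the other hand, since $\psi$ is bounded with support in $K$ and $g(\varphi_x^\lambda) \to 0$ uniformly for $x \in K$, the integral $\int \psi(x)\, g(\varphi_x^\lambda)\, \d x$ tends to $0$. Combining the two yields $c\, g(\psi) = 0$, and since $c \neq 0$ we conclude $g(\psi) = 0$, as desired.

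The only place where real care is needed is the Fubini identity, because $g$ is merely a distribution and not a measure; I expect this to be the sole technical point, handled by the Riemann-sum continuity argument sketched above. Everything else reduces to standard facts about mollification in $\cD$.
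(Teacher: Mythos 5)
Your argument is correct and is essentially the paper's own proof: set $T:=f_1-f_2$, write $T(\psi)=\lim_{\lambda\downarrow 0}T(\psi*\varphi^\lambda)$ by mollification and sequential continuity, exchange $T$ with the $x$-integral via the convolution identity (justified by Riemann sums), and conclude from the uniform vanishing of $T(\varphi_x^\lambda)$ for $x\in K$. The one slip is the reflected kernel: $(\psi*\check\varphi^\lambda)(z)=\int_{\R^d}\psi(x)\,\varphi_x^\lambda(z)\,\d x$ holds only when $\varphi$ is even, since in general $\check\varphi^\lambda(z-x)=\varphi^\lambda_z(x)\neq\varphi^\lambda_x(z)$; the superposition of the bumps $\varphi_x^\lambda$ weighted by $\psi$ is $\psi*\varphi^\lambda$ itself, which is exactly the paper's identity \eqref{eq:Tonconv}, so you should simply drop the reflection $\check\varphi$ (both kernels have integral $c\neq 0$, so the mollification step is unaffected) and the rest of your argument goes through verbatim.
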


\subsection*{Coherence}

Given a germ $F = (F_x)_{x\in\R^d}$, 
we now investigate the
\emph{existence} of a distribution $f\in\cD'$
which satisfies \eqref{eq:pre-coherent}.
The key to solving this problem
is the following condition, that we call \emph{coherence}.

\begin{definition}[Coherent germ]\label{def:coherent-germ}
Fix $\gamma \in \R$.
A germ $F = (F_x)_{x\in\R^d}$ is called
\emph{$\gamma$-coherent} if 
there is a test function $\varphi \in \cD$ with $\int \varphi \ne 0$ 
with the following property:
for any compact set $K \subseteq \R^d$
there is a real number $\alpha_K \le \min\{0, \gamma\}$ such that
\begin{equation}\label{eq:coherent}
\begin{gathered}
	| (F_z - F_y)(\varphi^\epsilon_y) | \lesssim
	\epsilon^{\alpha_K} \, (|z-y| + \epsilon)^{\gamma - {\alpha_K}} \\
	\text{uniformly for {$z,y \in K$}} \text{ and for } \epsilon \in (0,1] \,.
\end{gathered}
\end{equation}
If $\balpha = (\alpha_K)$ is the family of exponents in \eqref{eq:coherent},
we say that $F$ is \emph{$(\balpha,\gamma)$-coherent}.
If $\alpha_K = \alpha$ for every~$K$, we say that $F$ is \emph{$(\alpha,\gamma)$-coherent}.
\end{definition}

We can already state a preliminary version of the Reconstruction Theorem.

\begin{theorem}[Reconstruction Theorem, preliminary version]\label{th:reco}
Let $\gamma\in\R$ and $F = (F_x)_{x\in\R^d}$ a $\gamma$-coherent germ
as in \Cref{def:coherent-germ}. 
Then there exists a distribution $f=\cR F \in \cD'(\R^d)$ such that, for any given
test function $\xi \in\cD$, we have
\begin{equation}\label{eq:reco+0}
\begin{gathered}
	| ( f - F_x)(\xi^\lambda_x) |
	\lesssim \, \begin{cases}
	\lambda^\gamma & \text{if } \gamma \ne 0 \\
	1+|\log \lambda| & \text{if } \gamma = 0
	\end{cases} \\
	\text{uniformly for $x$ in compact sets and $\lambda \in (0,1]$} \,.
\end{gathered}
\end{equation}
If $\gamma > 0$, the distribution
$f$ is unique and we call it the \emph{reconstruction of $F$}.
\end{theorem}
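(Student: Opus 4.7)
The plan is to construct $f = \cR F$ as a limit of dyadic-scale mollifications of $F$, using coherence to bound the increments, and then to verify \eqref{eq:reco+0} by a separate multiscale comparison against the arbitrary test function $\xi$. Normalize the $\varphi$ from \Cref{def:coherent-germ} so that $\int\varphi = 1$ and, for $n \in \N_0$, introduce the locally bounded measurable function
\begin{equation*}
    f_n(y) := F_y\bigl(\varphi^{2^{-n}}_y\bigr),
    \qquad f_n(\psi) = \int \psi(y)\,F_y(\varphi^{2^{-n}}_y)\,dy,
\end{equation*}
which defines a distribution in view of \eqref{eq:bound-div}. The heart of the argument is a one-step increment bound
\begin{equation*}
    |f_{n+1}(\psi) - f_n(\psi)| \lesssim 2^{-n\gamma}\,\|\psi\|_{L^1}, \qquad \psi \in \cD(K).
\end{equation*}
To prove it I would invoke a reproducing identity expressing $\varphi^{2^{-n}}_y - \varphi^{2^{-n-1}}_y$ as an integral of translates $\varphi^{2^{-n-1}}_z$ over centers $z$ with $|z-y| \lesssim 2^{-n}$; after substitution and adding and subtracting $F_z(\varphi^{2^{-n-1}}_z)$ at each such center, the increment reduces to a bounded integral of quantities of the form $(F_z - F_y)(\varphi^{2^{-n-1}}_y)$, each of which is controlled by \eqref{eq:coherent} by $(2^{-n-1})^{\alpha_K}(2^{-n})^{\gamma-\alpha_K} \lesssim 2^{-n\gamma}$.

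With this estimate in hand, the construction of $\cR F$ splits into cases. For $\gamma > 0$ the telescoping series $f := f_0 + \sum_{n\ge 0}(f_{n+1}-f_n)$ converges in $\cD'$ by absolute summability, and uniqueness is immediate from \Cref{th:identi} once \eqref{eq:reco+0} is proved. For $\gamma = 0$ the partial sum grows only logarithmically in the scale, matching the $1 + |\log\lambda|$ factor in \eqref{eq:reco+0}. For $\gamma < 0$, uniqueness is not claimed, and any weak-$\ast$ accumulation point of $(f_n)$ in a sufficiently negative local Sobolev space (which exists by Banach--Alaoglu) can be taken as $\cR F$, with the increment control ensuring the target bound.

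To establish \eqref{eq:reco+0} for an \emph{arbitrary} test function $\xi \in \cD$, pick $n$ so that $2^{-n-1} < \lambda \le 2^{-n}$ and split
\begin{equation*}
    (f - F_x)(\xi^\lambda_x) = (f - f_n)(\xi^\lambda_x) + (f_n - F_x)(\xi^\lambda_x).
\end{equation*}
The first term is the tail of the telescoping series applied to $\xi^\lambda_x$; since $\|\xi^\lambda_x\|_{L^1} = \|\xi\|_{L^1}$ is independent of $\lambda$, this is bounded exactly by the right-hand side of \eqref{eq:reco+0}. The second equals $\int \xi^\lambda_x(y)(F_y - F_x)(\varphi^{2^{-n}}_y)\,dy$; since the support of $\xi^\lambda_x$ forces $|y - x| \lesssim \lambda \sim 2^{-n}$, coherence again yields a $\lambda^\gamma$ bound.

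The delicate step I expect to be the main obstacle is the production of a compactly supported reproducing identity using the \emph{specific} $\varphi$ provided by the coherence hypothesis, since for a generic $\varphi$ the natural Fourier-theoretic construction does not yield a useful kernel. A practical route is to first show that coherence with respect to one test function implies coherence with respect to any other (in particular a convenient B-spline or Gaussian for which the identity is transparent), and only then run the construction. A secondary subtlety is the case $\gamma \le 0$, where the naive summation of increments is no longer absolutely convergent and the selection of $\cR F$ has to be organized more carefully.
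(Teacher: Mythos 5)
Your plan has a genuine gap at its declared heart: the one-step increment bound $|f_{n+1}(\psi)-f_n(\psi)|\lesssim 2^{-n\gamma}\|\psi\|_{L^1}$ is false in general. After your reproducing-identity substitution and the insertion of $F_z(\varphi^{2^{-n-1}}_z)$, the increment does \emph{not} reduce only to coherence-controlled quantities $(F_z-F_y)(\varphi^{2^{-n-1}}_\cdot)$: it also contains the ``diagonal'' contribution $F_y(\varphi^{2^{-n-1}}_y)-F_z(\varphi^{2^{-n-1}}_z)$, which coherence does not see. This part is governed by the homogeneity bound of \Cref{th:boundor} (size $\epsilon^{\beta}$ with $\beta$ possibly much smaller than $\gamma$, even $\beta\le -d$) and becomes summable only after one gains a factor $\epsilon^{r}$ by testing against a kernel that annihilates polynomials up to degree $r-1$, at the price of $\|\psi\|_{C^r}$ rather than $\|\psi\|_{L^1}$; this is exactly the role of the tweaked function $\hat\varphi$ of \Cref{th:moments} and of the kernel $\check\varphi$ in the paper's terms $g_k'$. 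A concrete counterexample to your claimed bound is the constant germ $F_x\equiv T$ with $T$ of very low regularity: it is $\gamma$-coherent for every $\gamma$, yet the increments of your $f_n$ decay only at a rate dictated by the regularity of $T$ and the smoothness of $\psi$. The same omission breaks your verification of \eqref{eq:reco+0}: in the split $(f-f_n)(\xi^\lambda_x)+(f_n-F_x)(\xi^\lambda_x)$ the tail is not a sum of coherence-controlled increments, and even with the corrected increment bound the naive estimate $\|\xi^\lambda_x\|_{C^r}\sim\lambda^{-r-d}$ summed over scales $\le\lambda$ gives $\lambda^{\beta-d}$, not $\lambda^\gamma$. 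One must subtract $F_x$ \emph{inside} the telescoping (writing $F_x(\xi^\lambda_x)$ as a limit of its mollifications), so that the problematic part becomes $(F_y-F_x)(\hat\varphi^{\epsilon_k}_y)$ and is again coherence-controlled, and one must exploit the convolution structure to get the factor $\min\{\epsilon_k/\lambda,1\}^r$ without the $\lambda^{-d}$ loss; this is the content of \Cref{th:keyintest2} and \Cref{th:Ak}.

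Two further points. First, the compactly supported two-scale identity you need does not exist for a generic $\varphi$, and your proposed fix --- first proving that coherence with respect to one test function implies coherence with respect to any other --- is itself a nontrivial theorem (\Cref{th:coh} in the paper), proved there with the very same moment-killing and convolution machinery; so the main difficulty is deferred, not resolved, and even granted a mass-one kernel the gain is only one power of $\epsilon$, which is insufficient when $\beta<-1$. Second, for $\gamma<0$ the sequence $(f_n)$ is in general \emph{unbounded} (its coherence part grows like $2^{n|\gamma|}$), so Banach--Alaoglu provides no weak-$\ast$ accumulation point; the correct construction discards the divergent part of the telescoping and keeps only the absolutely convergent ``homogeneity'' series, as in the paper's treatment of $\gamma\le 0$.
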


The Reconstruction Theorem will be stated in full in \Cref{sec:reco} below,
with a strengthened version of relation \eqref{eq:reco+0}
which holds \emph{uniformly} over
a suitable class of test functions~$\xi$.
We first need to investigate the notion of coherence.

\begin{remark}The coherence condition \eqref{eq:coherent} is a
strong constraint on the germ. Indeed, we can equivalently rewrite this condition as follows
\begin{equation*}
\begin{split}
	| (F_z - F_y)(\varphi^\epsilon_y) | \lesssim
	\begin{cases}
	\epsilon^{\gamma} & \text{if }\ 0 \le |z-y| \le \epsilon \\
	\epsilon^{\alpha_K} \, |z-y|^{\gamma - \alpha_K} 
	& \text{if }\ |z-y| > \epsilon \,.
	\end{cases} 
\end{split}
\end{equation*}
In particular, as $|z-y|$
decreases from $1$ to $\epsilon$, the right hand side
\emph{improves} from $\epsilon^{\alpha_K}$ to $\epsilon^\gamma$, since $\alpha_K\leq\gamma$.
In the case $\alpha_K<0<\gamma$ this improvement is particularly dramatic, since, 
as $\epsilon \downarrow 0$, 
$\epsilon^{\alpha_K}$ \emph{diverges}  while
$\epsilon^\gamma$ \emph{vanishes}.
\end{remark}

\begin{remark}[Monotonicity of $\alpha_K$]\label{rem:monotalpha}
Without any real loss of generality,
\emph{we will always assume that the family of exponents $\balpha = (\alpha_K)$
in  \eqref{eq:coherent} is monotone}:
\begin{equation}\label{eq:monotone-alpha}
	\forall K \subseteq K': \qquad \alpha_K \ge \alpha_{K'} \,.
\end{equation}
This is natural, because the right hand side of \eqref{eq:coherent} 
is non-increasing in $\alpha_K$.
Indeed, starting from an arbitrary family $\balpha = (\alpha_K)$
for which \eqref{eq:coherent} holds, we can easily
build a \emph{monotone} family $\tilde\balpha = (\tilde\alpha_K)$
for which \eqref{eq:coherent} still holds, e.g.\ as follows:
\begin{itemize}
\item for balls $B(0,n)$ of radius $n\in\N$
we define $\tilde\alpha_{B(0,n)} := \min\{\alpha_{B(0,i)}: \ i=1,\ldots, n\}$;

\item for general compact sets $K$ we first define
$n_K := \min\{n\in\N: \ K \subseteq B(0,n)\}$ and then
$\tilde\alpha_K := \tilde\alpha_{B(0,n_K)}$.
\end{itemize}
\end{remark}

\begin{remark}[Vector space]\label{rem:germs-vector-space}
We stress that the coherence condition \eqref{eq:coherent}
is required to hold
\emph{for a single arbitrary test function $\varphi \in \cD$ with $\int \varphi \ne 0$}.
We will show in \Cref{th:coh} the non obvious fact that $\varphi$ in \eqref{eq:coherent} can be replaced
by \emph{any test function $\xi \in \cD$}, 
provided we also replace $\alpha_K$ by $\alpha'_K := \alpha_{\bar{K}_2}$,
where $\bar K_R$ denotes the $R$-enlargement of the set $K$,
see \eqref{eq:enlargement}.
It follows that, for any given $\gamma \in \R$,
\emph{the family of $\gamma$-coherent germs is a vector space}.
\end{remark}

\begin{remark}[Cutoffs]\label{rem:xyK}
In the coherence condition \eqref{eq:coherent}
we could require that the base points $z,y$ are at \emph{bounded distance}. Indeed, if 
\eqref{eq:coherent} holds when $|z-y| \le R$ for some fixed $R > 0$,
then the constraint $|z-y| \le R$ can be dropped and 
\eqref{eq:coherent} still holds (possibly with a different multiplicative constant).
Similarly, the constraint $\epsilon \in (0,1]$ 
can be replaced by $\epsilon \in (0, \eta]$, for any fixed $\eta > 0$.
The proof is left as an exercise.
\end{remark}

It is convenient to introduce a semi-norm
which quantifies the coherence of a germ. 
Fix a compact set $K \subseteq \R^d$, a test function $\varphi \in \cD$
and two real numbers $\alpha_K \le 0$, $\gamma \ge \alpha_K$.
Given an arbitrary germ $F = (F_x)_{x\in\R^d}$,
we denote by $\vertiii{F}^\co_{K, \varphi, \alpha_K,\gamma}$
the best (possibly infinite) constant 
for which the inequality \eqref{eq:coherent} holds
for $y,z\in K$ with $|z-y| \le 2$ (this
last restriction is immaterial, by \Cref{rem:xyK}):
\begin{equation} \label{eq:triple1}
	\vertiii{F}^\co_{K, \varphi, \alpha_K,\gamma} 
	:= \sup_{\substack{y,z\in K, \ |z-y|\le 2, \ \epsilon \in (0,1]}}
	\ \frac{| (F_z - F_y)(\varphi^\epsilon_y) |}
	{\epsilon^{\alpha_K} \, (|z-y| + \epsilon)^{\gamma - {\alpha_K}}} \,.
\end{equation}
Then, given $\gamma \in \R$ and $\balpha = (\alpha_K)$,
a germ $F$ is $(\balpha,\gamma)$-coherent if and only if 
for some $\varphi \in \cD$
with $\int \varphi \ne 0$ we have
$\vertiii{F}^\co_{K, \varphi, \alpha_K,\gamma} < \infty$
 for every compact set $K$.

\subsection*{Examples}

We now present a few concrete examples of germs.

\begin{example}[Constant germ]\label{ex:const}
Let us fix any distribution $T\in\cD'$ and set $F_x:=T$ for all $x\in\R^d$. Then 
$F = (F_x)_{x\in\R^d}$ is a $(\balpha,\gamma)$-coherent germ for any $(\balpha,\gamma)$, 
since $F_z-F_y=0$ for all $z,y\in\R^d$. Although this example may look trivial, 
it does occur in regularity structures, in particular for some notable elements
of negative homogeneity (see \Cref{th:boundor} below).
\end{example}

\begin{example}[A link with Regularity Structures]\label{ex:regstruc}
Let $\varphi \in \cD$ be a fixed test function with $\int\varphi \ne 0$.
Let $A\subset \R$ be a finite set and set $\alpha := \min A$.
Let $F = (F_x)_{x\in\R^d}$ be a germ such that,
for some $\gamma > \alpha$, we have
\begin{equation} \label{eq:cohRS}
\begin{gathered}
	| (F_z - F_y)(\varphi^\epsilon_y) | \lesssim \sum_{a \in A: \ a < \gamma}
	\epsilon^a \, |z-y|^{\gamma-a}  \\
	\text{uniformly for $z,y$ in compact sets and for $\epsilon \in (0,1]$} \,.
\end{gathered}
\end{equation}
Then the germ $F$ is $(\alpha,\gamma)$-coherent.
Indeed, it suffices to note that for $\alpha \le a < \gamma$
\begin{equation*}
	\epsilon^a \, |z-y|^{\gamma-a}  =
	\epsilon^\alpha \, ( \epsilon^{a-\alpha} \, |z-y|^{\gamma-a} )
	\le  \epsilon^\alpha \, (\epsilon + |z-y|)^{\gamma-\alpha}  \,,
\end{equation*}
simply because $v^\beta w^\delta \le (v+w)^{\beta+\delta}$ for any $v,w,\beta,\delta \ge 0$.

\emph{All germs which appear in Regularity Structures satisfy \eqref{eq:cohRS}.}
For readers who are familiar with this theory,
the precise link is the following: given a Regularity Structure $(A, \mathcal{T}, G)$, if
$(\Pi_x,\Gamma_{xy})_{x,y\in\R^d}$ is a model and $f \in \cD^\gamma$ 
is a modelled distribution, then
the germ $(F_x := \Pi_x f(x))_{x\in\R^d}$ satisfies~\eqref{eq:cohRS} since one can write
\[
\begin{split} 
(\Pi_z f(z)-\Pi_yf(y))(\varphi^\epsilon_y)&=-\Pi_y(f(y)-\Gamma_{yz}f(z))(\varphi^\epsilon_y)
 = \sum_{|\tau|<\gamma} g^\tau_{zy}\, \Pi_y\tau(\varphi^\epsilon_y)
\end{split}
\]
with $|\Pi_y\tau(\varphi^\epsilon_y)|\lesssim \epsilon^{|\tau|}$, which holds for all models, and
$|g^\tau_{zy}|\lesssim |z-y|^{\gamma-|\tau|}$, which holds for all modelled distributions.
\end{example}

\begin{example}[Taylor polynomials]\label{ex:taylor}
Let $\gamma>0$ and fix a function $f\in\cC^\gamma(\R^d)$. 
We recall that by \eqref{eq:holdergamma}
we have $\left| f(w)-F_y(w)\right|\lesssim |w-y|^\gamma$ for $w,y$ in compact sets, where
for all $y\in\R^d$ the function $F_y\in C^\infty(\R^d)$ given by
\[
F_y(w):=\sum_{|k|< \gamma} \partial^k f(y) \, \frac{(w-y)^k}{k!}, \qquad w\in\R^d\,,
\]
is the Taylor polynomial of $f$ centered at $y$ 
of order $\underline{r}(\gamma):=\max\{n\in\N_0:n<\gamma\}$ defined in \eqref{eq:polyF}. 
Let us now show that $F = (F_x)_{x\in\R^d}$ 
is a $(0,\gamma)$-coherent germ. 

Fix a compact set $K\subset\R^d$.
Note that for every $k\in\N_0^d$ such that $|k|< \gamma$ we have $\partial^k f\in\cC^{\gamma-|k|}$.
By Taylor expanding $\partial^k f(y)$ around $z$, we obtain
\[
\begin{split}
F_y(w)& = \sum_{|k|< \gamma} \left(\sum_{|\ell|< \gamma-|k|} \partial^{k+\ell} f(z) \, 
\frac{({y-z})^\ell}{\ell!}+R^k(y,z)\right)\frac{(w-y)^k}{k!}
\end{split}
\]
with $|R^k(y,z)|\lesssim |y-z|^{\gamma-|k|}$ uniformly for $y,z\in K$.
We change variable in the inner sum from $\ell$ to $k' := k+\ell$
and note that the constraint $|\ell|< \gamma-|k|$ becomes
$\{|k'| < \gamma\} \cap \{k' \ge k\}$, where $k'\ge k$ means
$k'_i \ge k_i \ \forall i=1,\ldots, d$.
If we interchange the two sums we then get, by the binomial theorem,
\begin{equation*}
\begin{split}
	F_y(w) & = \sum_{|k'| < \gamma}
	\partial^{k'} f(z) \left( \sum_{k \le k'} \frac{(y-z)^{k'-k}}{(k'-k)!}
	\, \frac{(w-y)^k}{k!} \right)
	+\sum_{|k|< \gamma}R^k(y,z)\,\frac{(w-y)^k}{k!}
	\\ & = F_z(w)+\sum_{|k|< \gamma}R^k(y,z)\,\frac{(w-y)^k}{k!}
	\,.
\end{split}
\end{equation*}
Therefore
\begin{equation}\label{eq:F-F}
F_z(w)-F_y(w)=-\sum_{|k|< \gamma}R^k(y,z)\,\frac{(w-y)^k}{k!}
\end{equation}
and since $|R^k(z,y)|\lesssim |y-z|^{\gamma-|k|}$ we get
\[
\begin{split}
|F_z(w)- F_y(w)| &\lesssim \sum_{|k|< \gamma} |w-y|^{|k|}\,|y-z|^{\gamma-|k|}\,.
\end{split}
\]
Therefore, for any $\varphi\in\cD$ we have, uniformly for $y,z\in K$,
\[
\left|\int_{\R^d} (F_z(w)-F_y(w))\, \varphi^\epsilon_y(w)\d w\right| \lesssim 
\sum_{n< \gamma} |z-y|^{\gamma-n}\,\epsilon^{n}\, .
\]
This is a particular case of the class studied in \Cref{ex:regstruc}, 
with $\alpha=0$ and $A=\{n\in\N_0: n<\gamma\}$, therefore the germ
$F$ is $(0,\gamma)$-coherent.
General germs are meant to be a generalisation of local Taylor expansions.
\end{example}

\subsection*{Homogeneity}
\label{sec:hom}

For a coherent germ $F = (F_x)_{x\in\R^d}$,
we can bound $|F_x(\varphi_x^\epsilon)|$ as $\epsilon \downarrow 0$.

\begin{lemma}[Homogeneity] \label{th:boundor}
Let $F = (F_x)_{x\in\R^d}$ be a $\gamma$-coherent germ.
For any compact set $K \subseteq \R^d$, there is a real number
$\beta_K < \gamma$ such that 
\begin{gather}
	\label{eq:bounded-order-germ}
	|F_x(\varphi^\epsilon_x)| \lesssim \, \epsilon^{\beta_K} 
	\quad \text{uniformly for $x \in K$ and $\epsilon \in (0,1]$} \,,
\end{gather}
with $\varphi$ as in \Cref{def:coherent-germ}. 
We say that $F$ has \emph{local homogeneity bounds
$\bbeta = (\beta_K)$}.
If $\beta_K = \beta$ for all $K$, we say that $F$ has 
\emph{global homogeneity bound $\beta$}.
\end{lemma}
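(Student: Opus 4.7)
The plan is to compare the germ at a variable base point $x$ with the value at a single fixed reference point $y_0 \in K$, exploiting that one distribution has finite order on compacts while coherence controls the deviation. Concretely, for any $x \in K$ and $\epsilon \in (0,1]$, I split
\begin{equation*}
F_x(\varphi^\epsilon_x) \;=\; F_{y_0}(\varphi^\epsilon_x) \;+\; (F_x - F_{y_0})(\varphi^\epsilon_x)
\end{equation*}
and then bound the two pieces separately, taking the worse of the two exponents.

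The second piece is directly controlled by coherence: applying \eqref{eq:coherent} with $z := y_0$ and $y := x$ yields $|(F_{y_0} - F_x)(\varphi^\epsilon_x)| \lesssim \epsilon^{\alpha_K}(|y_0 - x| + \epsilon)^{\gamma - \alpha_K}$. Since $|y_0 - x| \le \mathrm{diam}(K)$ and $\epsilon \le 1$, the factor $(|y_0-x|+\epsilon)^{\gamma-\alpha_K}$ is bounded by a constant depending only on $K$ and $\gamma - \alpha_K$, so
\begin{equation*}
|(F_x - F_{y_0})(\varphi^\epsilon_x)| \;\lesssim\; \epsilon^{\alpha_K}
\qquad \text{uniformly in } x \in K,\ \epsilon \in (0,1].
\end{equation*}

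For the first piece I exploit that $F_{y_0}$ is a \emph{single} distribution in $\cD'(\R^d)$, hence of some finite order on any given compact set. Choosing $R>0$ so that $\mathrm{supp}(\varphi) \subseteq B(0,R)$, the support of $\varphi^\epsilon_x$ lies in the enlargement $\bar K_R$ for every $x\in K$, $\epsilon\in(0,1]$; the argument of \Cref{rem:order-r-d} (using \eqref{eq:order} on $\bar K_R$ together with the scaling bound \eqref{eq:norm-scaling}) then gives $|F_{y_0}(\varphi^\epsilon_x)| \lesssim \epsilon^{-r-d}$ for a finite order $r$ depending on $F_{y_0}$ and $\bar K_R$, uniformly in $x$ and $\epsilon$.

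Combining the two bounds and using $\epsilon \in (0,1]$ to absorb the sum into a single power,
\begin{equation*}
|F_x(\varphi^\epsilon_x)| \;\lesssim\; \epsilon^{\alpha_K} + \epsilon^{-r-d} \;\lesssim\; \epsilon^{\min(\alpha_K,\, -r-d)},
\end{equation*}
so \eqref{eq:bounded-order-germ} holds with $\beta_K := \min(\alpha_K, -r-d)$. The only subtle point is enforcing the \emph{strict} inequality $\beta_K < \gamma$: we only know $\alpha_K \le \gamma$ from \Cref{def:coherent-germ}, so in the degenerate case $\beta_K = \gamma$ I simply replace $\beta_K$ by a slightly smaller value (e.g.\ $\gamma - 1$), which is harmless because $\epsilon^{\beta'} \ge \epsilon^{\beta}$ whenever $\beta' \le \beta$ and $\epsilon \in (0,1]$. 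I do not expect a real obstacle here: the whole argument is an elementary juxtaposition of coherence with the quantitative definition of a distribution.
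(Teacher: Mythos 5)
Your proposal is correct and follows essentially the same argument as the paper: fix a reference point in $K$, bound the difference $(F_x - F_{y_0})(\varphi^\epsilon_x)$ by coherence and the single distribution $F_{y_0}$ by its finite order on an enlargement of $K$ (via \Cref{rem:order-r-d}), then take $\beta_K = \min\{\alpha_K, -r-d\}$ and decrease it if needed to enforce $\beta_K < \gamma$. No gaps.
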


\noindent
The request $\beta_K < \gamma$ is to rule out trivialities.

\begin{proof}
Let $(F_x)_{x\in\R^d}$ be $\gamma$-coherent.
Given a compact set $K\subseteq \R^d$,
fix a point $z \in K$.
By \Cref{rem:order-r-d} applied to $T=F_{z}$, see \eqref{eq:bound-div},
there is $r \in \N_0$ such that
\begin{equation*} 
	|F_{z}(\varphi_x^\epsilon)| \lesssim \, \epsilon^{-r-d}
	\qquad \text{uniformly for $x \in K$ and $\epsilon \in (0,1]$} \,.
\end{equation*}
If we denote by $\mathrm{diam}(K) := \sup\{|x-z|: \ x,z \in K\}$,
by \eqref{eq:coherent} we can bound
\begin{equation*} 
	|(F_{x} - F_{z})(\varphi_x^\epsilon)| 
	\lesssim \epsilon^{\alpha_K} \, (|x-z|+\epsilon)^{\gamma-{\alpha_K}} 
	\le \epsilon^{\alpha_K} \, (\mathrm{diam}(K)+1)^{\gamma-{\alpha_K}} 
	\lesssim \epsilon^{\alpha_K} \,,
\end{equation*}
always uniformly for $x\in K$ and $\epsilon \in (0,1]$. This yields
\begin{equation*} 
	|F_{x}(\varphi_x^\epsilon)| \le
	|(F_{x} - F_{z})(\varphi_x^\epsilon)| 
	+ |F_{z}(\varphi_x^\epsilon)|
	\lesssim \epsilon^{\alpha_K} + \epsilon^{-r-d} \,,
\end{equation*}
hence \eqref{eq:bounded-order-germ} holds
with $\beta_K = \min\{{\alpha_K}, -r - d\}$ (which, of course, might not be the  best value of $\beta_K$).
By further decreasing $\beta_K$, if needed, we may ensure that $\beta_K < \gamma$.
\end{proof}

\begin{remark}[Monotonicity of $\beta_K$]\label{rem:monotbeta}
In analogy with \Cref{rem:monotalpha},
we will always assume that the homogeneity bounds $\bbeta = (\beta_K)$
in  \eqref{eq:bounded-order-germ} are \emph{monotone}:
\begin{equation}\label{eq:monotone-beta}
	\forall K \subseteq K': \qquad \beta_K \ge \beta_{K'} \,.
\end{equation}
Note that the right hand side of \eqref{eq:bounded-order-germ} is non-increasing in $\beta_K$.
\end{remark}

\begin{remark}[Vector space]\label{rem:possfin}
We will show in \Cref{pr:enhom} that in \eqref{eq:bounded-order-germ} we can replace
$\varphi$ by \emph{any test function $\xi\in\cD$},
provided we also replace $\beta_K$ by $\beta'_K := \beta_{\bar{K}_2}$.
As a consequence (recall also \Cref{rem:germs-vector-space}), for any given
$\alpha \le 0$ and $\gamma \ge \alpha$, \emph{the family of $(\alpha,\gamma)$-coherent germs
with global homogeneity bound $\beta$ is a vector space}.
\end{remark}

\begin{remark}[Positive homogeneity bounds]\label{rem:poshom}
In concrete applications, we typically have $\beta_K\le 0$ in \eqref{eq:bounded-order-germ},
because the case $\beta_K > 0$ is somewhat trivial.
Indeed, we recall that given a $\gamma$-coherent germ $F$,
our problem is to find a distribution $f\in\cD'$ that satisfies \eqref{eq:pre-coherent}.
If $\beta_K > 0$ for some compact set $K \subseteq \R^d$, then
$f = 0$ satisfies \eqref{eq:pre-coherent} on $K$ and, by \Cref{th:identi}, any solution $f$
of \eqref{eq:pre-coherent} must therefore vanish on $K$.
In particular, \emph{if $\beta_K > 0$ for all $K$, the only solution to \eqref{eq:pre-coherent}
is $f=0$.}
Using the notation of the Reconstruction Theorem, we can write $\cR F = 0$.
\end{remark}

\begin{example}\label{ex:homconst}
For a coherent germ there is in general
no fixed order between the lower bound $\beta_K$ of the homogeneity in \eqref{eq:bounded-order-germ} and the exponent $\alpha_K$ 
appearing in the coherence definition \eqref{eq:coherent}.
\begin{itemize}
\item In Regularity Structures, see \Cref{ex:regstruc},
it is usually assumed that $\beta_K=\alpha_K=\alpha$ for all $K$.

\item A constant germ $F_x=T$ with $T\in\cD'$, 
see \Cref{ex:const},
is $(\balpha,\gamma)$-coherent for any $\balpha$ and
$\gamma$. 
It is possible that $\beta_K < 0$, e.g.\ for the function $T(y) := |y|^{-1/2}$
we have $\beta_K = -\frac{1}{2}$ for $K=B(0,1)$.
Since we can choose $\alpha_K = 0$ here, we might have $\beta_K < \alpha_K$.
\item If $F$ is a $(\balpha,\gamma)$-coherent germ, then 
for any fixed distribution $f \in \cD'$ the germ
$G = (G_x:=f - F_x)_{x\in\R^d}$ is still $(\balpha,\gamma)$-coherent.
By the Reconstruction Theorem that we are about to state,
it is possible to choose $f = \cR F$ such that
for the germ $G$ we have that $\beta_K\geq\gamma$ (see \eqref{eq:reco+0} below),
hence $\beta_K \ge \alpha_K$.
\end{itemize}
\end{example}

\section{The Reconstruction Theorem}
\label{sec:reco}

We are ready to state the full version
of Hairer's \emph{Reconstruction Theorem}~\cite[Th.~3.10]{Hairer2014d} in our context
(see also \cite[Prop.~3.25]{Hairer2014d}).
Recalling the definition \eqref{eq:normCr} of $\|\cdot\|_{C^r}$, for
$r\in\N_0$ we define the following family of test functions:
\begin{equation}\label{eq:cBr}
	\cB_r := \{\psi \in \cD(B(0,1)): \ \|\psi\|_{C^r} \le 1\} \,.
\end{equation}
We also recall that $\bar K_R$ denotes the $R$-enlargement of the set $K$,
see \eqref{eq:enlargement}.

\begin{theorem}[Reconstruction Theorem]\label{th:reco+}
Let $\gamma\in\R$ and $F = (F_x)_{x\in\R^d}$ be a $(\balpha,\gamma)$-coherent germ
as in \Cref{def:coherent-germ} with local homogeneity bounds $\bbeta$, see \Cref{th:boundor}.
Then there exists a distribution $f \in \cD'(\R^d)$ such that
for any compact set $K\subset\R^d$ and any
integer $r >\max\{-\alpha_{\bar K_2}, -\beta_{\bar K_2}\}$ we have, for 
$\alpha := \alpha_{\bar{K}_2}$,
\begin{equation}
	\label{eq:reco+1}
\begin{gathered}
	| ( f - F_x)(\psi^\lambda_x) |
	\le \, 
	\mathfrak{c}_{\alpha, \gamma, r, d, \varphi} \,
	\vertiii{F}^\co_{\bar K_2, \varphi, \alpha ,\gamma} \cdot 
	\begin{cases}
	\lambda^\gamma & \text{if } \gamma \ne 0 \\
	\big(1+|\log \lambda|\big) & \text{if } \gamma = 0
	\end{cases} \\
	\text{uniformly } \text{for } \psi \in \cB_r, \ x \in K, \ \lambda \in (0,1] \,,
\end{gathered}
\end{equation}
where
the semi-norm $\vertiii{F}^\co_{\bar K_2, \varphi, \alpha ,\gamma}$ is
defined in \eqref{eq:triple1}, $\varphi$ is as in \Cref{def:coherent-germ}, and
$\mathfrak{c}_{\alpha, \gamma, r, d, \varphi}$ is
an explicit constant, see \eqref{eq:frakc}-\eqref{eq:frakc2}-\eqref{eq:frakc3}.

If $\gamma > 0$, such a distribution
$f = \cR F$ is unique and we call it the \emph{reconstruction of $F$}.
Moreover the map $F \mapsto \cR F$ is linear.

If $\gamma\leq 0$ the distribution $f$ is not unique
but, for any fixed $\alpha \le 0$ and $\gamma \ge \alpha$,
one can choose $f$ in such a way that the 
map $F\mapsto f= \cR F$ is linear on the vector space of
$(\alpha,\gamma)$-coherent germs
with global homogeneity bound $\beta$.
\end{theorem}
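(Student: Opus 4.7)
The plan is to construct $f$ as the $\cD'$-limit of a sequence of smooth approximations $f_n$, built from the germ at dyadic scales $2^{-n}$ using the distinguished test function $\varphi$ from \eqref{eq:coherent} as a mollifier; the bound \eqref{eq:reco+1} will then follow from a telescoping decomposition across scales in which the coherence estimate provides geometric decay.

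First I would normalize $\varphi$ so that $\int \varphi = 1$ (WLOG) and, for each $n \ge 0$, define a smooth approximation $f_n \in \cD'$ of the schematic form
\begin{equation*}
	f_n(\psi) \; = \; \int_{\R^d} F_y(\varphi^{2^{-n}}_y) \, (\psi \ast \check{\varphi}^{2^{-n}})(y) \, dy \,,
\end{equation*}
with $\check{\varphi}(z) := \varphi(-z)$, engineered so that the increment $f_{n+1} - f_n$ can be rewritten, after a change of variables, as an integral of coherence-type differences $(F_z - F_y)(\varphi^\epsilon_y)$ over pairs $(y, z)$ with $|z - y| \lesssim \epsilon := 2^{-n-1}$. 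Substituting \eqref{eq:coherent} should then yield a bound of the form $|(f_{n+1} - f_n)(\psi)| \lesssim \vertiii{F}^\co_{\bar K_2, \varphi, \alpha, \gamma} \, \|\psi\|_{C^r} \, 2^{-n\gamma}$, uniformly over $\psi$ supported in $K$. This is summable for $\gamma > 0$ and defines $f := \lim_n f_n$ in $\cD'$.

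To verify \eqref{eq:reco+1}, given $\psi \in \cB_r$, $x \in K$, and $\lambda \in (0,1]$, I would choose $n_0$ with $2^{-n_0} \asymp \lambda$ and split
\begin{equation*}
	(f - F_x)(\psi^\lambda_x) \;=\; \sum_{n \ge n_0} (f_{n+1} - f_n)(\psi^\lambda_x) \;+\; (f_{n_0} - F_x)(\psi^\lambda_x) \,.
\end{equation*}
The tail sum is dominated by $\sum_{n \ge n_0} 2^{-n\gamma} \cdot \|\psi^\lambda_x\|_{C^r}$, producing a geometric total of order $\lambda^\gamma$ for $\gamma > 0$ (and the factor $1 + |\log \lambda|$ for $\gamma = 0$, coming from summing a constant sequence over $\asymp |\log\lambda|$ dyadic scales). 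The second term is controlled by applying \eqref{eq:coherent} directly with $\epsilon \asymp \lambda$ on base points $y$ in the support of $\psi^\lambda_x$, together with the homogeneity bound \eqref{eq:bounded-order-germ}; the enlargement $\bar K_2$ enters here because the convolutions displace probes by up to distance $\sim 2$ from $K$. For $\gamma \le 0$ the direction of summation is reversed (summing from $n_0$ upward), and the hypothesis $r > \max\{-\alpha_{\bar K_2}, -\beta_{\bar K_2}\}$ is precisely what makes the resulting geometric series converge.

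Uniqueness for $\gamma > 0$ is immediate from \Cref{th:identi}, since \eqref{eq:reco+1} implies \eqref{eq:pre-coherent}. Linearity of $F \mapsto \cR F$ is built into the construction, as $F \mapsto f_n$ is linear for every $n$ and the limit preserves linearity. For $\gamma \le 0$ non-uniqueness is genuine (by \Cref{rem:poshom} one can freely add distributions supported on sets where the germ vanishes in the relevant sense), but on the vector space of $(\alpha, \gamma)$-coherent germs with fixed global homogeneity bound $\beta$ the same construction provides a canonical linear selection of $f$. The main obstacle I expect is engineering the smoothing in the definition of $f_n$ so that the increments $f_{n+1} - f_n$ collapse cleanly onto the coherence differences $(F_z - F_y)(\varphi^\epsilon_y)$ without uncontrolled cross-terms, and then tracking the dependence of constants explicitly on $r, d, \varphi$ to obtain the stated $\mathfrak{c}_{\alpha, \gamma, r, d, \varphi}$.
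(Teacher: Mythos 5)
Your overall architecture (dyadic mollification of the germ at scales $2^{-n}$, a telescoping sum over scales, and a split at $N$ with $2^{-N}\asymp\lambda$ to prove \eqref{eq:reco+1}) is the same as the paper's, but there is a genuine gap at exactly the point you defer as ``engineering the smoothing'': the increments $f_{n+1}-f_n$ do \emph{not} collapse onto coherence differences $(F_z-F_y)(\varphi^\epsilon_y)$. Writing out the difference for your $f_n$, besides a coherence part you are left with single--base--point terms of the schematic form $\int F_y(\varphi^{\epsilon_n}_y)\,\big(\psi*(\check\varphi^{\epsilon_{n+1}}-\check\varphi^{\epsilon_n})\big)(y)\d y$ (and a companion with $F_y(\varphi^{\epsilon_{n+1}}_y-\varphi^{\epsilon_n}_y)$), which coherence cannot control; the only available bound is the homogeneity estimate $|F_y(\varphi^\epsilon_y)|\lesssim\epsilon^{\beta}$ with $\beta$ typically negative. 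With your $\check\varphi(z)=\varphi(-z)$ the kernel difference $\check\varphi^{\epsilon_{n+1}}-\check\varphi^{\epsilon_n}$ annihilates only constants, so this term is merely $O(2^{-n(\beta+1)})$ and the series diverges whenever $\beta\le-1$. This is precisely why the paper first ``tweaks'' $\varphi$ into $\hat\varphi$ killing all monomials of degree $1,\dots,r-1$ (\Cref{th:moments}, \Cref{th:coherence-r}) and takes the mollifier $\rho=\hat\varphi^2*\hat\varphi$, so that $\rho^{\epsilon_{k+1}}-\rho^{\epsilon_k}=\hat\varphi^{\epsilon_k}*\check\varphi^{\epsilon_k}$ with $\check\varphi=\hat\varphi^{\frac12}-\hat\varphi^2$ annihilating all polynomials of degree $\le r-1$; then \Cref{th:keyintest} and \Cref{th:keyintest2} upgrade the single-point contribution to $O(2^{-n(\beta+r)})$, summable exactly because $r>-\beta_{\bar K_2}$ --- this is where that hypothesis really enters, not in the coherence sum. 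Without some such moment-annihilation device your $f_n$ does not converge, and the claimed increment bound $\lesssim\|\psi\|_{C^r}2^{-n\gamma}$ is not obtainable.

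The case $\gamma\le 0$ also needs a different idea than ``reversing the direction of summation'': for $\gamma<0$ the tail $\sum_{n\ge n_0}2^{-n\gamma}$ you invoke diverges. The paper's fix is to discard the divergent coherence part of the telescope altogether and define $f^K$ from $f_1$ plus only the moment-killed single-point terms $g'_k$ (which converge for every $\gamma$ thanks to the $\epsilon_k^{r}$ gain), and then --- because $r$, hence the construction, depends on $K$ --- to glue the local candidates by a partition of unity (Step~II of \Cref{sec:reco+neg}); linearity is read off from the explicit formula rather than from a limit. Relatedly, non-uniqueness for $\gamma<0$ is up to an arbitrary element of $\cC^{\gamma}$ (\Cref{rem:well-def}), not a matter of adding distributions ``supported on sets where the germ vanishes''. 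Your uniqueness and linearity arguments for $\gamma>0$ are fine, modulo the fact that the limit they rely on has not been shown to exist, and that a consistency/gluing step is still needed once the construction is made $K$-dependent by the tweaking.
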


The strategy of our proof of the Reconstruction Theorem is 
close in spirit to the original proof by Hairer:
given a germ $F$, we ``paste together''
the distributions $F_x$
on smaller and smaller scales, in order to build $\cR F$.
The existing proofs 
exploit
test functions possessing 
special multi-scale properties,
such as wavelets (by Hairer \cite{Hairer2014d}) or the heat kernel (by Otto-Weber \cite{ow19}).
Our proof is based on the 
\emph{single arbitrary test function $\varphi \in \cD$ with $\int \varphi \ne 0$}
which appears in the coherence condition \eqref{eq:coherent},
that we will suitably tweak in order to perform multi-scale arguments.

\begin{remark}\label{rem:RTsingle}
\Cref{th:reco} is a special case of \Cref{th:reco+}, because
equation \eqref{eq:reco+0} is a consequence of \eqref{eq:reco+1}.
This is obvious if $\xi \in \cB_r$, while for generic $\xi \in\cD$ it suffices
to note that $\psi := c \, \xi^{\eta} \in \cB_r$ for suitable $c, \eta > 0$, recall the notation \eqref{eq:scale}.
As a consequence, we can write $\xi_x^\lambda = c^{-1} \, \psi_x^{\eta^{-1} \lambda}$
with $\psi\in\cB_r$, hence \eqref{eq:reco+1} 
yields \eqref{eq:reco+0} for $\epsilon > 0$ small enough, which is enough (exercise).
\end{remark}

\begin{example}[Constant germ, reprise]
If we consider the constant germ $F_x=T$ of \Cref{ex:const} then for $f=T$ we have $f-F_x=0$ 
and therefore we can set $\cR F=T$. 
\end{example}

If we view a germ as a generalised local Taylor expansion, 
the Reconstruction Theorem associates to a \emph{coherent} germ
$F = (F_x)$ a \emph{global} distribution $f$ which is 
approximated by the germ $F_x$ locally around every $x\in\R^d$. If the
germ is a classical Taylor expansion of a function in $\cC^\gamma$,
as discussed in \Cref{ex:taylor}, then the Reconstruction
Theorem yields the function itself, as shown in the next example.

\begin{example}[Taylor polynomial, reprise]
Consider the germ given by the Taylor expansion of 
a function $f\in\cC^\gamma$, see \Cref{ex:taylor}. Then
by the Taylor theorem
\[
|f(y)-F_x(y)|\lesssim |y-x|^\gamma
\]
uniformly for $x,y$ in compact sets.
If $\psi$ is supported in $B(0,1)$, then $\psi^\lambda_x$ 
is supported in $B(x,\lambda)$, therefore
uniformly for $\lambda \in (0,1]$ we can bound
\begin{equation}\label{eq:TaylorRT}
	\left|\int_{\R^d} (f(y)-F_x(y))\, \psi^\lambda_x(y)\d y\right| \lesssim 
	\lambda^\gamma
	\, \int |\psi^\lambda_x(y)| \d y =  \lambda^\gamma \, \int |\psi(y)| \d y \,.
\end{equation}
This shows that $f$ satisfies \eqref{eq:pre-coherent}, therefore by uniqueness
we have $\cR F=f$. As a matter of fact, relation \eqref{eq:TaylorRT} holds uniformly for
$\psi \in \cB_0$ because 
$\int |\psi| \lesssim \|\psi\|_\infty \le 1$ (recall that $\psi \in \cB_0$ are supported in $B(0,1)$).
\end{example}

\begin{example}[On the case $\gamma = 0$]\label{ex:gamma=0}
If $F = (F_x)_{x\in\R^d}$ is a $(\alpha, 0)$-coherent germ, i.e.\ $\gamma = 0$,
the estimate \eqref{eq:reco+1} in the Reconstruction Theorem reads as follows:
\begin{equation}
	\label{eq:reco+1gamma0}
	| ( f - F_x)(\psi^\lambda_x) |
	\,\lesssim \, \log(1+\tfrac{1}{\lambda}) 
\end{equation}
uniformly for $x$ in compact sets, $\psi \in \cB_r$ and $\lambda \in (0,1]$.
We now show by an example
that \emph{the logarithmic rate in the right hand side of \eqref{eq:reco+1gamma0} is optimal}.

Consider the germ of functions $F = (F_x(y) := \log (1 + \frac{1}{|y-x|}))_{x\in\R^d}$.
If $\varphi \in \cD$ is a non-negative test function supported in $B(0,1)$ with
$\int \varphi > 0$, we can bound
\begin{equation*}
\begin{split}
	|(F_z  - F_y)(\varphi_y^\epsilon)| & \le
	|F_z(\varphi_y^\epsilon)| + |F_y(\varphi_y^\epsilon)| 
	\lesssim \log(1+\tfrac{1}{\epsilon})
	\lesssim \epsilon^{\alpha} \qquad \text{for any given $\alpha < 0$} \,.
\end{split}
\end{equation*}
This shows that the germ $F$ is $(\alpha,0)$-coherent,
hence by the Reconstruction Theorem there is $f\in\cD'$
such that \eqref{eq:reco+1gamma0} holds (e.g.\ $f \equiv 0$).
\emph{We claim that this bound cannot be improved, i.e.\ there is no $f \in \cD'$ such that
$| ( f - F_x)(\psi^\lambda_x) | \,\ll \, \log(1+\tfrac{1}{\lambda}) $.}

By contradiction, assume that such $f \in \cD'$ exists. Given a test function
$\psi \ge 0$ with $\psi(0) > 0$ and $\int\psi = 1$, we can bound 
$F_x(\psi^\lambda_x) \gtrsim \log(1+\tfrac{1}{\lambda})$ and by triangle inequality
\begin{equation*}
	 f(\psi^\lambda_x) \ge
	F_x(\psi^\lambda_x) - | ( F_x-f)(\psi^\lambda_x) |
	\gtrsim \log(1+\tfrac{1}{\lambda}) 
\end{equation*}
uniformly for $x$ in compact sets. In particular, there is a constant $c > 0$
such that
\begin{equation*}
	f(\psi^\lambda_x) \ge c \, \log(1+\tfrac{1}{\lambda})  \qquad
	\forall x \in B(0,2) \,.
\end{equation*}
This is impossible, for the following reason. Since 
$(\psi^\lambda)$ are mollifiers as $\lambda \downarrow 0$
(recall that we have fixed  $\int\psi = 1$), for any given test function $\xi \in \cD$ we can write
\begin{equation*}
	f(\xi) = \lim_{\lambda\downarrow 0} f(\xi * \psi^\lambda)
	= \lim_{\lambda\downarrow 0}  \int_{\R^d} f(\psi^\lambda_x)
	\, \xi(x) \d x \,.
\end{equation*}
If we fix $\xi \ge 0$ supported in $B(0,1)$ with $\int \xi = 1$, we finally get
\begin{equation*}
	f(\xi) \ge 
	\lim_{\lambda\downarrow 0}  \int_{\R^d} c \, \log(1+\tfrac{1}{\lambda}) \, \xi(x) \d x 
	= \lim_{\lambda\downarrow 0}  c \, \log(1+\tfrac{1}{\lambda}) = \infty
\end{equation*}
which is clearly a contradiction.
\end{example}

\begin{remark}
In the original formulation of the Reconstruction Theorem \cite[Thm. 3.10]{Hairer2014d}, 
the estimate in the right-hand side of \eqref{eq:reco+1} for $\gamma=0$ 
contains a factor $\lambda^\gamma$
instead of $(1+|\log\lambda|)$. This is not correct, as we showed in \Cref{ex:gamma=0}. The mistake 
in \cite{Hairer2014d} is in the very last display of 
the proof on page 324: in this formula we have $\|x-y\|_{\mathfrak s}\lesssim\delta+2^{-n}$ and $2^{-n}>\delta$,
so that the factor $\delta^{\gamma-\beta}$ in the left-hand side
must be replaced by $2^{-(\gamma-\beta)n}$. For
$\gamma<0$ the result does not change, but for $\gamma=0$ one obtains $1+|\log\delta|$ instead of $\delta^0$.
\end{remark}

\subsection{Guide to the proof of the Reconstruction Theorem}
\label{sec:guide}

The next sections are devoted to the proof of \Cref{th:reco+}. 

\begin{itemize}
\item In \Cref{sec:necessity} we show the necessity of coherence for the
Reconstruction Theorem.

\item In \Cref{sec:conv} we recall basic results
on test functions (such as convergence, convolutions and mollifiers) and
we prove \Cref{th:identi}.

\item In \Cref{sec:tweaking} we
show how to ``tweak'' an arbitrary test function, in order
to ensure that it \emph{annihilates all monomials up to a given degree}.
This is a key ingredient in the proof of the Reconstruction Theorem
because it will allow us to perform efficiently multi-scale arguments.

\item In \Cref{sec:basic-estimates-convolutions} we present
some elementary but crucial estimates on convolutions.

\item Finally, in \Cref{sec:reco+,sec:reco+neg} we give the proof of the Reconstruction
Theorem, first when $\gamma > 0$ and then when $\gamma \le 0$.
\end{itemize}

\section{Necessity of coherence}
\label{sec:necessity}

If a germ $F = (F_x)_{x\in\R^d}$ is $\gamma$-coherent, 
by the Reconstruction Theorem there is a distribution $f \in \cD'$ which is 
locally well approximated by $F$, see \eqref{eq:reco+1}.
In case $\gamma \ne 0$, this means the following:
\begin{equation} \label{eq:pre-coherent-nec}
	\begin{gathered}
	\text{$\forall$ compact set $K\subseteq \R^d$ $\exists\ r = r(K) \in \N$ such that} \\
	| ( f - F_x)(\psi^\lambda_x) | \lesssim \lambda^\gamma \\
	\text{uniformly for $x \in K$}, \ \lambda \in (0,1]
	\ \text{and} \ \psi \in \cB_r \,.
\end{gathered}
\end{equation}
Remarkably, 
coherence is also \emph{necessary} for \eqref{eq:pre-coherent-nec},
as we now show.

\begin{theorem}[Coherence is necessary]
If a germ $(F_x)_{x\in\R^d}$ 
satisfies \eqref{eq:pre-coherent-nec} for some $\gamma\in\R$,
then it is $\gamma$-coherent,
i.e.\ it satisfies the coherence condition \eqref{eq:coherent}, for any function $\varphi \in \cD$
and for a suitable family of exponents $\balpha = (\alpha_K)$.

If furthermore \eqref{eq:pre-coherent-nec} holds with $r(K) = r$ for every $K$,
for a fixed $r \in \N$,
then the germ $F$ is $(\alpha,\gamma)$-coherent for a suitable $\alpha \le 0$, i.e.\
we can take $\alpha_K = \alpha$ for all $K$.
\end{theorem}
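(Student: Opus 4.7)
The plan is to split $(F_z-F_y)(\varphi_y^\epsilon)$ by inserting $\pm f$ and then estimate each half directly from the hypothesis \eqref{eq:pre-coherent-nec}. The only subtlety is that \eqref{eq:pre-coherent-nec} is formulated for rescaled test functions $\psi^\lambda_x$ with $\psi\in\cB_r$ based at the point $x$ and at scale $\lambda\leq 1$, while $\varphi_y^\epsilon$ is based at $y$; recentering at $z$ will cost us a polynomial factor in $|z-y|/\epsilon$ that we must learn to tolerate. Fix a compact $K\subseteq\R^d$ and let $r:=r(\bar K_2)$ from \eqref{eq:pre-coherent-nec}; by \Cref{rem:xyK} I may assume that $|z-y|$ and $\epsilon$ are as small as I wish.

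The core step is to rewrite $\varphi_y^\epsilon$ in two ways as a multiple of a scaled $\cB_r$-function. Assume $\mathrm{supp}\,\varphi\subseteq B(0,M)$ with $M\geq 1$. A trivial rescaling gives $\varphi_y^\epsilon = C_1\,(\psi^{(1)})_y^{M\epsilon}$ with $\psi^{(1)}\in\cB_r$ and $C_1$ depending only on $\varphi,M,r$. The more delicate rewriting uses base point $z$: setting $\rho:=|z-y|+\epsilon$ and $\lambda:=(M+1)\rho$, one checks that $\varphi_y^\epsilon=C_2\,(\psi^{(2)})_z^{\lambda}$ with $\psi^{(2)}\in\cB_r$, where the chain rule applied to $\xi_z^\lambda=\varphi_y^\epsilon$ gives $\|\xi\|_{C^r}\lesssim(\lambda/\epsilon)^{d+r}\|\varphi\|_{C^r}$, hence $C_2\lesssim(\rho/\epsilon)^{d+r}$. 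The support of $\psi^{(2)}$ lies in $B(0,1)$ because the recentering offset has norm $|z-y|/\lambda\leq 1/(M+1)$ while the rescaled support has radius $M\epsilon/\lambda\leq M/(M+1)$, and these sum to exactly $1$.

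Applying \eqref{eq:pre-coherent-nec} to each piece (which is legitimate since $\lambda\leq 1$ for $|z-y|$ and $\epsilon$ small) yields
\[
|(f-F_y)(\varphi_y^\epsilon)|\lesssim\epsilon^\gamma,\qquad
|(f-F_z)(\varphi_y^\epsilon)|\lesssim\epsilon^{-(d+r)}\,(|z-y|+\epsilon)^{\gamma+d+r}.
\]
Setting $\alpha_K:=-(d+r)$ (which I may ensure is $\leq\min\{0,\gamma\}$ by enlarging $r$, since \eqref{eq:pre-coherent-nec} only gets weaker for larger $r$), the second bound is exactly $\epsilon^{\alpha_K}(|z-y|+\epsilon)^{\gamma-\alpha_K}$, and the first satisfies the same bound because $\epsilon\leq|z-y|+\epsilon$ and $\gamma-\alpha_K\geq 0$. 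Summing gives \eqref{eq:coherent}, so the germ is $(\balpha,\gamma)$-coherent with $\balpha=(\alpha_K)$; monotonicity of $\alpha_K$ in $K$ follows from monotonicity of $r(K)$ as in \Cref{rem:monotalpha}. Since the argument did not use $\int\varphi\neq 0$, it works for every $\varphi\in\cD$. If $r(K)=r$ is uniform, then so is $\alpha_K=-(d+r)$, yielding the second claim.

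The main obstacle is the recentering step: one must check simultaneously the support constraint on $\psi^{(2)}$ and the precise $C^r$-inflation $(\rho/\epsilon)^{d+r}$ produced by the chain rule, and then recognise this inflation as exactly the slack $\epsilon^{\alpha_K}(|z-y|+\epsilon)^{-\alpha_K}$ that the coherence inequality is designed to accommodate. The rest, including using \Cref{rem:xyK} to dispose of the $\lambda\leq 1$ requirement and raising $r$ to meet $\alpha_K\leq\min\{0,\gamma\}$, is routine.
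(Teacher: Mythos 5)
Your proposal is correct and follows essentially the same route as the paper's Proposition~\ref{th:neces}: split $(F_z-F_y)(\varphi_y^\epsilon)$ by inserting $\pm f$, estimate the $y$-centered term directly, and recenter $\varphi_y^\epsilon$ at $z$ at scale comparable to $|z-y|+\epsilon$, paying a $C^r$-norm inflation of order $((|z-y|+\epsilon)/\epsilon)^{d+r}$ which is absorbed by choosing $\alpha_K$ of the form $-(d+r)$ (the paper takes $\min\{-r-d,\gamma\}$ rather than enlarging $r$, but this is immaterial). Your support and scaling checks match the paper's computation $\psi^\lambda_y=\xi^{\lambda_1}_z$ with $\xi=\psi^{\lambda_2}_w$, $|w|+\lambda_2\le 1$.
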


This is a direct corollary of the next quantitative result.

\begin{proposition}\label{th:neces}
Let $(F_x)_{x\in\R^d}$ be a germ with the following property:
there exist a distribution $f \in \cD'$, numbers 
$\gamma \in \R$, $r\in\N$, $C  < \infty$ and a set $K \subseteq \R^d$ such that
\begin{equation} \label{eq:saty}
\begin{gathered}
	| ( f - F_x)(\psi^\lambda_x) |
	\le C \, \lambda^\gamma \\
	\text{for all \ $x \in K$, \ $\lambda \in (0,1]$ \ \text{and} \ $\psi \in \cB_r$}  \, ,
\end{gathered}
\end{equation}
with $\cB_r$ defined in \eqref{eq:cBr}.
Then for $\alpha := \min\{-r-d, \gamma\}$ we have
\begin{equation}\label{eq:pre-coh}
\begin{gathered}
	|(F_z - F_y)(\psi^\lambda_y)| \,\le\, 2C \, \lambda^\alpha \,
	(|z-y|+\lambda)^{\gamma-\alpha} \\
	\rule{0pt}{1.1em}\text{for all $y,z \in K$ with $|z-y| \le \tfrac{1}{2}$, 
	\ $\lambda \in (0,\tfrac{1}{2}]$ \ \text{and} \ $\psi \in \cB_r$}  \, .
\end{gathered}
\end{equation}
\end{proposition}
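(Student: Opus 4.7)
The strategy is simply the triangle inequality followed by a rescaling trick. Write
\begin{equation*}
	(F_z - F_y)(\psi_y^\lambda) = (f - F_y)(\psi_y^\lambda) - (f - F_z)(\psi_y^\lambda) \,.
\end{equation*}
The first term is controlled directly by hypothesis \eqref{eq:saty} applied at the base point $x=y$, which gives $|(f-F_y)(\psi_y^\lambda)| \le C \lambda^\gamma$. For the second term I would re-express $\psi_y^\lambda$ as a test function rescaled and recentered at $z$. Since $\psi$ is supported in $B(0,1)$, the function $\psi_y^\lambda$ is supported in $B(y,\lambda) \subseteq B(z,\mu)$ where $\mu := \lambda + |z-y|$. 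Explicitly, $\psi_y^\lambda = c \cdot \tilde\psi_z^\mu$, with
\begin{equation*}
	\tilde\psi(u) := c^{-1} \, (\mu/\lambda)^d \, \psi\!\left( \lambda^{-1}(z-y) + (\mu/\lambda)\, u \right) ,
\end{equation*}
and I would choose the normalizing constant $c := (\mu/\lambda)^{d+r}$ precisely so that $\tilde\psi \in \cB_r$: indeed $\tilde\psi$ is supported in $B(0,1)$, and $\|\partial^k \tilde\psi\|_\infty = c^{-1}(\mu/\lambda)^{d+|k|}\|\partial^k \psi\|_\infty \le 1$ for every $|k|\le r$.

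Since $\mu \le 1$ under the assumption $|z-y| \le \tfrac12$ and $\lambda \le \tfrac12$, I can apply \eqref{eq:saty} with $x = z$ to the test function $\tilde\psi \in \cB_r$ at scale $\mu$, obtaining
\begin{equation*}
	|(f-F_z)(\psi_y^\lambda)| = c \cdot |(f-F_z)(\tilde\psi_z^\mu)| \le C \, (\mu/\lambda)^{d+r} \, \mu^\gamma = C \, \lambda^{-(d+r)} \, \mu^{\gamma + d + r} \,.
\end{equation*}

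It remains to verify that both estimates are bounded by $C\lambda^\alpha (|z-y|+\lambda)^{\gamma-\alpha} = C \lambda^\alpha \mu^{\gamma-\alpha}$ with $\alpha = \min\{-r-d,\gamma\}$. Since $\alpha \le \gamma$ and $\lambda \le \mu$, we have $\lambda^\gamma = \lambda^\alpha \lambda^{\gamma-\alpha} \le \lambda^\alpha \mu^{\gamma-\alpha}$, handling the first term. For the second term, if $\alpha = -r-d$ the identity $\lambda^{-(d+r)} \mu^{\gamma+d+r} = \lambda^\alpha \mu^{\gamma-\alpha}$ is immediate; if instead $\alpha = \gamma$ (so $\gamma + d + r \le 0$), monotonicity gives $\mu^{\gamma+d+r} \le \lambda^{\gamma+d+r}$ and hence $\lambda^{-(d+r)}\mu^{\gamma+d+r} \le \lambda^\gamma = \lambda^\alpha\mu^{\gamma-\alpha}$. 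Summing the two contributions yields the factor~$2C$ in \eqref{eq:pre-coh}.

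The only non-routine step is the rescaling of $\psi_y^\lambda$ into a function in $\cB_r$ centered at $z$: keeping track of the normalization $c=(\mu/\lambda)^{d+r}$ is what forces the exponent $-r-d$ to appear in the definition of $\alpha$. Everything else is bookkeeping with the two cases $\gamma \ge -r-d$ and $\gamma < -r-d$.
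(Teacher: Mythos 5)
Your argument is correct and is essentially the paper's own proof: the same triangle-inequality split, followed by rewriting $\psi_y^\lambda$ as a rescaled test function centered at $z$ at scale $\mu=|z-y|+\lambda$, with the $C^r$-normalization factor $(\mu/\lambda)^{r+d}$ producing the exponent $-r-d$ in $\alpha$, and the same two-case bookkeeping at the end. Only the parametrization differs cosmetically (the paper writes $\psi_y^\lambda=\xi_z^{\lambda_1}$ with $\xi=\psi_w^{\lambda_2}$ and bounds $\|\xi\|_{C^r}\le\lambda_2^{-r-d}$, which is exactly your constant $c$).
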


\begin{proof}
For $y,z\in K$, $\lambda \in (0,1]$ and $\psi \in \cB_r$. By \eqref{eq:saty} we can estimate
\begin{equation*}
\begin{split}
	|(F_z - F_y)(\psi^\lambda_y)|
	& = |(f - F_y)(\psi^\lambda_y) - (f - F_z)(\psi^\lambda_y)| \\
	& \le |(f - F_y)(\psi^\lambda_y)| + |(f - F_z)(\psi^\lambda_y)| \\
	& \le C \, \lambda^\gamma + |(f - F_z)(\psi^\lambda_y)| \,.
\end{split}
\end{equation*}
We claim that for $|z-y| \le \frac{1}{2}$ and $\lambda \in (0, \frac{1}{2}]$ we can bound
\begin{equation}\label{eq:eccoqua}
	|(f - F_z)(\psi^\lambda_y)| \le C \, \big( \tfrac{\lambda}{|z-y|+\lambda} \big)^{-r-d}
	\, (|z-y|+\lambda)^\gamma  \,.
\end{equation}
Note that for any $\alpha \le \gamma$ we can estimate
$\lambda^\gamma = \lambda^\alpha \, \lambda^{\gamma-\alpha}
	\le \lambda^\alpha \, (|z-y|+\lambda)^{\gamma-\alpha}$,
therefore if we set $\alpha := \min\{-r-d, \gamma\}$ we obtain \eqref{eq:pre-coh}.

It remains to prove \eqref{eq:eccoqua}.
Estimating $|(f - F_z)(\psi^\lambda_y) |$
is non obvious because $\psi^\lambda_y$ is centered at $y$ rather
than $z$. However, we claim that we can write
\begin{equation} \label{eq:claimphi}
	\psi^\lambda_y = \xi^{\lambda_1}_z \qquad
	\text{where} \qquad
	\xi := \psi^{\lambda_2}_w \,,
\end{equation}
where $\lambda_1, \lambda_2 \in (0,1]$ and $w \in B(0,1)$ are defined as follows:
\begin{equation*}
	\lambda_1 := |z-y|+\lambda \,, \qquad
	\lambda_2 := \tfrac{\lambda}{|z-y|+\lambda} \,, \qquad
	w := \tfrac{y-z}{|z-y|+\lambda} \,.
\end{equation*}
To prove \eqref{eq:claimphi}, recall that
$\xi^{\lambda_1}_z (x)
	= \lambda_1^{-d} \, \xi (
	\lambda_1^{-1}(x-z))$, hence for $\xi = \psi^{\lambda_2}_w$ we get
\begin{equation*}
\begin{split}
	\xi^{\lambda_1}_z (x)
	& = \lambda_1^{-d} \, \psi^{\lambda_2}_w (
	\lambda_1^{-1}(x-z)) = \lambda_1^{-d} \,
	\lambda_2^{-d} \, \psi ( \lambda_2^{-1} \{
	\lambda_1^{-1}(x-z) - w\}) \\
	& = (\lambda_1 \lambda_2)^{-d} \,
	\psi ( (\lambda_1 \lambda_2)^{-1} \{
	(x-z) - \lambda_1 w\}) = \lambda^{-d} \,
	\psi ( \lambda^{-1} \{
	x-y\}) = \psi^\lambda_y(x) \,.
\end{split}
\end{equation*}
Note that $\xi = \psi_w^{\lambda_2}$ is supported in $B(w, \lambda_2)
\subseteq B(0,1)$, because $|w| + \lambda_2 \le 1$
and $\psi$ is supported in $B(0,1)$.
Since $\xi$ is supported in $B(0,1)$, we have $\xi / \| \xi \|_{C^r} \in \cB_r$,
hence we can apply equation \eqref{eq:saty} 
with the replacements
\begin{equation*}
	x \ \rightsquigarrow \ z \,, \qquad
	\psi \ \rightsquigarrow \  \xi / \| \xi \|_{C^r} \,, \qquad
	\lambda \ \rightsquigarrow \ \lambda_1 
\end{equation*}
(note that $\lambda_1 \in (0,1]$ if $|z-y| \le \frac{1}{2}$ and $\lambda \in (0, \frac{1}{2}]$).
This yields
\begin{equation} \label{eq:ass-spec}
	| ( f - F_z)(\xi^{\lambda_1}_z) |
	\le \, C \, (\lambda_1)^\gamma \, \|\xi\|_{C^r}  \,.
\end{equation}
It remains to bound
\begin{equation*}
	\|\xi\|_{C^r} =
	\|\psi^{\lambda_2}_w\|_{C^r}
	= \max_{|k|\le r} \| \partial^k \psi^{\lambda_2}_w \|_\infty
	= \max_{|k|\le r} \| 
	\lambda_2^{-|k|-d} \, \partial^k \psi \|_\infty
	\le \lambda_2^{-r-d} \,,
\end{equation*}
because $\max_{|k|\le r} \|\partial^k \psi \|_\infty
= \|\psi\|_{C^r} \le 1$ for $\psi\in\cB_r$.
By \eqref{eq:claimphi} and \eqref{eq:ass-spec}, we get \eqref{eq:eccoqua}.
\end{proof}

\section{Convergence of test functions, convolutions\\ and mollifiers}
\label{sec:conv}

The space of test functions $\cD$ is equipped with a strong notion of convergence.

\begin{definition}[Convergence of test functions]
We say that
$\varphi_n \to \varphi$ in $\cD$ if and only if the following two conditions hold:
\begin{enumerate}
\item all $\varphi_n$'s are supported in some fixed compact set $K$,
i.e.\ $\varphi_n \in \cD(K) \ \forall n$;
\item $\varphi_n$ converges to $\varphi$ uniformly with all derivatives:
\begin{equation*}
	\forall r \in \N_0: \qquad
	\|\varphi_n - \varphi\|_{C^r} \to 0 \,.
\end{equation*}
\end{enumerate}
We typically consider sequences indexed by $n\in\N$, with convergence as $n\to\infty$,
or continuous families indexed by $n = \lambda \in (0,1]$, 
with convergence as $\lambda \downarrow 0$.
\end{definition}

\begin{remark} \label{rem:topology}
This notion of convergence is induced by a natural topology on $\cD$, 
called \emph{locally convex inductive limit topology}.
It is quite subtle -- non metrizable, not even first countable --
but we will not need to use it directly.
\end{remark}

We now show that the ``continuity property'' \eqref{eq:order}
in the definition of a distribution corresponds to ``sequential continuity''
with respect to convergence in $\cD$.\footnote{If a map $T: \cD \to \R$
is sequentially continuous, i.e.\ it satisfies \eqref{eq:sequentially},
this does \emph{not} imply that $T$ is a continuous map,
because the topology on $\cD$ is not first countable (recall \Cref{rem:topology}).
However, if $T$ is a \emph{linear map}, then sequential continuity implies continuity.}

\begin{lemma}\label{lem:sequential}
A linear functional $T:\cD(\R^d)\to\R$ is
a distribution if and only if
\begin{equation} \label{eq:sequentially}
	\varphi_n\to\varphi \ \text{ in }\ \cD \qquad \text{implies} \qquad
	T(\varphi_n) \to T(\varphi) \,.
\end{equation}
\end{lemma}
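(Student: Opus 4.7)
My plan is to prove the two implications separately. The forward direction ($T$ distribution $\Rightarrow$ sequential continuity) is immediate from the estimate \eqref{eq:order}, while the reverse direction requires a contradiction argument based on normalization.

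For the easy direction, suppose $T$ satisfies \eqref{eq:order} and $\varphi_n \to \varphi$ in $\cD$. By definition of $\cD$-convergence there is a compact $K$ with $\varphi_n, \varphi \in \cD(K)$ for every $n$, hence also $\varphi_n - \varphi \in \cD(K)$. Applying \eqref{eq:order} with the constants $r = r_K$ and $C = C_K$ associated to $K$, I would get
\begin{equation*}
	|T(\varphi_n) - T(\varphi)| = |T(\varphi_n - \varphi)| \le C_K \, \|\varphi_n - \varphi\|_{C^{r_K}} \xrightarrow[n\to\infty]{} 0,
\end{equation*}
where I used linearity of $T$ and the fact that convergence in $\cD$ entails $\|\varphi_n-\varphi\|_{C^r} \to 0$ for every $r\in\N_0$.

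For the reverse direction, I would argue by contraposition. Assume $T$ is linear and \emph{not} a distribution: there exists a compact $K \subseteq \R^d$ such that for every $r \in \N_0$ and every constant $C < \infty$ the bound \eqref{eq:order} fails on $\cD(K)$. Specializing to $r = C = n$ produces, for each $n\in\N$, a test function $\varphi_n \in \cD(K)$ with
\begin{equation*}
	|T(\varphi_n)| > n \, \|\varphi_n\|_{C^n} \,,
\end{equation*}
which in particular forces $T(\varphi_n) \ne 0$. The key trick is then to \emph{renormalize}: set $\psi_n := \varphi_n / |T(\varphi_n)|$, so that $|T(\psi_n)| = 1$ for every $n$, while at the same time $\|\psi_n\|_{C^n} < 1/n$. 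Since $\|\cdot\|_{C^r}$ is monotone in $r$, for any fixed $r \in \N_0$ and all $n \ge r$ we get $\|\psi_n\|_{C^r} \le \|\psi_n\|_{C^n} < 1/n \to 0$. Hence $\psi_n \to 0$ in $\cD$ (all the $\psi_n$ live in $\cD(K)$), but $T(\psi_n) \not\to 0$, contradicting \eqref{eq:sequentially}.

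The main conceptual obstacle is the reverse direction, specifically identifying the right normalization that simultaneously forces $\psi_n \to 0$ in $\cD$ (which requires \emph{all} seminorms $\|\cdot\|_{C^r}$ to vanish in the limit) while keeping $|T(\psi_n)|$ uniformly bounded away from zero. The diagonal choice $r = C = n$ in the failure of \eqref{eq:order} is exactly what synchronizes these two demands. No further analytical input is needed beyond linearity of $T$ and the monotonicity $\|\cdot\|_{C^r} \le \|\cdot\|_{C^n}$ for $r \le n$.
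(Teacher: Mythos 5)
Your proof is correct and follows essentially the same route as the paper: the easy direction via the bound \eqref{eq:order} applied to $\varphi_n-\varphi$, and the converse by the same diagonal choice $r=C=n$ of counterexamples $\varphi_n\in\cD(K)$ with $|T(\varphi_n)|>n\|\varphi_n\|_{C^n}$, then renormalizing so that the sequence tends to $0$ in $\cD$ while $|T(\psi_n)|$ stays bounded away from zero. The only difference is cosmetic: you normalize by $|T(\varphi_n)|$ (which is legitimately nonzero by the strict inequality), whereas the paper uses $\psi_n:=n^{-1}\varphi_n/\|\varphi_n\|_{C^n}$; both yield the same contradiction.
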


\begin{proof}
By the definition of convergence in $\cD$,
it is clear that \eqref{eq:order} implies \eqref{eq:sequentially}. Vice versa,
if \eqref{eq:order} fails for some compact $K$, then
for every $r = n \in \N$ and $C = n \in \N$ we can find $\varphi_n \in \cD(K)$
such that $|T(\varphi_n)| > n \|\varphi_n\|_{C^n}$;
if we define $\psi_n := n^{-1} \varphi_n / \|\varphi_n\|_{C^n}$,
we have $|T(\psi_n)| > 1$ for every $n\in\N$, which contradicts
\eqref{eq:sequentially} because $\psi_n \to 0$ in $\cD$ (indeed,
for any fixed $r\in\N$ we have $\|\psi_n\|_{C^r}
\le n^{-1}$ as soon as $n \ge r$).
\end{proof}

We recall that the convolution of two
measurable functions $f, g: \R^d \to \R$ is
the function $f*g = g*f: \R^d \to \R$ defined by
\begin{equation} \label{eq:f*g}
	(f * g)(x) := \int_{\R^d} f(x-y) \, g(y) \d y
	= \int_{\R^d} f(z) \, g(x-z) \d z \,,
\end{equation}
provided the integral makes sense for almost every $x\in\R^d$. 
This holds, in particular, when
$f = \varphi \in \cD$ is a test function and $g$
is \emph{locally integrable and compactly supported}: in this case the convolution
$\varphi * g \in \cD$ is a test function too, and we have
\begin{equation} \label{eq:der-conv}
	\partial^k(\varphi * g) = (\partial^k \varphi) * g \,.
\end{equation}
Given any distribution $T \in \cD'$, we can compute
\begin{equation*}
	T(\varphi * g) = \int_{\R^d} T(\varphi(\cdot-y)) \, g(y) \d y \,,
\end{equation*}
as one can deduce from \eqref{eq:f*g} (e.g.\ by linearity and Riemann sum approximations).
If we set $\varphi_y(x) : = \varphi(x-y) = \varphi_y^1(x)$, recall
\eqref{eq:scale}, we obtain the basic formula
\begin{equation}\label{eq:Tonconv}
	T(\varphi * g) = \int_{\R^d} T(\varphi_y) \, g(y) \d y \,,
\end{equation}
that will be used repeatedly in the sequel.

We next state a classical result that will be used frequently.

\begin{lemma}[Mollifiers]\label{th:molli}
Let $\rho: \R^d \to \R$, with $\int \rho = 1$ be compactly supported and integrable.
Then $\rho^\epsilon(z) := \rho_0^\epsilon(z) := \epsilon^{-d} \rho(\epsilon^{-1}z)$
are mollifiers as $\epsilon \downarrow 0$, i.e.\
\begin{equation*}
	\forall \varphi \in \cD: \qquad
	\varphi * \rho^\epsilon \to \varphi \quad \text{in $\cD$
	as $\epsilon \downarrow 0$} \,.
\end{equation*}
\end{lemma}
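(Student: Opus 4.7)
The plan is to verify the two defining conditions for convergence in $\cD$ separately: uniform containment of the supports in a fixed compact set, and uniform convergence of all derivatives.

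For the support condition, I would first fix a compact set $K \subseteq \R^d$ containing the support of $\varphi$ and a radius $R > 0$ such that $\rho$ is supported in $B(0,R)$. Then $\rho^\epsilon$ is supported in $B(0,\epsilon R)$ by the definition \eqref{eq:scale}, so from the integral representation \eqref{eq:f*g} one sees that $\varphi * \rho^\epsilon$ is supported in $K + B(0, \epsilon R) \subseteq \bar K_R$ for all $\epsilon \in (0,1]$, where $\bar K_R$ is the enlargement defined in \eqref{eq:enlargement}. This provides the common compact set required by item (1) of the definition of convergence in $\cD$.

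For the convergence of derivatives, I would use the identity $\partial^k(\varphi * \rho^\epsilon) = (\partial^k \varphi) * \rho^\epsilon$ from \eqref{eq:der-conv}, which reduces the problem to showing that $\psi * \rho^\epsilon \to \psi$ uniformly on $\R^d$ for every $\psi \in \cD$; applying this to each $\psi = \partial^k \varphi$ with $|k| \le r$ then yields $\|\varphi * \rho^\epsilon - \varphi\|_{C^r} \to 0$. Using $\int \rho^\epsilon = \int \rho = 1$ and the change of variable $y = \epsilon z$, I would write
\begin{equation*}
	(\psi * \rho^\epsilon)(x) - \psi(x) = \int_{\R^d} \bigl[\psi(x - \epsilon z) - \psi(x)\bigr] \, \rho(z) \d z \,.
\end{equation*}

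The final step is then routine: since $\psi \in \cD$ is uniformly continuous on $\R^d$, the integrand tends to zero uniformly in $x$ as $\epsilon \downarrow 0$ for $z$ in the compact support of $\rho$, and the integral is controlled by $\|\rho\|_{L^1}$ times the modulus of continuity of $\psi$. I do not expect any serious obstacle here; the only minor care needed is to ensure that the convergence is genuinely uniform in $x$ (which follows from $\psi$ having compact support, hence being uniformly continuous globally on $\R^d$) and that the argument is applied to each derivative $\partial^k \varphi$ rather than to $\varphi$ itself.
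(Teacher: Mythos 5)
Your proof is correct and follows essentially the same route as the paper: the reduction via $\partial^k(\varphi * \rho^\epsilon) = (\partial^k\varphi)*\rho^\epsilon$, the integral representation using $\int\rho^\epsilon = 1$, and the change of variables $y=\epsilon z$. The only cosmetic differences are that the paper bounds $|\partial^k\varphi(x-\epsilon z)-\partial^k\varphi(x)|\lesssim \epsilon|z|$ using the $C^1$ regularity of $\partial^k\varphi$ where you invoke uniform continuity, and that you make the common-compact-support condition explicit, which the paper leaves implicit.
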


\begin{proof}
By \eqref{eq:der-conv} and $\int \rho^\epsilon = \int \rho = 1$
we can write, for any multi-index $k$,
\begin{equation*}
	\partial^k(\varphi * \rho^\epsilon)(x)-\partial^k\varphi(x)
	= \int_{\R^d} (\partial^k\varphi (x-y) - \partial^k\varphi (x)) \rho^\epsilon(y) \d y \,,
\end{equation*}
hence, by the change of variables $y = \epsilon z$,
\begin{equation} \label{eq:est-moll}
\begin{split}
	|\partial^k(\varphi * \rho^\epsilon)(x)-\partial^k\varphi(x)| 
	& \le \int_{\R^d} |\partial^k\varphi (x-y) - \partial^k\varphi(x)|
	\, |\rho^\epsilon(y)| \d y \\
	& = \int_{\R^d} |\partial^k\varphi (x- \epsilon z) - \partial^k\varphi(x)|
	\, |\rho(z)| \d z \,.
\end{split}
\end{equation}
Fix a compact set $K \subseteq \R^d$ and take $x \in K$.
Since $\rho$ is compactly supported, say on the ball $B(0,R)$,
for $\epsilon \in (0,1)$ the variable $x-\epsilon z$ belongs to the
compact set $K_R$, the $R$-neighborhood of $K$.
Then we can bound $|\partial^k\varphi (x- \epsilon z) - \partial^k\varphi(x)|
\lesssim \epsilon |z|$, because $\partial^k \varphi$ is of class $C^1$ (in fact $C^\infty$).
Since $\int |z| \, |\rho(z)| \d z < \infty$, it follows by \eqref{eq:est-moll} that
$\sup_{x\in K} |\partial^k(\varphi * \rho^\epsilon)(x)-\partial^k\varphi(x)| 
\lesssim \epsilon \to 0$.
This shows that $\varphi * \rho^\epsilon \to \varphi$ in $\cD$.
\end{proof}

We finally give the easy proof of \Cref{th:identi} (Uniqueness).

\begin{proof}[Proof of \Cref{th:identi}]
Let $\gamma>0$.
We fix a germ $(F_x)_{x\in\R^d}$, a test function $\varphi \in \cD$ with $\int \varphi \ne 0$,
a compact set $K\subseteq \R^d$ and two distributions
$f, g \in \cD'$ which satisfy, uniformly for $x \in K$,
\begin{equation}\label{eq:pre-coherent-fg}
	\lim_{\lambda\downarrow 0} \, |(f-F_x)(\varphi_x^\lambda)| =
	\lim_{\lambda\downarrow 0} \, |(g-F_x)(\varphi_x^\lambda)| = 0 \,.
\end{equation}
Our goal is to show that $f(\psi) = g(\psi)$ for every test function $\psi$
supported in $K$, i.e.\ $\psi\in\cD(K)$.
We may assume that $c := \int \varphi = 1$
(otherwise just replace $\varphi$ by $c^{-1} \, \varphi$).

We set $T := f-g$, we fix a test function $\psi \in \cD(K)$
and we show that $T(\psi) = 0$.
We have
$T(\psi) = \lim_{\lambda\downarrow 0} T(\psi * \varphi^\lambda)$
by \Cref{lem:sequential}, because
$\lim_{\lambda\downarrow 0}
\psi * \varphi^\lambda = \psi$ in $\cD$ by \Cref{th:molli}.
Recalling \eqref{eq:Tonconv}, we can write
\begin{equation*}
	|T(\psi * \varphi^\lambda)| =
	\bigg| \int_{\R^d} T(\varphi^\lambda_x) \, \psi(x) \d x \bigg|
	\le \|\psi\|_{L^1} \, \sup_{x\in K} |T(\varphi^\lambda_x)| \,,
\end{equation*}
where the last inequality holds for any $\lambda > 0$ since $\psi$ is supported in $K$.
It remains to show that $\lim_{\lambda\downarrow 0} T(\varphi^\lambda_x) = 0$ uniformly
for $x \in K$, for which it is enough to observe that
\[\begin{split}
	|T (\varphi_x^\lambda)| = |f(\varphi_x^\lambda) - g(\varphi_x^\lambda)|
	\le |(f- F_x) (\varphi_x^\lambda)| + |(g - F_x)  ( \varphi_x^\lambda)| 
	\end{split}
\]
and these terms vanish as $\lambda \downarrow 0$ uniformly
for $x \in K$, by \eqref{eq:pre-coherent-fg}.
\end{proof}

\section{Tweaking a test function}
\label{sec:tweaking}

Given an arbitrary test function $\varphi$
and an integer $r\in\N$,
we build a ``tweaked'' test function $\hat\varphi$
which annihilates monomials of degree from $1$ to $r-1$.
Recall that $\varphi^\lambda$ denotes the function
$\varphi^\lambda(x) := \lambda^{-d} \varphi(\lambda^{-1}x)$.

\begin{lemma}[Tweaking]\label{th:moments}
Fix $r \in \N = \{1,2,\ldots\}$
and distinct $\lambda_0, \lambda_1, \ldots, \lambda_{r-1} \in (0,\infty)$.
Define the constants $c_0, c_1, \ldots, c_{r-1} \in \R$ as follows:
\begin{equation}\label{eq:ci}
	c_i = 
	\prod_{k \in \{0,\ldots, {r-1}\}: \, k \ne i} \
	\frac{\lambda_k}{\lambda_k-\lambda_i} 
\end{equation}
(when $r=1$ we agree that $c_0 := 1$).
Then, for any
measurable and compactly supported  $\varphi: \R^d \to \R$
and any $a \in \R$, the ``tweaked'' function
$\hat\varphi$ defined by
\begin{equation} \label{eq:hatphi}
	\hat\varphi := a \sum_{i=0}^{r-1} c_i \, \varphi^{\lambda_i}  
\end{equation}
has integral equal to $a \int \varphi$ and annihilates monomials
of degree from $1$ to ${r-1}$:
\begin{equation} \label{eq:moments}
	\int \hat\varphi = a \int \varphi \quad \text{and} \quad
	\int_{\R^d} y^k \, \hat\varphi(y) \d y = 0\,,
	\quad \ \forall \, k \in \N_0^d : \ 1 \le |k| \le {r-1} \,.
\end{equation}
\end{lemma}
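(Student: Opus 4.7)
The plan is to reduce the multidimensional statement \eqref{eq:moments} to a single scalar identity of Lagrange-interpolation type. First, for any multi-index $k\in\N_0^d$ and any $\lambda>0$, the change of variables $y = \lambda z$ gives
\[
\int_{\R^d} y^k\, \varphi^\lambda(y) \d y \,=\, \lambda^{|k|} \int_{\R^d} z^k\, \varphi(z) \d z,
\]
since $\d y = \lambda^d \d z$, $\varphi^\lambda(y) = \lambda^{-d}\varphi(z)$, and $y^k = \lambda^{|k|} z^k$. Summing this relation against the coefficients in \eqref{eq:hatphi} yields
\[
\int_{\R^d} y^k\, \hat\varphi(y) \d y \,=\, a\,\bigg(\int_{\R^d} z^k\, \varphi(z) \d z\bigg) \sum_{i=0}^{r-1} c_i\, \lambda_i^{|k|}.
\]
Hence both assertions in \eqref{eq:moments} reduce to the single scalar identity
\[
\sum_{i=0}^{r-1} c_i\, \lambda_i^{j} \,=\, \delta_{j,0}, \qquad j = 0, 1, \ldots, r-1,
\]
the case $j=0$ (with $\lambda_i^0 = 1$) giving $\sum_i c_i = 1$ and hence $\int \hat\varphi = a\int\varphi$, while $1\le j = |k| \le r-1$ gives $\int y^k \hat\varphi \d y = 0$.

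Next, I would recognise the coefficients $c_i$ from \eqref{eq:ci} as the evaluations at the origin of the Lagrange basis polynomials
\[
L_i(x) \,:=\, \prod_{k \in \{0,\ldots,r-1\},\, k\ne i} \frac{x-\lambda_k}{\lambda_i-\lambda_k}
\]
associated to the nodes $\lambda_0,\ldots,\lambda_{r-1}$. Indeed, $L_i(0) = \prod_{k\ne i} \frac{-\lambda_k}{\lambda_i-\lambda_k} = \prod_{k\ne i}\frac{\lambda_k}{\lambda_k-\lambda_i} = c_i$. Since $\{L_i\}_{i=0}^{r-1}$ is a basis of the space of polynomials of degree at most $r-1$ satisfying $L_i(\lambda_j) = \delta_{ij}$, every polynomial $P$ of degree at most $r-1$ admits the Lagrange representation $P(x) = \sum_{i=0}^{r-1} P(\lambda_i)\,L_i(x)$; specialising at $x=0$ gives $P(0) = \sum_i P(\lambda_i)\, c_i$. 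Applying this to $P(x) = x^j$ with $0\le j\le r-1$ yields precisely $\sum_i c_i\,\lambda_i^j = \delta_{j,0}$, completing the proof. The degenerate case $r=1$ is consistent with the convention $c_0 = 1$.

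There is no serious obstacle here; the only conceptual content is the recognition of \eqref{eq:ci} as Lagrange-basis values at $0$, which immediately imports the classical interpolation identity. An alternative route would be to observe that the identity $\sum_i c_i \lambda_i^j = \delta_{j,0}$ for $j=0,\ldots,r-1$ is a square Vandermonde linear system whose unique solution is precisely given by \eqref{eq:ci}, but the Lagrange-basis viewpoint provides the shortest self-contained argument.
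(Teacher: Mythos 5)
Your proof is correct. The first half — the change of variables $y=\lambda z$ giving $\int y^k\varphi^\lambda(y)\d y=\lambda^{|k|}\int z^k\varphi(z)\d z$, and hence the reduction of \eqref{eq:moments} to the scalar identity $\sum_{i=0}^{r-1}c_i\lambda_i^{j}=\delta_{j,0}$ for $j=0,\ldots,r-1$ — is exactly the paper's first step. Where you diverge is in how this identity is established. The paper treats it as a linear system $A c=e_1$ with $A$ the Vandermonde matrix of the nodes $\lambda_0,\ldots,\lambda_{r-1}$, notes $\det A\neq 0$ since the nodes are distinct, and then reads off $c_i=(A^{-1})_{i0}$ from the explicit formula for the inverse Vandermonde matrix quoted from the literature (Klinger); this is a derivation showing that \eqref{eq:ci} is the unique choice of coefficients. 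You instead verify the identity directly by recognising $c_i=L_i(0)$, the values at the origin of the Lagrange basis polynomials at the nodes $\lambda_i$, and evaluating the Lagrange representation $x^{j}=\sum_i\lambda_i^{j}L_i(x)$ at $x=0$. Your route is shorter and fully self-contained (no external formula needed, only the standard interpolation identity, which follows from the fact that a degree-$\le r-1$ polynomial vanishing at $r$ distinct points is zero), at the cost of presenting \eqref{eq:ci} as a formula to be checked rather than explaining where it comes from; the paper's route additionally yields uniqueness of the $c_i$, which is not needed for the statement. Both arguments are valid and essentially dual descriptions of the same Vandermonde/Lagrange structure.
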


\begin{remark}
For fixed $a\in\R$,
equation \eqref{eq:moments} is a set of conditions,
one for each $k\in(\N_0)^d$ with $|k| \le {r-1}$
(where $k=0$ corresponds to $\int \hat\varphi = a \int \varphi$).
The number of such conditions
equals $r$ for $d=1$, while \emph{it is strictly larger
than $r$ for $d \ge 2$}. Nevertheless, 
we can fulfill these conditions by choosing only $r$ variables $c_0, c_1, \ldots, c_{r-1}$
as in \eqref{eq:ci}. This is due to the scaling properties
of monomials.
\end{remark}

We now show that in the coherence condition
\eqref{eq:coherent} we can replace $\varphi\in\cD$ by a suitable $\hat\varphi$
as in \Cref{th:moments}. Assume that for some $R_\varphi < \infty$ we have that
\begin{equation*}
	\int\varphi\neq 0, \qquad \varphi \text{ is supported in } B(0, R_\varphi) \,.
\end{equation*}
Then, given $r\in\N$, we define $\hat\varphi = \hat\varphi^{[r]}$ 
by \eqref{eq:hatphi} for $a = 1/ \int \varphi$
and for suitable $\lambda_i$'s:
\begin{equation}\label{eq:hatvarphi}
	\hat\varphi := \frac{1}{\int \varphi} \, \sum_{i=0}^{r-1} c_i \, 
	\varphi^{\lambda_i} \qquad
	\text{where}  \ \ \ \lambda_i := \frac{2^{-i-1}}{1+R_\varphi}
	\ \ \ \text{and $c_i$ as in \eqref{eq:ci}} \,.
\end{equation}

\begin{lemma}\label{th:coherence-r}
Let $F = (F_x)_{x\in\R^d}$ be a $(\alpha,\gamma)$-coherent germ
as in \Cref{def:coherent-germ}. For any  $r \in \N$, the coherence condition
\eqref{eq:coherent} still holds if $\varphi$ is replaced by
$\hat\varphi = \hat\varphi^{[r]}$ 
defined in \eqref{eq:hatvarphi}.
Such a test function $\hat\varphi$ has the following properties:
\begin{gather}\label{eq:supphatphi}
	\hat\varphi \text{ is supported in } B(0,\tfrac{1}{2}) \,, \\
	\label{eq:monomials}
	\int_{\R^d} \hat\varphi(y) \d y = 1 \,, \qquad 
	\int_{\R^d} y^k \, \hat\varphi(y) \d y = 0
	\quad \text{for} \ 1 \le |k| \le r-1 \,, \\
	\label{eq:boundhatvarphi}
	\| \hat\varphi \|_{L^1} \le \frac{e^2 \, r}{|\int \varphi|} \, \|\varphi\|_{L^1} \, .
\end{gather}
\end{lemma}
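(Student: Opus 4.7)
The plan is to verify the three conclusions \eqref{eq:supphatphi}, \eqref{eq:monomials}, \eqref{eq:boundhatvarphi} directly from the explicit formula \eqref{eq:hatvarphi} and then extend coherence by linearity, using the elementary scaling identity $(\varphi^{\mu})^{\epsilon}_y = \varphi^{\epsilon\mu}_y$.

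Property \eqref{eq:monomials} is an immediate application of \Cref{th:moments} with $a:=1/\int\varphi$ and the distinct scales $\lambda_i = 2^{-i-1}/(1+R_\varphi)$; property \eqref{eq:supphatphi} follows because $\varphi^{\lambda_i}$ is supported in $B(0,\lambda_i R_\varphi)\subseteq B(0,2^{-i-1})\subseteq B(0,\tfrac12)$. For the $L^1$ bound I would use $\|\varphi^{\lambda_i}\|_{L^1}=\|\varphi\|_{L^1}$ to reduce the estimate to showing $\sum_{i=0}^{r-1}|c_i| \le e^2 r$. With the geometric choice of $\lambda_i$ the factors simplify to $\lambda_k/(\lambda_k-\lambda_i)=1/(1-2^{k-i})$, hence
\[
    |c_i| \;=\; \prod_{j=1}^{i}\frac{1}{1-2^{-j}} \cdot \prod_{j=1}^{r-1-i}\frac{1}{2^j-1}\,.
\]
The first product is bounded by $\exp\bigl(2\sum_{j\ge 1}2^{-j}\bigr)=e^2$, using $-\log(1-x)\le 2x$ for $x\le\tfrac12$; the second is $\le 1$ since every factor is $\le 1$ for $j\ge 1$. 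Summing over $i$ gives $\sum_i|c_i| \le r e^2$, which yields \eqref{eq:boundhatvarphi}.

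For the preserved coherence, the scaling identity gives
\[
    \hat\varphi^{\epsilon}_y \;=\; \frac{1}{\int\varphi}\sum_{i=0}^{r-1} c_i\, \varphi^{\epsilon\lambda_i}_y\,,
\]
and applying the coherence \eqref{eq:coherent} of $F$ with $\varphi$ at the admissible scale $\epsilon\lambda_i\in(0,1]$ to each summand yields
\[
    |(F_z-F_y)(\varphi^{\epsilon\lambda_i}_y)| \;\lesssim\; (\epsilon\lambda_i)^\alpha (|z-y|+\epsilon\lambda_i)^{\gamma-\alpha} \;\le\; \lambda_i^\alpha\, \epsilon^\alpha (|z-y|+\epsilon)^{\gamma-\alpha}\,,
\]
where I used $\alpha\le 0$ and $\lambda_i\le 1$ on the first factor, and $\gamma-\alpha\ge 0$ together with $\epsilon\lambda_i\le\epsilon$ on the second. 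Since the finite family $\{\lambda_i^\alpha\}_{i=0}^{r-1}$ is bounded by a constant depending only on $r, \alpha, R_\varphi$, linearity of $F_z-F_y$ and the triangle inequality give the coherence estimate \eqref{eq:coherent} with $\hat\varphi$ in place of $\varphi$ and the same exponents $\alpha,\gamma$ (the uniformity in $z,y\in K$ is preserved because the base point $y$ is unchanged).

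The only genuine technical point I expect is the $L^1$ estimate: recognising that the coefficient products telescope into the two simple geometric products above and matching the explicit constant $e^2 r$. Once the coefficient bound is in hand, everything else is routine bookkeeping plus the elementary scaling identity.
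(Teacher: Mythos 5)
Your proposal is correct and follows essentially the same route as the paper: the support and moment properties come from \Cref{th:moments}, the $L^1$ bound reduces to the coefficient estimate $|c_i|\le e^2$ for the Vandermonde-type products (you bound the infinite product via $-\log(1-x)\le 2x$, the paper via $(1-x)^{-1}\le e^{2x}$, which is the same computation), and the transfer of coherence is done by the scaling identity $(\varphi^{\lambda_i})^\epsilon_y=\varphi^{\epsilon\lambda_i}_y$, linearity and $\lambda_i^\alpha\lesssim 1$, exactly as the paper does (with explicit constants) in Step~0 of the proof of the Reconstruction Theorem.
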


\begin{proof}
The function $\hat\varphi$ is supported in $B(0,1/2)$ because
$\lambda_i \le \frac{1}{2 \,R_\varphi}$.
Relation \eqref{eq:monomials} holds by \eqref{eq:moments}.
To prove \eqref{eq:boundhatvarphi}, note that by \eqref{eq:ci} we can bound
\begin{equation}\label{eq:boundonci}
	|c_i| =
	\prod_{k \in \{0,\ldots, r-1\}: \, k \ne i} \
	\frac{1}{|1-2^{k-i}|}
	\le \prod_{m=1}^\infty \frac{1}{1-2^{-m}}
	\le \prod_{m=1}^\infty (1+2^{-m})
	\le e^2 \,,
\end{equation}
because $|1-2^{k-i}| \ge 1$ for $k > i$
and $(1-x)^{-1} \le 1+2x \le e^{2x}$ for $0 \le x \le \frac{1}{2}$.
This bound proves \eqref{eq:boundhatvarphi},
by \eqref{eq:hatvarphi} and the fact that $\|\varphi^{\lambda_i}\|_{L^1}
= \|\varphi\|_{L^1}$.
\end{proof}

\begin{proof}[Proof of \Cref{th:moments}]
If $r=1$ equation \eqref{eq:moments} reduces to $\int \hat\varphi = \int \varphi$,
which holds because $\hat\varphi = \varphi^{\lambda_0}$
(recall that $c_0 = 1$ when $r=1$). Henceforth we
fix $r \in \N$ with $r \ge 2$.

Fix distinct 
$\lambda_0, \lambda_1, \ldots, \lambda_{r-1} \in (0,\infty)$
and define $c_0, c_1, \ldots, c_{r-1}$ by \eqref{eq:ci}.
Define $\hat\varphi$ by \eqref{eq:hatphi}.
For any multi-index $k\in\N_0^d$, 
since $y^k := y_1^{k_1} y_2^{k_2} \cdots y_d^{k_d}$, we can compute
\begin{equation*}
	\int_{\R^d} y^k \, \hat\varphi(y) \d y = 
	\sum_{i=0}^{r-1} c_i \, \int_{\R^d} y^k \,
	\lambda_i^{-d} \, \varphi(\lambda_i^{-1} y) \d y
	= \bigg( \sum_{i=0}^{r-1} c_i \, \lambda_i^{|k|} \bigg) 
	\int_{\R^d} x^k \, \varphi(x) \d x \,,
\end{equation*}
using the change of variables $y =\lambda_i x$.
Therefore $\hat\varphi$ fulfills the conditions in \eqref{eq:moments} if 
\begin{equation*}
	\sum_{i=0}^{r-1} c_i = 1 \qquad \text{and} \qquad
	\sum_{i=0}^{r-1} c_i \, \lambda_i^{|k|} = 0 \quad \text{for } 1 \le |k| \le {r-1} \,.
\end{equation*}
This is a linear system of $r$ equations, namely
\begin{equation*}
	A \, \left(\begin{matrix}
	c_0 \\ c_1 \\ c_2 \\ \vdots \\ c_{r-1}
	\end{matrix}\right) 
	=
	\left(\begin{matrix}
	1 \\ 0 \\ 0 \\ \vdots \\ 0
	\end{matrix}\right)
	\qquad \text{where} \qquad
	A := \left(\begin{matrix}
	1 & 1 & \ldots & 1 \\
	\lambda_0 \, & \lambda_1 \, & \ldots \, & \lambda_d \\
	\lambda_0^2 \, & \lambda_1^2 \, & \ldots \, & \lambda_d^2 \\
	\vdots \, & \vdots \, & \vdots \, & \vdots \\
	\lambda_0^{r-1} \, & \lambda_1^{r-1} \, & \ldots \, & \lambda_d^{r-1}
	\end{matrix}\right) \,.
\end{equation*}
Note that $A$ is a Vandermonde matrix with
$\det(A) = \prod_{0 \le i < j \le d} (\lambda_j - \lambda_i) \ne 0$, because
$\lambda_0, \lambda_1, \ldots, \lambda_{r-1}$ are all distinct. 
The inverse matrix $A^{-1}$ is explicit, see
equation~(7) (where a transpose is missing) in \cite{cf:Kli}:\footnote{See also
\url{https://proofwiki.org/wiki/Inverse_of_Vandermonde_Matrix}}
\begin{equation*}
	(A^{-1})_{ij} \ = \ (-1)^{j} \,
	\frac{\displaystyle \,
	\sum_{\substack{K \subseteq \{0,\ldots, r-1\} \setminus \{i\}\\
	|K| = r-1-j}} \ \prod_{k\in K} \lambda_k \, }
	{\displaystyle\prod_{k \in \{0,\ldots, r-1\} \setminus \{i\}} (\lambda_k - \lambda_i)}
	\qquad  \forall 0 \le i,j \le r-1 \,.
\end{equation*}
In particular, if we set $j=0$, we see that $c_i = (A^{-1})_{i0}$ is given by
\begin{equation*}
	c_i = \frac{\displaystyle \prod_{k \in \{0,\ldots,r-1\}
	\setminus \{i\}} \lambda_k}
	{\displaystyle
	\prod_{k \in \{0,\ldots, r-1\} \setminus \{i\}} (\lambda_k - \lambda_i)}
	= \prod_{k \in \{0,\ldots, {r-1}\}: \, k \ne i} \
	\frac{\lambda_k}{\lambda_k-\lambda_i} \,,
\end{equation*}
which matches \eqref{eq:ci}.
\end{proof}

\section{Basic estimates on convolutions}
\label{sec:basic-estimates-convolutions}

In this section we give two elementary but important Lemmas on convolutions.
We fix $r\in\N = \{1,2,\ldots\}$ and a test function $\hat\varphi = \hat\varphi^{[r]} \in \cD$ 
with the following
properties:
\begin{gather}\label{eq:supphatphi0}
	\hat\varphi \text{ is supported in } B(0,\tfrac{1}{2}) \,, \\
	\label{eq:monomials0}
	\int_{\R^d} y^k \, \hat\varphi(y) \d y = 0
	\quad \text{for} \ 1 \le |k| \le r-1 \,.
\end{gather}
We stress that \eqref{eq:monomials0} is \emph{not} required for $k=0$
(indeed, we typically want $\int \hat\varphi = 1$).

\begin{remark}\label{rem:fromphitohatphi}
Starting from an \emph{arbitrary} test function $\varphi\in\cD$, we can
define $\hat\varphi$ as in \Cref{th:moments},
for \emph{any} choice of distinct $(\lambda_i)_{i=0,\ldots, r-1}$ and $a\in\R$.
Then \eqref{eq:monomials0} holds by \eqref{eq:moments},
while \eqref{eq:supphatphi0} holds provided we choose
the $\lambda_i$'s small enough.
\end{remark}

Next we define 
\begin{equation}\label{eq:checkvarphi}
	\check{\varphi} := \hat\varphi^{\frac{1}{2}} 
	- \hat\varphi^{2},
\end{equation}
where  by $ \hat\varphi^{\frac{1}{2}},\hat\varphi^2$ we mean 
$\hat\varphi^\lambda(z) = \lambda^{-d} \hat\varphi(\lambda^{-1}z)$ for $\lambda = \frac12,2$, respectively. 
The function $\check{\varphi}$ will play an important role in the sequel.
It follows by \eqref{eq:supphatphi0} and \eqref{eq:monomials0} that
\begin{gather}\label{eq:suppcheckphi}
	\check\varphi \text{ is supported in } B(0,1) \,, \\
	\label{eq:checkmonomials}
	\int_{\R^d} y^k \, \check\varphi(y) \d y = 0
	\quad \text{for} \ 0 \le |k| \le r-1 \,.
\end{gather}
We stress that \eqref{eq:checkmonomials} holds also for $k=0$, because
$\int \hat\varphi^{\frac{1}{2}} = \int \hat\varphi^{2} = \int \hat\varphi^\lambda$
for any $\lambda$.

Our first Lemma concerns the convolution of a test function $\eta$ with $\check\varphi$.

\begin{lemma}\label{th:keyintest}
Fix a test function $\eta\in\cD(H)$ supported in a compact set $H \subseteq \R^d$.
Let $\check\varphi \in \cD$ satisfy \eqref{eq:suppcheckphi}
and \eqref{eq:checkmonomials}.
For any $\epsilon>0$,  the function $\check\varphi^{\epsilon} * \eta$ 
 is supported in
the $\epsilon$-enlargement $\bar H_{\epsilon}$ of $H$, see \eqref{eq:enlargement}, and 
\begin{equation}\label{neq:conclusio}
	\|\check\varphi^{\epsilon} * \eta\|_{L^1}  \le \mathrm{Vol}(\bar H_{\epsilon})\, \|\eta\|_{C^r} \,
	\|\check\varphi\|_{L^1} \, \epsilon^r \, .
\end{equation}
\end{lemma}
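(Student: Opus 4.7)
The plan is first to verify the support property by unwinding the definition of convolution, and then to obtain the $L^1$ estimate as a pointwise bound times the volume of the support, where the pointwise bound is obtained by a Taylor expansion of $\eta$ against the vanishing moments of $\check\varphi^\epsilon$.

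For the support, I would note that $\check\varphi^\epsilon$ is supported in $B(0,\epsilon)$, by \eqref{eq:suppcheckphi} and the scaling \eqref{eq:scale}. Writing $(\check\varphi^\epsilon \ast \eta)(x) = \int \check\varphi^\epsilon(x-y)\,\eta(y)\d y$, the integrand vanishes unless there exists $y\in H$ with $x-y\in B(0,\epsilon)$, i.e.\ $x\in H+B(0,\epsilon)=\bar H_\epsilon$.

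The heart of the proof is the pointwise bound. First I would observe that the vanishing-moment property \eqref{eq:checkmonomials} scales cleanly: by the change of variables $z=\epsilon u$,
\begin{equation*}
\int_{\R^d} z^k\,\check\varphi^\epsilon(z)\d z \;=\; \epsilon^{|k|}\int_{\R^d} u^k\,\check\varphi(u)\d u \;=\; 0
\qquad \text{for } 0\le |k|\le r-1.
\end{equation*}
Next, I would rewrite $(\check\varphi^\epsilon\ast\eta)(x)=\int \check\varphi^\epsilon(z)\,\eta(x-z)\d z$ and Taylor expand $\eta$ at $x$ to order $r$:
\begin{equation*}
\eta(x-z) \;=\; \sum_{|k|\le r-1}\frac{(-z)^k}{k!}\,\partial^k\eta(x)\;+\;R_r(x,z),
\end{equation*}
where the remainder satisfies $|R_r(x,z)|\lesssim \|\eta\|_{C^r}\,|z|^r$ (using the integral form and $\|\partial^k\eta\|_\infty\le\|\eta\|_{C^r}$ for $|k|\le r$). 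Pairing with $\check\varphi^\epsilon$, the polynomial part is annihilated by the above moment identities, so
\begin{equation*}
(\check\varphi^\epsilon\ast\eta)(x) \;=\; \int_{\R^d} R_r(x,z)\,\check\varphi^\epsilon(z)\d z.
\end{equation*}
Since $\check\varphi^\epsilon$ is supported in $B(0,\epsilon)$, on its support $|z|\le\epsilon$, and thus
\begin{equation*}
|(\check\varphi^\epsilon\ast\eta)(x)| \;\le\; \|\eta\|_{C^r}\,\epsilon^r\,\|\check\varphi^\epsilon\|_{L^1} \;=\; \|\eta\|_{C^r}\,\|\check\varphi\|_{L^1}\,\epsilon^r,
\end{equation*}
using $\|\check\varphi^\epsilon\|_{L^1}=\|\check\varphi\|_{L^1}$.

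Combining the uniform pointwise bound with the support statement by integrating over $\bar H_\epsilon$ immediately yields \eqref{neq:conclusio}. The only subtlety is tracking the dimensional/combinatorial constant appearing in the multi-index Taylor remainder, which is harmless but must be kept inside the implicit $\lesssim$ or absorbed into the $\|\eta\|_{C^r}$ factor; this is the one place a careful bookkeeping is needed, but it does not affect the structure of the argument.
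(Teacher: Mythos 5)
Your proposal is correct and follows essentially the same route as the paper's proof: support of the convolution from the supports of $\check\varphi^\epsilon$ and $\eta$, Taylor expansion of $\eta$ to order $r-1$ whose polynomial part is killed by the vanishing moments \eqref{eq:checkmonomials}, the remainder bound giving the pointwise estimate $\|\eta\|_{C^r}\,\|\check\varphi\|_{L^1}\,\epsilon^r$, and integration over $\bar H_\epsilon$. The only (cosmetic) difference is that you center the expansion at the output point $x$ after the change of variables, while the paper expands $\eta$ at the base point $y$ of $\check\varphi^\epsilon(y-\cdot)$; the dimensional constant in the multi-index remainder that you flag is equally implicit in the paper's bound \eqref{neq:boundTaylor} and is harmless.
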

\begin{proof}
Since $\eta$ is supported in $H$ and $\check{\varphi}$ is supported in $B(0,1)$, 
then $\check{\varphi}^{\epsilon} * \eta$ is supported in $\bar H_{\epsilon}$.
Fix $y \in \bar H_{\epsilon}$ and
denote by $p_{y}(\cdot) := \sum_{|k| \le r-1} \frac{\partial^k \eta (y)}{k!} \, 
(\cdot -y)^k$ the Taylor polynomial of $\eta$ of order $r-1$
based at $y$, which satisfies for all $z\in\R^d$
\begin{equation} \label{neq:boundTaylor}
	|\eta(z) - p_{y}(z)| \le \|\eta\|_{C^r} \, |z-y|^r \,.
\end{equation}
It follows by \eqref{eq:checkmonomials}
that $\int_{\R^d} \check{\varphi}^{\epsilon}(y-z)
\, p_y(z) \d z = 0$, hence we can write
\begin{equation*}
\begin{split}
	(\check{\varphi}^{\epsilon} * \eta)(y) = \int_{\R^d} \check{\varphi}^{\epsilon}(y-z)
	\, \big\{ \eta(z) -
	p_{y}(z) \big\}  \d z \,.
\end{split}
\end{equation*}
Since $\check{\varphi}^{\epsilon}$ is supported in $B(0,\epsilon)$, by \eqref{neq:boundTaylor}
\begin{equation*}
\begin{split}
	|(\check{\varphi}^{\epsilon} * \eta)(y)| & \le \|\eta\|_{C^r} \,
	\int_{\R^d} |\check{\varphi}^{\epsilon}(y-z)| \,
	|z-y|^r \d z  \le \|\eta\|_{C^r} \,
	\|\check{\varphi}\|_{L^1} \, \epsilon^r \,.
\end{split}
\end{equation*}
This completes the proof of \eqref{neq:conclusio}.
\end{proof}

Our second Lemmas concerns convolutions of (scaled versions of) a test function $\psi$
with either $\hat\varphi$ or $\check\varphi$,
integrated against an arbitrary function $G$.

\begin{lemma}\label{th:keyintest2}
Let $\lambda, \epsilon>0$, $K\subset\R^d$ a compact set and $G:\R^d\to\R$ 
a measurable function. 
{Let $\hat\varphi, \check\varphi \in \cD$ satisfy \eqref{eq:supphatphi0}, \eqref{eq:monomials0} 
and \eqref{eq:suppcheckphi}, \eqref{eq:checkmonomials}, respectively.}
Then
for all $x\in K$ 
and $\psi\in\cB_r$, see \eqref{eq:cBr},
\begin{gather}
	\label{eq:Ge1}
	\left|\int_{\R^d} G(y) \, (\hat\varphi^{2\epsilon} * \psi_x^\lambda)(y) \, 
	\d y\right|\leq 2^d \,
	\|\hat\varphi\|_{L^1}\, \sup_{B(x,\lambda+{\epsilon})} |G|\, , \\ 
	\label{eq:Ge2}
	\left|\int_{\R^d} G(y) \, (\check\varphi^{\epsilon} * \psi_x^\lambda)(y) \d y\right|
	\leq 4^d\, 
	\|\check\varphi\|_{L^1}\, 
	\min\big\{\epsilon/\lambda,1\big\}^{r} \,
	\sup_{B(x,\lambda+{\epsilon})} |G|\, .
\end{gather}
\end{lemma}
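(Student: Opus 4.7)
The strategy is to treat \eqref{eq:Ge1} and the large-scale case of \eqref{eq:Ge2} by a straightforward convolution estimate, and the small-scale case of \eqref{eq:Ge2} by a Taylor-expansion argument that exploits the cancellation of $\check\varphi$ against all monomials up to degree $r-1$. I would begin with the support observation: since $\hat\varphi$ is supported in $B(0,\tfrac12)$ and $\check\varphi$ in $B(0,1)$, both $\hat\varphi^{2\epsilon}$ and $\check\varphi^{\epsilon}$ are supported in $B(0,\epsilon)$; combined with $\psi_x^\lambda$ supported in $B(x,\lambda)$, the two convolutions live on $B(x,\lambda+\epsilon)$. Each integral in \eqref{eq:Ge1}--\eqref{eq:Ge2} is therefore bounded by $\sup_{B(x,\lambda+\epsilon)}|G|$ times the $L^1$-norm of the corresponding convolution.

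For \eqref{eq:Ge1}, Young's convolution inequality gives $\|\hat\varphi^{2\epsilon} * \psi_x^\lambda\|_{L^1} \le \|\hat\varphi\|_{L^1}\,\|\psi\|_{L^1}$. Since $\psi \in \cB_r$ is supported in $B(0,1)$ with $\|\psi\|_\infty \le 1$, we have $\|\psi\|_{L^1} \le \mathrm{Vol}(B(0,1)) \le 2^d$, which yields \eqref{eq:Ge1}. The same reasoning with $\check\varphi$ in place of $\hat\varphi$ settles \eqref{eq:Ge2} when $\epsilon \ge \lambda$, since then $\min\{\epsilon/\lambda,1\}^r = 1$ and the constant $2^d$ lies well below the required $4^d$.

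The substantive case is \eqref{eq:Ge2} when $\epsilon < \lambda$, where the bound must reflect the genuine gain $(\epsilon/\lambda)^r$. Here I would reproduce the Taylor-remainder trick from the proof of \Cref{th:keyintest}, but applied to the smooth factor $w \mapsto \psi_x^\lambda(y-w)$ expanded around $w=0$ to order $r-1$: the polynomial part vanishes upon integration against $\check\varphi^\epsilon(w)$ thanks to \eqref{eq:checkmonomials}, while the remainder is controlled pointwise by $\|\psi_x^\lambda\|_{C^r}\,|w|^r$. Since $\check\varphi^\epsilon$ is supported on $|w|\le\epsilon$, this yields
\begin{equation*}
	|(\check\varphi^\epsilon * \psi_x^\lambda)(y)| \,\lesssim\, \|\psi_x^\lambda\|_{C^r}\,\epsilon^r\,\|\check\varphi\|_{L^1} \,\le\, \lambda^{-d}\,(\epsilon/\lambda)^r\,\|\check\varphi\|_{L^1},
\end{equation*}
using \Cref{rem:order-r-d} to bound $\|\psi_x^\lambda\|_{C^r} \le \lambda^{-d-r}$. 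Integrating this $L^\infty$ bound over the support ball $B(x,\lambda+\epsilon) \subseteq B(x,2\lambda)$, whose volume is at most $4^d\lambda^d$, produces the required $4^d\,\|\check\varphi\|_{L^1}\,(\epsilon/\lambda)^r\,\sup_{B(x,\lambda+\epsilon)}|G|$.

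The only technical point lies in the small-$\epsilon$ case: it is the zeroth-order cancellation $\int \check\varphi = 0$---absent from $\hat\varphi$, which typically has integral $1$---combined with the higher-order vanishing moments that turns the trivial estimate into the decay $(\epsilon/\lambda)^r$. I do not anticipate any genuine obstacle beyond bookkeeping: the argument mirrors \Cref{th:keyintest} almost verbatim, the only new ingredient being that the $C^r$-norm now sits on the scaled test function $\psi_x^\lambda$, producing the blow-up factor $\lambda^{-d-r}$ that cancels exactly against $\mathrm{Vol}(B(x,2\lambda))$ and $\epsilon^r$ to give the clean ratio $(\epsilon/\lambda)^r$.
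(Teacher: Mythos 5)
Your proposal is correct and follows essentially the same route as the paper: the support observation plus Young's inequality with $\|\psi\|_{L^1}\le 2^d$ handles \eqref{eq:Ge1} and the case $\epsilon\ge\lambda$ of \eqref{eq:Ge2}, while for $\epsilon<\lambda$ your Taylor-remainder argument with the vanishing moments of $\check\varphi$, the bound $\|\psi_x^\lambda\|_{C^r}\le\lambda^{-r-d}$ and the volume factor $4^d\lambda^d$ is exactly the content of \Cref{th:keyintest}, which the paper simply invokes at that point. No gaps.
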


\begin{proof}
Since $\hat\varphi$ and $\psi$ are supported in $B(0,1/2)$ and $B(0,1)$ respectively,
the function $\hat\varphi^{2\epsilon} * \psi_x^\lambda$ is 
supported in $B(x,\lambda+{\epsilon})$.
Then we can bound 
\[
	\left|\int_{\R^d} G(y) \, (\hat\varphi^{2\epsilon} * \psi_x^\lambda)(y) \d y\right|\leq 
	\|\hat\varphi^{2\epsilon} * \psi_x^\lambda\|_{L^1}\, 
	\sup_{B(x,\lambda+\epsilon)} |G|\, .
\]
Now
\begin{equation*}
	\|\hat\varphi^{2\epsilon} * \psi_x^\lambda\|_{L^1}
	\le \|\hat\varphi^{2\epsilon} \|_{L^1} \| \psi_x^\lambda\|_{L^1}
	\le 2^d \, \|\hat\varphi \|_{L^1} \,,
\end{equation*}
because $\|\hat\varphi^{2\epsilon} \|_{L^1} = \|\hat\varphi \|_{L^1}$ and
\eqref{eq:Ge1} is proved, because
\begin{equation}\label{eq:psiBr}
\sup_{\psi \in \cB_r} \|\psi_x^\lambda\|_{L^1} = \sup_{\psi \in \cB_r} \|\psi\|_{L^1} 
\le 2^d \, \sup_{\psi \in \cB_r} \|\psi\|_{\infty} \le 2^d\, ,
\end{equation}
since the volume of the unit ball in $\R^d$ is bounded above by $2^d$.
Analogously
\[
	\left|\int_{\R^d} G(y) \, (\check\varphi^{2\epsilon} * \psi_x^\lambda)(y) \d y\right|
	\leq 
	\|\check\varphi^{2\epsilon} * \psi_x^\lambda\|_{L^1}\, 
	\sup_{B(x,\lambda+\epsilon)} |G|\, .
\]
As in \eqref{eq:psiBr} we can bound
\begin{equation*}
	\|\check\varphi^{2\epsilon} * \psi_x^\lambda\|_{L^1} \le
	\|\check\varphi^{2\epsilon}\|_{L^1} \, \| \psi_x^\lambda\|_{L^1}
	= \|\check\varphi\|_{L^1} \, \| \psi\|_{L^1}
	\le 2^d \, \|\check\varphi\|_{L^1} \,,
\end{equation*}
which proves \eqref{eq:Ge2} for $\lambda \le \epsilon$.
When $\lambda > \epsilon$, we apply \eqref{neq:conclusio} to get
\[
	\|\check\varphi^{2\epsilon} * \psi_x^\lambda\|_{L^1}\leq 
	\mathrm{Vol}(B(x,\lambda+\epsilon)) \, 
	\|\psi_x^\lambda\|_{C^r}\,\epsilon^r\, \|\check\varphi\|_{L^1}.
\]
Note that $\mathrm{Vol}(B(x,\lambda+\epsilon))\leq (2(\lambda+\epsilon))^d
\leq 4^d \, \lambda^d $ for $\lambda > \epsilon$. Since $\psi\in\cB_r$, 
see \eqref{eq:cBr}, we can easily bound $\|\psi_x^\lambda\|_{C^r}$ by \eqref{eq:scale}:
\begin{equation*}
	\|\psi_x^\lambda\|_{C^r} = \max_{|k| \le r} 
	\|\partial^k (\psi_x^\lambda) \|_\infty
	= \max_{|k| \le r} 
	\| \lambda^{-|k|-d} (\partial^k \psi) \|_\infty
	\le \lambda^{-r-d}\,.
\end{equation*}
The proof of \eqref{eq:Ge2} is complete.
\end{proof}

\section{Proof of the Reconstruction Theorem for $\gamma > 0$}
\label{sec:reco+}

In this section  we prove \Cref{th:reco+} when $\gamma > 0$.
Given any $\gamma$-coherent germ $F = (F_x)_{x\in\R^d}$,
we show the existence of a distribution $f\in\cD'$ which satisfies \eqref{eq:reco+1}.
Uniqueness of $f$ follows by \Cref{th:identi}, because the right hand side of \eqref{eq:reco+1}
vanishes for $\gamma > 0$. Then linearity of the map $F\mapsto\cR F$ 
is a consequence of  uniqueness.

We now turn to existence. A large part of the proof actually holds for any $\gamma \in \R$,
only in the last steps we specialize to $\gamma > 0$.

\subsubsection*{Step 0. Setup}
We fix a $(\balpha,\gamma)$-coherent germ $(F_x)_{x\in\R^d}$ as in \Cref{def:coherent-germ},
for some $\balpha = (\alpha_K)$, with local
homogeneity bounds $\bbeta = (\beta_K)$ as in \Cref{th:boundor}. Without loss of generality,
we  suppose that
with $K\mapsto\alpha_K$ and $K\mapsto\beta_K$ are monotone as in \eqref{eq:monotone-alpha} and 
\eqref {eq:monotone-beta}. We will specify when we need to assume $\gamma > 0$.

We fix a compact set $K\subset\R^d$ and
define its $3/2$-fattening $\bar K_{3/2}$ as in \eqref{eq:enlargement}.
Throughout the proof we set
\begin{equation}\label{eq:alphabeta}
	\alpha:=\alpha_{\bar K_{3/2}} \,, \qquad \beta:=\beta_{\bar K_{3/2}} \,,
\end{equation}
so that \eqref{eq:coherent} and \eqref{eq:bounded-order-germ} hold on the compact set $\bar K_{3/2}$.
More explicitly, there are finite constants $C_1,C_2$ such that
for all $y, z \in \bar K_{3/2}$ with $|z-y| \le 2$ and $\epsilon \in (0,1]$ we have
\begin{gather} \label{eq:ineq1}
	|(F_z-F_y)(\varphi_y^\epsilon)| \le C_1 \, \epsilon^\alpha
	\, (|z-y|+\epsilon)^{\gamma-\alpha} \,, \qquad
	|F_y(\varphi_y^\epsilon)| \le C_2 \, \epsilon^{\beta} \,,
\end{gather}
and in fact we can choose $C_1 := \vertiii{F}^\co_{\bar K_{3/2}, \varphi, \alpha, \gamma}$.
We also fix an integer $r\in\N$ such that 
\begin{equation}\label{eq:erre}
	r=r_{\bar K_{3/2}} > \max\{-\alpha,-\beta\} \,.
\end{equation}

By \Cref{th:coherence-r}, we can build a ``tweaked'' test function $\hat\varphi = \hat\varphi^{[r]}$ 
which fulfills properties \eqref{eq:supphatphi} and \eqref{eq:monomials}, namely the support of 
$\hat\varphi$ is included in $B(0,1/2)$ and
\[
	\int_{\R^d} \hat\varphi(y) \d y = 1 \,, \qquad 
	\int_{\R^d} y^k \, \hat\varphi(y) \d y = 0
	\quad \text{for} \ 1 \le |k| \le r-1 \,.
\]
We claim that we can replace $\varphi$ by $\hat\varphi$ in \eqref{eq:ineq1} and obtain,
for all $y, z \in \bar K_{3/2}$ with $|z-y| \le 2$ and $\epsilon \in (0,1]$,
\begin{align} \label{eq:hyp1}
	|(F_z-F_y)(\hat\varphi_y^\epsilon)| & \le \hat C_1 \, \epsilon^\alpha
	\, (|z-y|+\epsilon)^{\gamma-\alpha} \,, 
 \\	\label{eq:hyp2} |F_y(\hat\varphi_y^\epsilon)| & \le \hat C_2 \, \epsilon^{\beta} \,,
\end{align}
where the constants $\hat C_1, \hat C_2$ are given by
\begin{equation} \label{eq:controlC12}
	\hat C_1 := \tfrac{e^2}{|\int \varphi|} \,
	r \, \big(\tfrac{2^{-r-1}}{1+R_\varphi}\big)^{\alpha} \,
	\vertiii{F}^\co_{\bar K_{3/2}, \varphi, \alpha, \gamma} \,, \qquad \hat C_2 :=
	\tfrac{e^2}{|\int \varphi|} \,
	r \, \big(\tfrac{2^{-r-1}}{1+R_\varphi}\big)^{\beta\wedge 0} \, C_2 \,,
\end{equation}
and $R_\varphi$ is such that $\varphi$ is supported in $B(0,R_\varphi)$.

Indeed, for every $\epsilon \in (0,1]$ and $i=0,\ldots, r-1$ we can estimate by \eqref{eq:hatvarphi}
\begin{equation*}
	(\epsilon\lambda_i)^\alpha
	\, (|z-y|+\epsilon\lambda_i)^{\gamma-\alpha}
	\le \big(\tfrac{2^{-r-1}}{1+R_\varphi}\big)^{\alpha} \
	\epsilon^\alpha
	\, (|z-y|+\epsilon)^{\gamma-\alpha} \,,
\end{equation*}
because $\frac{2^{-r-1}}{1+R_\varphi} < \lambda_i \le 1$
(recall that $\alpha\le 0$ and $\gamma \ge \alpha$, see \Cref{def:coherent-germ}).
Similarly
\begin{equation*}
	(\epsilon\lambda_i)^\beta \le 
	\big(\tfrac{2^{-r-1}}{1+R_\varphi}\big)^{\beta\wedge 0} \ \epsilon^\beta \,.
\end{equation*}
Plugging these bounds into \eqref{eq:ineq1},
by \eqref{eq:hatvarphi} and \eqref{eq:boundonci} we obtain
\eqref{eq:hyp1}-\eqref{eq:hyp2}-\eqref{eq:controlC12}.

\subsubsection*{Step 1. Strategy.}

We can now outline our strategy.
We use the mollifiers
\begin{equation*}
	\rho^\epsilon(z) = \epsilon^{-d} \rho(\epsilon^{-1} z)
\end{equation*}
where $\rho$ is defined as follows (recall that $\hat\varphi^2$ means 
$\hat\varphi^\lambda(z) = \lambda^{-d} \hat\varphi(\lambda^{-1}z)$
for $\lambda = 2$):
\begin{equation} \label{eq:rhoepsilon}
	\rho := \hat\varphi^2 * \hat\varphi \qquad \text{and} \qquad
	\epsilon = \epsilon_n := 2^{-n}, \quad n\in\N_0  \,.
\end{equation}
Note that $\int\rho=\int  \hat\varphi^2\int \hat\varphi=1$.

This peculiar choice of $\rho$ 
ensures that \emph{the difference $\rho^{\frac{1}{2}} - \rho$ is a convolution}:
\begin{equation} \label{eq:diffconv}
	\rho^{\frac{1}{2}} - \rho = \hat\varphi * \check{\varphi}
	\qquad \text{where we define} \qquad \check{\varphi} := \hat\varphi^{\frac{1}{2}} 
	- \hat\varphi^{2} \,,
\end{equation}
because $(f^\lambda)^{\lambda'} = f^{\lambda\lambda'}$
and $(f*g)^\lambda = f^\lambda * g^\lambda$,
see \eqref{eq:scale} and \eqref{eq:checkvarphi}. It follows that
\begin{equation} \label{eq:checkvarphi2}
	\rho^{\epsilon_{n+1}} - \rho^{\epsilon_n}
	= (\rho^{\frac{1}{2}} - \rho)^{\epsilon_n} 
	= \hat\varphi^{\epsilon_n} * \check{\varphi}^{\epsilon_n} \,.
\end{equation}
This will allow us to compare efficiently convolutions with $\rho^{\epsilon_{n+1}}$ and $\rho^{\epsilon_n}$.

We are ready to define a sequence of distributions
that will be shown to converge to a limiting
distribution $f\in\cD'$ which fulfills \eqref{eq:reco+1}.
To motivate the definition, note that
for any distribution $\xi \in \cD'$ and test function $\psi\in\cD$,
by \Cref{lem:sequential}, we have
\begin{equation*}
	\xi(\psi) = \lim_{n\to\infty} \ \xi(\rho^{\epsilon_n} * \psi)
	= \lim_{n\to\infty} \ \int_{\R^d} \xi(\rho^{\epsilon_n}_z) \, 
	\psi\, (z) \d z \,,
\end{equation*}
where we applied \eqref{eq:Tonconv}.
When we have a germ $F = (F_x)_{x\in\R^d}$ instead of a fixed distribution $\xi$,
a natural idea is to  replace
$\xi(\rho^{\epsilon_n}_z)$ by $F_z(\rho^{\epsilon_n}_z)$.
This leads to:

\begin{definition}[Approximating distributions]\label{def:fn}
Given a germ $F = (F_x)_{x\in\R^d}$,
for $n\in\N$ we define $f_n \in \cD'$ as follows:
\begin{equation} \label{eq:fn}
	f_n(\psi) := \int_{\R^d} F_z(\rho^{\epsilon_n}_z) \, 
	\psi\, (z) \d z \,, \qquad \psi \in \cD \,.
\end{equation}
\end{definition}

\begin{remark}
We recall that, by \Cref{def:germs}, the map $z\mapsto F_z(\rho^\epsilon)$ is measurable. 
Since the map
$z\mapsto\rho^{\epsilon}_z\in\cD$ is continuous, it follows that
the map $(z,y) \mapsto F_z(\rho_y^\epsilon)$ is jointly measurable
as pointwise limit of measurable maps:
$F_z(\rho_y^\epsilon) = \lim_{n\to\infty} F_z(\rho_{\lfloor ny \rfloor/n}^\epsilon)$,
where $\lfloor x \rfloor := (\lfloor x_1\rfloor,\ldots,\lfloor x_d\rfloor)$ and
$\lfloor a \rfloor:=\max\{n\in\Z: \ z \le a\}$ is the integer part of $a\in\R$.
In particular, $z\mapsto F_z(\rho^{\epsilon_n}_z)$ is measurable.
\end{remark}

\subsubsection*{Step 2. Decomposition}

Let us look closer at $f_n(\psi)$ in \eqref{eq:fn}.
We start with a telescopic sum:
\begin{equation}\label{eq:telesco}
	f_n(\psi) = f_1(\psi) + \sum_{k=1}^{n-1}
	g_k(\psi) \qquad \text{where} \qquad
	g_k(\psi) := f_{k+1}(\psi) - f_k(\psi) \,.
\end{equation}
We can write $g_k(\psi)
= \int_{\R^d} F_z(\rho^{\epsilon_{k+1}}_z - \rho^{\epsilon_k}_z) \, \psi(z)  \d z$
by \eqref{eq:fn} and then $F_z(\rho^{\epsilon_{k+1}}_z - \rho^{\epsilon_k}_z)
= \int_{\R^d} F_z(\hat\varphi^{\epsilon_k}_y) \, \check\varphi^{\epsilon_k}_z(y) \d y$,
by \eqref{eq:checkvarphi2}  and \eqref{eq:Tonconv},
which leads to the fundamental expression
\begin{equation*} 
\begin{split}
	g_k(\psi) = \int_{\R^d} \int_{\R^d} 
	F_z(\hat\varphi^{\epsilon_k}_y) \, \check\varphi^{\epsilon_k}(y-z) \,  \psi (z)
	\d y \d z \,.
\end{split}
\end{equation*}
If we write $F_z = F_y + (F_z-F_y)$ inside the last integral, we can decompose
\begin{equation}\label{eq:gkdeco}
\begin{split}
	g_k(\psi) & = \underbrace{\int_{\R^d} \int_{\R^d} 
	F_y(\hat\varphi^{\epsilon_k}_y) \, \check\varphi^{\epsilon_k}(y-z) \,  \psi (z)
	\d y \d z}_{g_k'(\psi)} \\
	& \qquad + \underbrace{\int_{\R^d} \int_{\R^d} 
	(F_z-F_y)(\hat\varphi^{\epsilon_k}_y) \, \check\varphi^{\epsilon_k}(y-z) \,  \psi (z)
	\d y \d z}_{g_k''(\psi)} \,.
\end{split}
\end{equation}
When we plug this into \eqref{eq:telesco}, we can write
\begin{gather}\label{eq:splitfn}
	f_n(\psi) = f_1(\psi) + f_n'(\psi) + f_n''(\psi) \,, \\
	\label{eq:fn'''}
	\text{where} \qquad
	f_n'(\psi) := \sum_{k=1}^{n-1} g_k'(\psi) \,, \qquad
	f_n''(\psi) := \sum_{k=1}^{n-1} g_k''(\psi) \,.
\end{gather}
In the next steps we proceed as follows. Recall that we  fixed
a compact set $K\subseteq\R^d$.
\begin{itemize}
\item In Step~3 we show that
\begin{equation} \label{eq:flim1}
	\forall \gamma \in \R: \quad
	f'(\psi) := \lim_{n\to\infty} f'_n(\psi) \quad \text{exists} \ \ \forall \, \psi \in \cD(\bar K_1)\, .
	\end{equation}
\item In Step~4 we show that
\begin{equation} \label{eq:flim2}
	\forall \gamma > 0: \quad
	f''(\psi) := \lim_{n\to\infty} f''_n(\psi) 
	\quad \text{exists} \ \ \forall \, \psi \in \cD(\bar K_1)\,.
\end{equation}
Then if $\gamma > 0$ the limit 
$f^K(\psi) := \lim_{n\to\infty} f_n(\psi)$ exists 
for $\psi \in \cD(\bar K_1)$ and equals
\begin{equation} \label{eq:flast}
	f^K(\psi)\ = \ f_1(\psi) + f'(\psi) + f''(\psi) \,, \qquad \psi \in \cD(\bar K_1) \,.
\end{equation}

\item 
In Step~5 we show that $f^K$ is a distribution on $\bar K_1$
which satisfies
\begin{equation}
	\label{eq:reco+1proof}
\begin{gathered}
	\forall \gamma > 0: \qquad
	| ( f^K - F_x)(\psi^\lambda_x) |
	\le \mathfrak{c} \,
	\vertiii{F}^\co_{\bar K_{3/2}, \varphi, \alpha,\gamma} \ \lambda^\gamma \\
	\phantom{\forall \gamma > 0: \qquad}
	\text{uniformly } \text{for } \psi \in \cB_r, \ x \in K, \ \lambda \in (0,1] \,,
\end{gathered}
\end{equation}
where the constant $\mathfrak{c}
= \mathfrak{c}_{\alpha, \gamma, r, d, \varphi}$ is given in \eqref{eq:frakc} below.

We stress that \emph{in principle $f^K(\psi)$ depends on the chosen compact set $K$},
because $f_n(\psi)$ depends on $\hat\varphi = \hat\varphi^{[r]}$,
see \eqref{eq:fn} and \eqref{eq:rhoepsilon}, and \emph{the value of $r$ depends on $K$
through $\alpha = \alpha_{\bar K_{3/2}}$, $\beta = \beta_{\bar K_{3/2}}$}, 
see \eqref{eq:erre} and \eqref{eq:alphabeta}.
In the special case when $\alpha_K = \alpha$ and $\beta_K = \beta$ for every $K$
(i.e.\ the germ $F$ is $(\alpha,\gamma)$-coherent with 
global homogeneity bound $\beta$),
then $f^K(\psi) = f(\psi)$ does not depend on $K$ and the proof is completed, because $f$
satisfies \eqref{eq:reco+1} in virtue of \eqref{eq:reco+1proof}.
In the general case, a small extra step is needed to complete the proof.

\item In Step~6 we show that for $\gamma > 0$ the distributions $f^K$ are consistent, i.e.
\begin{equation}\label{eq:consistent}
	\text{for $K \subseteq K'$:} \qquad
	f^K(\psi) = f^{K'}(\psi) \qquad \forall \psi \in \cD(\bar K_1) \,.
\end{equation}
This property lets us define a \emph{global} distribution
$f\in\cD'$ which satisfies \eqref{eq:reco+1}, thanks to  \eqref{eq:reco+1proof}. 
This concludes the proof for $\gamma > 0$.
\end{itemize}

\subsubsection*{Step 3. Proof of \eqref{eq:flim1} for $\gamma\in\R$}

By \eqref{eq:fn'''}, to prove \eqref{eq:flim1} it suffices to show that
\begin{equation} \label{eq:seriesgoalnew}
	\text{for all } \gamma \in \R: \qquad
	\sum_{k=1}^\infty |g'_{k}(\psi)| < \infty \,,
	\qquad \forall \, \psi \in \cD(\bar{K}_1) \,.
\end{equation}
Recall that
\begin{equation*}
\begin{split}
	g_k'(\psi) & = \int_{\R^d} \int_{\R^d} 
	F_y(\hat\varphi^{\epsilon_k}_y) \, \check\varphi^{\epsilon_k}(y-z) \,  \psi (z)
	\d y \d z 
	 = \int_{\R^d} 
	F_y(\hat\varphi^{\epsilon_k}_y) \, \check\varphi^{\epsilon_k}*  \psi (y)
	\d y \, .
	\end{split}
\end{equation*}
Note that $\check{\varphi} = \hat\varphi^{\frac{1}{2}} - \hat\varphi^2$ is supported in $B(0,1)$,
because $\hat\varphi$ is supported in $B(0,\frac{1}{2})$. 
Since $\psi$ is supported by $\bar{K}_1$ and $\check\varphi^{\epsilon_k}$ by $B(0,\epsilon_k)$ with $\epsilon_k\leq 1/2$, then $\check\varphi^{\epsilon_k}*  \psi$
is supported by $\bar K_{3/2}$. Then
\[
	|g_k'(\psi)| \leq \|\check\varphi^{\epsilon_k}*  \psi\|_{L^1}\, \sup_{y\in \bar K_{3/2}}|
	F_y(\hat\varphi^{\epsilon_k}_y) |\,.
\]
By \eqref{neq:conclusio} we have the bound 
\[
\|\check\varphi^{\epsilon_k}*  \psi\|_{L^1} \le \mathrm{Vol}(\bar K_{3/2})\, \|\psi\|_{C^r} \,\epsilon_k^r \,
	\|\check\varphi\|_{L^1}\, .
\]
By \eqref{eq:hyp2}, for all $y\in\bar K_{3/2}$ 
we have the bound $| F_y(\hat\varphi^{\epsilon}_y) | \le \hat C_2 \, \epsilon^{\beta}$. 
Then we obtain
\begin{equation}\label{eq:differenza}
	|g_k'(\psi)| 
	\le \big\{ \hat C_2 \, \mathrm{Vol}(\bar K_{3/2}) \,
	\|\check{\varphi}\|_{L^1} \,
	\|\psi\|_{C^r} \big\} \,  \epsilon_k^{{\beta} + r} \,.
\end{equation}
Since $\epsilon_k = 2^{-k}$ and ${\beta}+r >0$ by assumption, see \eqref{eq:erre},
we have $\sum_{k=1}^\infty |g_k'(\psi)| < \infty$ which completes
the proof of \eqref{eq:seriesgoalnew}.

\subsubsection*{Step 4. Proof of \eqref{eq:flim2} for $\gamma>0$}
By \eqref{eq:fn'''}, to prove \eqref{eq:flim2} it suffices to show that
\begin{equation} \label{eq:seriesgoalnew2}
	\text{if } \gamma > 0: \qquad	\sum_{k=1}^\infty |g''_{k}(\psi)| < \infty \,,
	\qquad \forall \, \psi \in \cD(\bar{K}_1) \,.
\end{equation}
Recall that
\begin{equation}\label{eq:gk''}
	g_k''(\psi) = \int_{\R^d} \int_{\R^d} 
	(F_z-F_y)(\hat\varphi^{\epsilon_k}_y) \, \check\varphi^{\epsilon_k}(y-z) \,  \psi (z)
	\d y \d z \,.
\end{equation}
We recall that $\check\varphi^{\epsilon_k}$
is supported in $B(0,\epsilon_k)$, so that
\[
	|g_k''(\psi)| \leq \|\check\varphi^{\epsilon_k}\|_{L^1}\, \|\psi \|_{L^1}\, 
	\sup_{z\in \bar K_1, |y-z|\leq \epsilon_k}
	|(F_z-F_y)(\hat\varphi^{\epsilon_k}_y)|\, ,
\]
with $\epsilon_k \le 1/2$ since $k\ge 1$.
Then \eqref{eq:hyp1} gives
\begin{equation*}
\sup_{z\in \bar K_1, |y-z|\leq \epsilon_k}	|(F_z-F_y)(\hat\varphi^{\epsilon_k}_y)|
	\le \hat C_1 \, \epsilon_k^\alpha \, (2 \epsilon_k)^{\gamma-\alpha} \,,
\end{equation*}
hence from \eqref{eq:gk''} we obtain
$|g_k''(\psi)| \le 2^{\gamma-\alpha} \, \hat C_1 \, \epsilon_k^\gamma \,\,
\| \check\varphi^{\epsilon_k} \|_{L^1} \, \| \psi\|_{L^1}$.
We finally observe that $\| \check\varphi^{\epsilon_k} \|_{L^1}
= \| \check\varphi \|_{L^1}$
by \eqref{eq:scale}. This gives the bound
\begin{equation} \label{eq:gk''est}
	|g_k''(\psi)| \le \big\{ 2^{\gamma-\alpha} \, \hat C_1\,
	\| \check\varphi \|_{L^1} \, \| \psi\|_{L^1} \big\}  \, \epsilon_k^\gamma \,.
\end{equation}
Since $\gamma > 0$ and $\epsilon_k = 2^{-k}$,
we obtain $\sum_{k=1}^\infty |g_k''(\psi)| < \infty$,
proving \eqref{eq:seriesgoalnew2}.

\subsubsection*{Step 5. Proof of \eqref{eq:reco+1proof}}

We showed in the previous
steps that both $f_n'(\psi)$ and 
$f_n''(\psi)$ converge for $\gamma > 0$.
Recalling \eqref{eq:splitfn}, we have that $f_n(\psi)$
converges to $f^K(\psi)$ given by \eqref{eq:flast}, i.e.\
\begin{equation*}
	f^K(\psi) = f_1(\psi) + \sum_{k=1}^\infty g_k'(\psi) + \sum_{k=1}^\infty  g_k''(\psi) \,.
\end{equation*}

\begin{remark}
By \eqref{eq:differenza} and \eqref{eq:gk''est}
there is $C = C_{K,\gamma,{\beta},r,\hat\varphi} < \infty$ such that
\begin{equation*}
	|f^K(\psi)| \le C \big\{ \|\psi\|_{L^1} + \|\psi\|_{C^r}\big\}
	\le C \{ \mathrm{Vol}(\bar K_{3/2}) + 1 \} \, \|\psi\|_{C^r}
	\qquad \text{for } \psi \in \cD(\bar{K}_1) \,.
\end{equation*}
This shows that $f^K \in \cD'(\bar{K}_1)$ is indeed a distribution on $\bar{K}_1$,
see \eqref{eq:order}.
\end{remark}

We now prove that $f^K(\cdot)$ satisfies 
\eqref{eq:reco+1proof}.
We fix a point $x\in K$ and define
\begin{equation*}
\begin{split}
	\tilde f(\psi) & := f^K(\psi)-F_x(\psi) \,,
	\qquad \psi\in\cD(\bar{K}_1).
\end{split}
\end{equation*}
We also define $\tilde f_n(\psi)$ similarly to $f_n(\psi)$ in \eqref{eq:fn},
just replacing $F_z$ by $F_z - F_x$:
\begin{equation}\label{neq:tildefn}
\tilde f_n(\psi):=	
	\int_{\R^d} (F_z - F_x)(\rho^{\epsilon_n}_z) \, 
	\psi\, (z) \d z
	\ = \ f_n(\psi) - F_x(\rho^{\epsilon_n} * \psi) \,,
\end{equation}
having used \eqref{eq:Tonconv}.
Since $F_x(\rho^{\epsilon_n} * \psi) \to F_x(\psi)$
by \Cref{th:molli} and \Cref{lem:sequential}, we have
\begin{equation} \label{eq:tildeflim}
	\tilde f(\psi)= \lim_{n\to\infty} \tilde f_n(\psi) \,.
\end{equation}

We now fix $\lambda \in (0,1]$ and  define
\begin{equation}\label{eq:Nlambda}
	N=N_{\lambda} := \min\{k\in\N: \ \epsilon_k \leq \lambda\}, 
\end{equation}
so that $N\geq 1$ and in particular
\begin{equation}\label{eq:epsla}
	\epsilon_{N} \leq \lambda < \epsilon_{{N}-1} = 2\epsilon_{N} \,.
\end{equation}
Let us now fix $\psi\in\cB_r$, see \eqref{eq:cBr}. 
By the triangle inequality we can bound
\begin{equation}\label{neq:ffN}
\begin{split}
	|\tilde f(\psi_x^\lambda)| 
	& \le |\tilde f_{N}(\psi_x^\lambda) |
	+ |(\tilde f-\tilde f_{N})(\psi_x^\lambda)| \,.
\end{split}
\end{equation}
We will estimate separately the two terms in the right-hand side.

\medskip
\noindent
\emph{First term in \eqref{neq:ffN}.}
By \eqref{neq:tildefn}, recalling \eqref{eq:rhoepsilon} and \eqref{eq:Tonconv}, 
we can write
\begin{equation} \label{eq:tildefN}
	\tilde f_N(\psi_x^\lambda) = 
	\int_{\R^d} \int_{\R^d} (F_z - F_x)(\hat\varphi^{\epsilon_N}_y) \, 
	\hat{\varphi}^{2\epsilon_N}(y-z) \,
	\psi_x^\lambda(z) \d y \d z \,.
\end{equation}
This integral is similar to \eqref{eq:gk''} and
we argue as in the proof of \eqref{eq:gk''est}. Recall that $\hat\varphi$ 
has support in $B(0,\frac{1}{2})$. Then
$\hat\varphi^{2\epsilon_N}$
has support in $B(0,\epsilon_N)$ and we may assume that $|y-z| \le \epsilon_N \le \frac{1}{2}$
in the right-hand side of \eqref{eq:tildefN}. Since $\psi_x^\lambda$ 
is supported in $B(x,\lambda)\subset\bar{K}_1$, we can assume that
$|z-x|\leq\lambda$, hence $z\in\bar K_1$ and $y\in\bar K_{3/2}$. Then
\[
	|\tilde f_N(\psi_x^\lambda)| \leq \|\hat{\varphi}^{2\epsilon_N}\|_{L^1}\, 
	\|\psi_x^\lambda\|_{L^1}\, 
	\sup_{z\in B(x,\lambda), |y-z| \le \epsilon_N} |(F_z - F_x)(\hat\varphi^{\epsilon_N}_y)|.
\]
By the triangle inequality $|(F_z-F_x)(\hat\varphi^{\epsilon_N}_y)|\leq 
|(F_z-F_y)(\hat\varphi^{\epsilon_N}_y)| +|(F_y-F_x)(\hat\varphi^{\epsilon_N}_y)|$,
and since \eqref{eq:hyp1} and \eqref{eq:epsla} give
\begin{gather*}
	\sup_{z\in B(x,\lambda), |y-z| \le \epsilon_N}	|(F_z-F_y)(\hat\varphi^{\epsilon_N}_y)|
	\le \hat C_1 \, \epsilon_N^\alpha \, (2 \epsilon_N)^{\gamma-\alpha}\leq 
	\hat C_1 \, 2^{\gamma-\alpha} \,
	\lambda^\gamma \,, \\
	\sup_{z\in B(x,\lambda), |y-z| \le \epsilon_N}	|(F_y-F_x)(\hat\varphi^{\epsilon_N}_y)|
	\le \hat C_1 \, \epsilon_N^\alpha \, (\lambda+ 2\epsilon_N)^{\gamma-\alpha} 
	\le \hat C_1 \, 4^{\gamma-\alpha} \,
	\lambda^\gamma \,,
\end{gather*}
we obtain 
\[
	|\tilde f_N(\psi_x^\lambda)| \le 2\cdot 4^{\gamma-\alpha} \, \hat C_1 \, \lambda^\gamma \,
	\| \hat\varphi^{2\epsilon_N} \|_{L^1} \, \| \psi_x^\lambda\|_{L^1}\, .
\]
We can easily bound $\|\psi_x^\lambda\|_{L^1} \le2^d$ for $\psi \in \cB_r$, see  \eqref{eq:psiBr},
and $\| \hat\varphi^{2\epsilon_N} \|_{L^1} = \| \hat\varphi \|_{L^1}$. All this yields 
the following estimate for the first term $|\tilde f_N(\psi_x^\lambda)|$
in \eqref{neq:ffN}
\begin{equation} \label{eq:st1ep}
\begin{split}
	|\tilde f_N(\psi_x^\lambda)|
	& \le \{4^{\gamma-\alpha} \, 2^{d+1} \} \,
	\|\hat{\varphi}\|_{L^1} \, \hat C_1 \,\lambda^{\gamma} \,.
\end{split}
\end{equation}

\medskip
\noindent
\emph{Second term in \eqref{neq:ffN}.}
Next we bound, by \eqref{eq:tildeflim},
\begin{equation}\label{eq:drasticbound}
	|(\tilde f-\tilde f_{N})(\psi_x^\lambda)| \le
	\sum_{k\ge N} |(\tilde f_{k+1} - \tilde f_k)(\psi_x^\lambda)| \,.
\end{equation}
Recalling \eqref{neq:tildefn} and \eqref{eq:splitfn}-\eqref{eq:fn'''}, we 
can write
\begin{equation} \label{eq:nede}
\begin{split}
	(\tilde f_{k+1} - \tilde f_{k})(\psi_x^\lambda)
	&= (f_{k+1}- f_{k})(\psi_x^\lambda)
	- F_x\big( (\rho^{\epsilon_{k+1}} - \rho^{\epsilon_k})*\psi_x^\lambda \big) \\
	& = \underbrace{\,g'_k(\psi_x^\lambda) - 
	F_x\big( (\rho^{\epsilon_{k+1}} - \rho^{\epsilon_k})*\psi_x^\lambda \big)\,}_{A_k^\lambda}
	\,+\, \underbrace{\,g''_k(\psi_x^\lambda)\,}_{B_k^\lambda} \,.
\end{split}
\end{equation}
We now look at $A_k^\lambda$ and $B_k^\lambda$.
The estimates for $A_k^\lambda$ hold for any $\gamma \in \R$ and will be useful
in \Cref{sec:reco+neg} for the case $\gamma \le 0$, hence we state
them as a separate result.

\begin{lemma}\label{th:Ak}
Define $A_k^\lambda$ as in \eqref{eq:nede}. For any $\gamma \in \R$ we have
\begin{equation} \label{eq:Alambdak}
	|A^\lambda_k| \le 4^{d+\gamma-\alpha} \, \hat C_1 \,
	\|\check{\varphi}\|_{L^1} \cdot
	\begin{cases}
	\lambda^{\gamma - \alpha - r}\, 
	\epsilon_k^{\alpha+r}  & \text{if } \epsilon_k < \lambda \\
	\epsilon_k^{\gamma} & \text{if } \epsilon_k \ge \lambda
	\end{cases} \,,
\end{equation}
and for $N = N_\lambda$ in \eqref{eq:Nlambda} we have
\begin{equation}\label{eq:tobeusedlater}
	\sum_{k\ge N} |A^\lambda_k|
	\le \frac{4^{d+\gamma-\alpha}}{1-2^{-\alpha-r}} \, \hat C_1 \,
	\|\check{\varphi}\|_{L^1} \, \lambda^{\gamma}\,.
\end{equation}
\end{lemma}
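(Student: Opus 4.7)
The plan is to rewrite $A_k^\lambda$ as a single integral in $y$ and then apply the convolution estimate \eqref{eq:Ge2} with a pointwise control on the integrand coming from coherence. First, using the key identity \eqref{eq:checkvarphi2}, namely $\rho^{\epsilon_{k+1}} - \rho^{\epsilon_k} = \hat\varphi^{\epsilon_k} * \check\varphi^{\epsilon_k}$, together with \eqref{eq:Tonconv} applied to $T = F_x$, I would rewrite
\begin{equation*}
F_x\big((\rho^{\epsilon_{k+1}} - \rho^{\epsilon_k}) * \psi_x^\lambda\big)
= \int_{\R^d} F_x(\hat\varphi^{\epsilon_k}_y) \, (\check\varphi^{\epsilon_k} * \psi_x^\lambda)(y) \d y.
\end{equation*}
Integrating out $z$ in the expression for $g_k'(\psi_x^\lambda)$ from \eqref{eq:gkdeco} yields the parallel formula $g_k'(\psi_x^\lambda) = \int F_y(\hat\varphi^{\epsilon_k}_y) \, (\check\varphi^{\epsilon_k} * \psi_x^\lambda)(y) \d y$, and subtracting the two gives the clean single-integral representation
\begin{equation*}
A_k^\lambda = \int_{\R^d} (F_y - F_x)(\hat\varphi^{\epsilon_k}_y) \, (\check\varphi^{\epsilon_k} * \psi_x^\lambda)(y) \d y.
\end{equation*}

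Next I would apply \eqref{eq:Ge2} to this integral with $G(y) := (F_y - F_x)(\hat\varphi^{\epsilon_k}_y)$. On $B(x, \lambda + \epsilon_k)$ the function $G$ is controlled by the coherence estimate \eqref{eq:hyp1}: since $x \in K$, $\lambda \le 1$, $\epsilon_k \le 1/2$, every such $y$ lies in $\bar K_{3/2}$ with $|y - x| \le \lambda + \epsilon_k \le 2$, so
\begin{equation*}
|G(y)| \le \hat C_1 \, \epsilon_k^\alpha \, (|y-x| + \epsilon_k)^{\gamma-\alpha} \le \hat C_1 \, \epsilon_k^\alpha \, (\lambda + 2\epsilon_k)^{\gamma-\alpha}.
\end{equation*}
Since $\gamma - \alpha \ge 0$ and $\lambda + 2\epsilon_k \le 4 \max(\lambda, \epsilon_k)$, the right-hand side is bounded by $4^{\gamma-\alpha}\hat C_1 \epsilon_k^\gamma$ if $\epsilon_k \ge \lambda$ and by $4^{\gamma-\alpha}\hat C_1 \epsilon_k^\alpha \lambda^{\gamma-\alpha}$ if $\epsilon_k < \lambda$. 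Combined with the multiplier $4^d \|\check\varphi\|_{L^1} \min\{\epsilon_k/\lambda, 1\}^r$ from \eqref{eq:Ge2}, which equals $4^d \|\check\varphi\|_{L^1}$ in the first regime and $4^d \|\check\varphi\|_{L^1} (\epsilon_k/\lambda)^r$ in the second, this produces exactly \eqref{eq:Alambdak}.

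Finally, to deduce \eqref{eq:tobeusedlater} I observe that for $k \ge N = N_\lambda$ we have $\epsilon_k \le \epsilon_N \le \lambda$ by \eqref{eq:epsla}, so only the first branch of \eqref{eq:Alambdak} is active. Summing the resulting geometric series,
\begin{equation*}
\sum_{k \ge N} \epsilon_k^{\alpha + r} = \frac{\epsilon_N^{\alpha + r}}{1 - 2^{-\alpha - r}} \le \frac{\lambda^{\alpha + r}}{1 - 2^{-\alpha - r}},
\end{equation*}
where both the convergence of the series and the bound $\epsilon_N^{\alpha+r} \le \lambda^{\alpha+r}$ rely on the standing assumption $\alpha + r > 0$ of \eqref{eq:erre}. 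Multiplying by $\lambda^{\gamma-\alpha-r}$ converts $\lambda^{\alpha+r}$ into $\lambda^\gamma$ and yields \eqref{eq:tobeusedlater}. The only genuinely non-trivial step is the first one: packaging $A_k^\lambda$ as a single integral of the coherence-style quantity $(F_y - F_x)(\hat\varphi^{\epsilon_k}_y)$ against the kernel $\check\varphi^{\epsilon_k} * \psi_x^\lambda$ that Lemma \ref{th:keyintest2} is designed to handle. Once this representation is in place, the bounds are essentially forced by coherence, by the convolution estimate \eqref{eq:Ge2}, and by the summability provided by $\alpha + r > 0$.
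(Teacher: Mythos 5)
Your proposal is correct and follows essentially the same route as the paper's proof: you derive the same single-integral representation $A_k^\lambda = \int (F_y - F_x)(\hat\varphi^{\epsilon_k}_y)\,(\check\varphi^{\epsilon_k}*\psi_x^\lambda)(y)\,\mathrm{d}y$ via \eqref{eq:checkvarphi2} and \eqref{eq:Tonconv}, apply \eqref{eq:Ge2} with $G(y)=(F_y-F_x)(\hat\varphi^{\epsilon_k}_y)$ controlled by the coherence bound \eqref{eq:hyp1}, and sum the geometric series using $\alpha+r>0$. The only (harmless) cosmetic difference is that you bound $\lambda+2\epsilon_k\le 4\max\{\lambda,\epsilon_k\}$ where the paper uses the factor $3^{\gamma-\alpha}$ before enlarging to $4^{\gamma-\alpha}$.
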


\begin{proof}
By \eqref{eq:Tonconv},
together with the crucial property \eqref{eq:checkvarphi2} of 
$\rho^{\epsilon_{k+1}} - \rho^{\epsilon_k}$, we can write
\begin{equation*}
\begin{split}
	F_x\big( (\rho^{\epsilon_{k+1}} - \rho^{\epsilon_k})*\psi_x^\lambda \big)
	& = \int_{\R^d} F_x( \rho^{\epsilon_{k+1}}_z - \rho^{\epsilon_k}_z ) \, \psi_x^\lambda(z) \d z  \\
	& = \int_{\R^d} \int_{\R^d} 
	F_x(\hat\varphi^{\epsilon_k}_y) \, \check\varphi^{\epsilon_k}(y-z) \,  \psi_x^\lambda (z)
	\d y \d z \,.
\end{split}
\end{equation*}
Recalling the definition \eqref{eq:gkdeco} of $g_k'$,
we obtain
\begin{equation*}
\begin{split}
	A^\lambda_k & :=\int_{\R^d} \int_{\R^d} (F_y - F_x)(\hat\varphi^{\epsilon_k}_y) \, 
	\check{\varphi}^{\epsilon_k}(y-z) \,
	\psi_x^\lambda(z) \d y \d z 
	\\ & = \int_{\R^d} (F_y - F_x)(\hat\varphi^{\epsilon_k}_y) \, 
	(\check{\varphi}^{\epsilon_k}* 
	\psi_x^\lambda)(y) \d y\, .
	\end{split}
\end{equation*}
If we define $G(y):=(F_y - F_x)(\hat\varphi^{\epsilon}_y)$, 
we see that $|A^\lambda_k|$ can be estimated as in \eqref{eq:Ge2}, which yields
\[
	|A^\lambda_k|\leq 4^d\, 
	\|\check\varphi\|_{L^1}\, 
	\min\big\{\epsilon_k/\lambda ,1\big\}^r \,
	\sup_{y \in B(x,\lambda+\epsilon_k)} 
	|(F_y - F_x)(\hat\varphi^{\epsilon_k}_y)|\, .
\]
For $y\in B(x,\lambda+\epsilon_k)$, by \eqref{eq:hyp1} we have
\begin{equation*}
\begin{split}
	| (F_x-F_y)(\hat\varphi^{\epsilon_k}_y) | \le
	\hat C_1 \, \epsilon_k^\alpha(|x-y|+\epsilon_k)^{\gamma-\alpha}
	\leq 
	\hat C_1
	\left(\lambda+2\epsilon_k\right)^{\gamma-\alpha} 
	\epsilon_k^{\alpha} \, ,
\end{split}\end{equation*}
which proves \eqref{eq:Alambdak} because $\left(\lambda+2\epsilon_k\right)^{\gamma-\alpha}
\le 3^{\gamma-\alpha} \,  
\max\{\epsilon_k, \lambda\}^{\gamma-\alpha}$.

We next turn to \eqref{eq:tobeusedlater}.
For $k\geq N$ we have $\epsilon_k\leq\lambda$, see \eqref{eq:epsla}, hence we can apply
the first line in \eqref{eq:Alambdak}.
Since $\alpha+r > 0$ by assumption, we have
\begin{equation*}
	\sum_{k\ge N}\epsilon_k^{\alpha+r}
	= \frac{\epsilon_N^{\alpha+r}}{1-2^{-\alpha-r}}
	\le \frac{\lambda^{\alpha+r}}{1-2^{-\alpha-r}} \,,
\end{equation*}
therefore from \eqref{eq:Alambdak} we obtain \eqref{eq:tobeusedlater}.
\end{proof}

We next focus on $B_k^\lambda = g''_k(\psi_x^\lambda)$ in \eqref{eq:nede}.
Recalling the definition \eqref{eq:gkdeco} of $g''_k$, we can write
\[
	B^\lambda_k:=\int_{\R^d} \int_{\R^d} (F_z - F_y)(\hat\varphi^{\epsilon_k}_y) \, 
	\check{\varphi}^{\epsilon_k}(y-z) \,
	\psi_x^\lambda(z) \d y \d z\,.
\]
Since $\psi$ and $\check\varphi$ are both supported in $B(0,1)$,
we can suppose that $|z-x|\le\lambda$ 
and $|y-z|\leq \epsilon_k \leq 1/2$ and therefore $z\in\bar K_1$, $y\in\bar K_{3/2}$. Then
\[
	|B^\lambda_k| \leq \|\check\varphi^{\epsilon_k}\|_{L^1}\, \|\psi_x^\lambda \|_{L^1}\, 
	\sup_{z\in \bar K_1, |y-z|\leq \epsilon_k}
	|(F_z-F_y)(\hat\varphi^{\epsilon_k}_y)|\, .
\]
By \eqref{eq:hyp1}  we have the bound
 \[
 	\sup_{z\in \bar K_1, |y-z|\leq \epsilon_k}|(F_z-F_y)(\hat\varphi^{\epsilon_k}_y)|
	\le \hat C_1 \, \epsilon_k^\alpha \, (2\epsilon_k)^{\gamma-\alpha} \,,
\]
and therefore, since $\|\psi_x^\lambda\|_{L^1} \le2^d$ for $\psi \in \cB_r$ by \eqref{eq:psiBr}, 
\begin{equation*}
	|B^\lambda_k| \le 2^{\gamma-\alpha} \, 2^{d}\, \hat C_1 \,
	\| \check\varphi \|_{L^1} \, \epsilon_k^\gamma\,.
\end{equation*}
Note now that $\gamma>0$ here, so that 
\[
	\sum_{k\ge N}\epsilon_k^{\gamma}
	= \frac{\epsilon_N^{\gamma}}{1-2^{-\gamma}} \le
	\frac{\lambda^{\gamma}}{1-2^{-\gamma}} \,,
\]
which yields
\begin{equation} \label{eq:sumb}
	\sum_{k\ge N} |B_k^\lambda| \le 
	\frac{2^{\gamma-\alpha} \, 2^{d}}{1-2^{-\gamma}} \, \hat C_1 \,
	\| \check\varphi \|_{L^1} \, 
	\lambda^{\gamma} \,.
\end{equation}

Recalling \eqref{eq:drasticbound} and \eqref{eq:nede}, we obtain from
\eqref{eq:tobeusedlater} and \eqref{eq:sumb} the desired estimate for
the second term in \eqref{neq:ffN}:
\begin{equation} \label{eq:st2ep}
	|\big(\tilde f-\tilde f_{N}\big)(\psi_x^\lambda)|
	\le \frac{ 2  \cdot 4^{d+\gamma-\alpha}}{1-2^{-\gamma\wedge(\alpha+r)}} 
	\, \hat C_1 \, \|\check{\varphi}\|_{L^1} \, \lambda^{\gamma} \,.
\end{equation}

\medskip
\noindent
\emph{Conclusion}.
At last, we can gather \eqref{neq:ffN}, \eqref{eq:st1ep} and \eqref{eq:st2ep}.
We estimate $\|\check{\varphi}\|_{L^1} \le 2 \, \|\hat{\varphi}\|_{L^1}$ by \eqref{eq:diffconv}, to get
\begin{equation*}
	|(f^K-F_x)(\psi_x^\lambda)| \le 
	\frac{2\cdot 4^{d+1+\gamma-\alpha}}{1-2^{-\gamma\wedge(\alpha+r)}} \,
	\hat C_1 \, \|\hat{\varphi}\|_{L^1} \, \lambda^{\gamma} \,.
\end{equation*}
If we estimate $\|\hat{\varphi}\|_{L^1}$ 
using \eqref{eq:boundhatvarphi}
and $\hat C_1$ 
using \eqref{eq:controlC12}, we obtain
\begin{equation}\label{eq:phiC} 
	\hat C_1\, \| \hat\varphi \|_{L^1} \le \big(\tfrac{e^2}{|\int \varphi|} \, r\big)^2
	\big(\tfrac{2^{-r-1}}{1+R_\varphi}\big)^{\alpha} \, \|\varphi\|_{L^1} \, 
	\vertiii{F}^\co_{\bar K_{3/2}, \varphi, \alpha, \gamma} \,.
\end{equation}
If we bound $e \le 4$ for simplicity, we obtain finally \eqref{eq:reco+1proof} with 
\begin{equation}\label{eq:frakc}
	{\mathfrak c}= \mathfrak{c}_{\alpha, \gamma, r, d, \varphi} =
	\frac{r^2\,2^{(r+1)\alpha} \, 4^{d+\gamma-\alpha+6}}
	{1-2^{-\gamma\wedge(\alpha+r)}} \,
	\frac{\|\varphi\|_{L^1} \, (1+R_\varphi)^{-\alpha}}{|\int \varphi|^2}
	\qquad \text{(for $\gamma > 0$)} 
\end{equation}
where $R_\varphi$ is the radius of a ball $B(0,R_\varphi)$ which contains the support of $\varphi$.

\subsubsection*{Step 6. Proof of \eqref{eq:consistent}}
We finally show that the distributions $f^K$ built in the previous steps are consistent, namely
for $K \subseteq K'$ and
for all test functions $\psi \in \cD(\bar{K}_1)$ that are supported in $\bar{K}_1$
 we have $f^{K'}(\psi) = f^K(\psi)$.
This is an immediate consequence of \Cref{th:identi},
because if we fix any $\xi \in \cD(\bar{K}_1)$ with $\int\xi \ne 0$
it follows by \eqref{eq:reco+1proof} with $\psi = \xi$ that both $f^K$ and $f^{K'}$
satisfy \eqref{eq:pre-coherent} with $\varphi = \xi$ on the
compact set $\bar{K}_1$.

We can finally define a \emph{global} distribution $f\in\cD'$:
given any test function $\psi \in \cD$, we pick a compact set $K$ large enough
so that $\psi \in \cD(\bar{K}_1)$ and we define $f(\psi) := f^K(\psi)$
(this is well-posed thanks
to the consistency relation \eqref{eq:consistent} that we have just proved).
Then, for any compact set $K$, we can replace $f^K$ by $f$ in \eqref{eq:reco+1proof},
which shows that $f$ satisfies \eqref{eq:reco+1}.
This completes the proof of \Cref{th:reco+} for $\gamma > 0$.\qed

\section{Proof of the Reconstruction Theorem for $\gamma \le 0$}
\label{sec:reco+neg}

In this section we prove \Cref{th:reco+} when $\gamma \le 0$.
We stress that we do not have a unique choice for the reconstruction $\cR F$,
because relation \eqref{eq:reco+1} for $\gamma \le 0$ does not characterize $f$ uniquely,
see \Cref{th:identi} above and \Cref{rem:well-def} below. 

Henceforth we fix a germ $F = (F_x)_{x\in\R^d}$ which is $\gamma$-coherent with $\gamma \le 0$.
In order to find a correct choice of $\cR F$,
we start following the proof of the case $\gamma > 0$, see \Cref{sec:reco+}.
We fix a compact set $K \subseteq \R^d$ and we fix $\alpha,\beta,r$ as in \eqref{eq:alphabeta}-\eqref{eq:erre}. 
The key problem when $\gamma \le 0$ is that the sequence of approximating distributions $f_n$
that we defined in \eqref{eq:fn} will typically \emph{not} converge,
hence we can no longer define $f^K := \lim_{n\to\infty} f_n$.
More precisely, if we recall the decomposition
\begin{equation*}
	f_n(\psi) = f_1(\psi) + f_n'(\psi) + f_n''(\psi) \,, \qquad \psi\in\cD(\bar K_1)\, ,
\end{equation*}
see \eqref{eq:telesco}-\eqref{eq:fn'''}, then it is the term $f''_n(\psi)$ which can fail to converge 
for $\gamma \le 0$, since
the proof in Step 4 was based on \eqref{eq:gk''est} 
and exploited $\gamma>0$.
On the other hand, we showed that $f'(\psi) := \lim_{n\to\infty} f'_n(\psi)$ exists for every
$\gamma \in \R$, see \eqref{eq:flim1}. 

Therefore, for $\gamma \le 0$ the idea is to suppress $f''_n(\psi)$. Recalling \eqref{eq:fn'''},
we thus set
\begin{equation} \label{eq:fle0}
\begin{split}
	f^K(\psi) & = f_1(\psi) + f'_1(\psi) =
	f_1(\psi) + \sum_{k=1}^\infty g_k'(\psi) \,, 
	\qquad \psi\in\cD(\bar K_1) \,.
\end{split}
\end{equation}
We complete the proof in two steps, that we now describe.
\begin{itemize}
\item In Step I we show that $f^K \in \cD'(\bar{K}_1)$
is a distribution on $\bar{K}_1$ which satisfies 
\begin{equation}
	\label{eq:reco+1proof2<}
	\begin{gathered}
	| ( f^K - F_x)(\psi^\lambda_x) |
	\le {\mathfrak{C}} \,
	\vertiii{F}^\co_{\bar K_{3/2}, \varphi, \alpha,\gamma} \cdot 
	\begin{cases}
	\lambda^\gamma & \text{if } \gamma < 0 \\
	\big(1+|\log \lambda|\big) & \text{if } \gamma = 0
	\end{cases} \\
	\text{uniformly } \text{for } \psi \in \cB_r, \ x \in K, \ \lambda \in (0,1] \,,
	\end{gathered}
\end{equation}
for a suitable $\mathfrak{C}$ given below, see \eqref{eq:unfr} for $\gamma < 0$
and \eqref{eq:unfr2} for $\gamma = 0$.
\end{itemize}

\begin{remark}
\emph{We stress that in general $f^K(\psi)$ depends on the compact set $K$}.
Indeed, $f_1(\psi)$ and $g_k'(\psi)$ depend on $\hat\varphi = \hat\varphi^{[r]}$
and the value of $r > \max\{-\alpha,-\beta\} := \max\{-\alpha_{\bar{K}_{3/2}},-\beta_{\bar{K}_{3/2}}\}$ 
is a function of $K$, see \eqref{eq:erre} and \eqref{eq:alphabeta}.
\end{remark}

We first consider the special case when 
\emph{the germ $F$ is $(\alpha,\gamma)$-coherent with 
global homogeneity bound $\beta$},
that is when $\alpha_K=\alpha$ and $\beta_K=\beta$ for every compact set $K$.
Then we can choose a fixed $r > \max\{-\alpha,-\beta\}$ and
$f^K(\psi) = f(\psi)$ does not depend on $K$, hence
replacing $f^K$ by $f$ in \eqref{eq:reco+1proof2<} we obtain precisely \eqref{eq:reco+1}.

It remains to show that the map $F \mapsto f =: \cR F$ is linear
(we recall that the family of $(\alpha,\gamma)$-coherent germs
with global homogeneity bound $\beta$ is a vector space, see \Cref{rem:possfin}).
This follows easily by the definition \eqref{eq:fle0} of $f^K = f$,
because both $f_1$ and $g'_k$ are linear functions of $F$, see \eqref{eq:fn}
and \eqref{eq:gkdeco}.
We have thus completed the proof of \Cref{th:reco+} for $\gamma \le 0$ in this special case.

We finally go back to the general case when $\alpha_K$ and $\beta_K$ may depend
on $K$, hence $f^K$ also depends on $K$.
We complete the proof of \Cref{th:reco+} for $\gamma \le 0$ as follows.
\begin{itemize}
\item In Step II we build a global distribution $f\in\cD'$ out of the $f^K$'s, by a localisation 
argument based on a partition of unity,
and we show that $f$ satisfies \eqref{eq:reco+1}.
\end{itemize}
It only remains to prove Steps~I and~II.

\subsubsection*{Step I. Proof of \eqref{eq:reco+1proof2<}}
Let us outline the strategy we are going to follow.

We have fixed a compact set $K \subseteq \R^d$. We now fix a point $x \in K$.
By \Cref{lem:sequential} and \Cref{th:molli}
we have $F_x(\psi) = \lim_{n\to\infty} F_x(\rho^{\epsilon_n} * \psi)$.
In view of \eqref{eq:fle0}, we define
\begin{equation}\label{eq:tildefn}
\begin{split}
	\bar f_{n}(\psi) 
	& := \Bigg( f_1(\psi) + \sum_{k=1}^{n-1} g_k'(\psi) \Bigg)  \,-\, F_x(\rho^{\epsilon_n} * \psi) 
\end{split}
\end{equation}
so that we can write
\[
	\bar f(\psi) := (f^K-F_x)(\psi) = \lim_{n\to\infty} \bar f_{n}(\psi)\, .
\]
We now fix $\lambda \in (0,1]$ and replace $\psi$ by $\psi_x^\lambda$.
By the triangle inequality, we get
\begin{equation}\label{neq:ffN2}
	|\bar f(\psi_x^\lambda)| \le 
	|(\bar f-\bar f_{N})(\psi_x^\lambda)| + |\bar f_{N}(\psi_x^\lambda) | \,,
\end{equation}
where $N\geq 1$, defined in \eqref{eq:Nlambda}, is such that
(we recall that $\epsilon_k=2^{-k}$)
\begin{equation*}
	\epsilon_{N} \leq \lambda < \epsilon_{{N}-1} = 2\,\epsilon_{N} \,.
\end{equation*}
We estimate the two terms in the right hand side of \eqref{neq:ffN2} separately.

\medskip
\noindent
\emph{First term in \eqref{neq:ffN2}.}
We bound
\begin{equation*}
	|(\bar f-\bar f_{N})(\psi_x^\lambda)|
	\le \sum_{k\ge N} |(\bar f_{k+1} - \bar f_k)(\psi_x^\lambda)| \,.
\end{equation*}
By \eqref{eq:tildefn} we can write
\begin{equation*}
	(\bar f_{k+1} - \bar f_k)(\psi_x^\lambda) \,=\, 
	g'_k(\psi_x^\lambda) - 
	F_x\big( (\rho^{\epsilon_{k+1}} - \rho^{\epsilon_k})*\psi_x^\lambda \big)
	\,=\, A_k^\lambda \,,
\end{equation*}
where the term $A_k^\lambda$ was defined in \eqref{eq:nede}.
We can then apply \Cref{th:Ak}, which holds also for $\gamma \le 0$: in particular,
by relation \eqref{eq:tobeusedlater} we obtain
\begin{equation}\label{eq:st2epp}
	|(\bar f-\bar f_{N})(\psi_x^\lambda)|
	\le \frac{4^{d+\gamma-\alpha}}{1-2^{-\alpha-r}} \, \hat C_1 \,
	\|\check{\varphi}\|_{L^1} \, \lambda^{\gamma}\,.
\end{equation}

\medskip
\noindent
\emph{Second term in \eqref{neq:ffN2}.} Since $N \ge 1$, we can bound
\begin{equation} \label{eq:babo}
	|\bar f_{N}(\psi_x^\lambda) | \le
|\bar f_{1}(\psi_x^\lambda) | + \sum_{k=1}^{N-1}
	|(\bar f_{k+1} - \bar f_k)(\psi_x^\lambda) | \,.
\end{equation}
For $k\le N-1$ we have $\epsilon_k \ge \epsilon_{N-1}
\ge \lambda$,  therefore
by the second line in \eqref{eq:Alambdak}
\begin{equation} \label{eq:usto}
	|(\bar f_{k+1} - \bar f_k)(\psi_x^\lambda) |  \le
	4^{d+\gamma-\alpha} \, \hat C_1 \,
	\|\check{\varphi}\|_{L^1} \, \epsilon_k^\gamma \, .
\end{equation}

Next we estimate $|\bar f_{1}(\psi_x^\lambda) | $.
By \eqref{eq:Tonconv} we have $F_x( \rho^{\epsilon_{1}} * \psi )
= \int_{\R^d} F_x( \rho^{\epsilon_{1}}_z ) \, \psi(z) \d z$.
Recalling \eqref{eq:tildefn} and the definitions \eqref{eq:fn}, \eqref{eq:rhoepsilon} of $f_1$
and $\rho$, we obtain
\begin{equation*}
\begin{split}
	\bar f_1(\psi_x^\lambda) 
	& = f_1(\psi)  \,-\, F_x(\rho^{\epsilon_n} * \psi)
	= \int_{\R^d} (F_y-F_x)(\rho^{\epsilon_1}_y) 
	\,\psi_x^\lambda(y) \d y \\
	& = \int_{\R^d} \int_{\R^d} (F_z - F_x)(\hat\varphi_y^{\epsilon_1}) \, 
	\hat{\varphi}^{2 \epsilon_1}(y-z) \,
	\psi_x^\lambda(z) \d y \d z \,.
\end{split}
\end{equation*}
Since $\epsilon_k=2^{-k}$ and $\hat\varphi$ has support in $B(0,\frac{1}{2})$, 
then $\hat\varphi^{2 \epsilon_1}$
has support in $B(0,\epsilon_1) = B(0,\frac{1}{2})$ and we may assume that 
$|y-z| \le \frac{1}{2}$. 
Since $\psi_x^\lambda$ is supported in $B(x,\lambda)\subset\bar{K}_1$, we can assume that
$|z-x|\leq\lambda$ and $z\in\bar K_1$, $y\in\bar K_{3/2}$. Then
\[
	|\bar f_1(\psi_x^\lambda) | 
	\leq \|\hat{\varphi}^{2 \epsilon_1}\|_{L^1}\, \|\psi_x^\lambda\|_{L^1}\, 
	\sup_{z\in B(x,\lambda), \, |y-z| \le \frac{1}{2}} |(F_z - F_x)(\hat\varphi_y)|.
\]
Moreover \eqref{eq:hyp1} for $\epsilon=1$ gives
\begin{gather*}
	\sup_{z\in B(x,\lambda), \, |y-z| \le \frac{1}{2}}	|(F_z-F_y)(\hat\varphi_y)|
	\le \hat C_1 \, (|y-z|+1)^{\gamma-\alpha}
	\le \hat C_1 \, 2^{\gamma-\alpha} \, ,\\
	\sup_{z\in B(x,\lambda), \, |y-z| \le \frac{1}{2}}	|(F_y-F_x)(\hat\varphi_y)|
	\le \hat C_1 \, (|y-x|+1)^{\gamma-\alpha}
	\le \hat C_1 \, 3^{\gamma-\alpha} \,,
\end{gather*}
therefore by the triangular inequality
\begin{equation} \label{eq:Clambdak1}
	|\bar f_1(\psi_x^\lambda) | 
	\le  2 \, \hat C_1\, 3^{\gamma-\alpha} \, \|\psi_x^\lambda\|_{L^1} \, 
	\|\hat{\varphi}^{2 \epsilon_1}\|_{L^1}
	\le \hat C_1\, 3^{\gamma-\alpha}
	\, 2^{d+1}\, \|\hat{\varphi}\|_{L^1} \, ,
\end{equation}
since $\|\psi_x^\lambda\|_{L^1} \le2^d$ for $\psi \in \cB_r$, by \eqref{eq:psiBr},
and $\|\hat{\varphi}^{2 \epsilon_1}\|_{L^1} = \|\hat{\varphi}\|_{L^1}$.
We can finally estimate $|\bar f_{N}(\psi_x^\lambda) |$ by \eqref{eq:babo}.
We get by \eqref{eq:usto} and \eqref{eq:Clambdak1}
\begin{equation} \label{eq:quaco1}
	|\bar f_{N}(\psi_x^\lambda) | \le 
	4^{d+\gamma-\alpha} \, \hat C_1 \,
	\|\check{\varphi}\|_{L^1} \, 
	\sum_{k=0}^{N-1}  \epsilon_k^\gamma \,.
\end{equation}
Recalling that $\epsilon_k = 2^{-k}$ 
and $\epsilon_N=2^{-N} \ge \lambda/2$, we obtain for $\gamma\leq 0$
\begin{equation}\label{eq:quaco2}
	\sum_{k=0}^{N-1}  \epsilon_k^\gamma
	= \sum_{k=0}^{N-1} 2^{-\gamma k} \le 
	\begin{cases}
	\displaystyle \frac{(\lambda/2)^\gamma -1}{2^{-\gamma}-1}
	\le \frac{\lambda^\gamma}{1-2^{\gamma}}
	& \text{if } \gamma < 0 \\
	\rule{0pt}{2.2em}
	\displaystyle 	\frac{\log \frac{2}{\lambda}}{\log 2}
	& \text{if } \gamma = 0 \,.
	\end{cases}
\end{equation}

\medskip
\noindent
\emph{Conclusion}.
At last, we can gather \eqref{neq:ffN2}, \eqref{eq:st2epp} 
and \eqref{eq:quaco1}-\eqref{eq:quaco2}.
For $\gamma<0$, since $\|\check{\varphi}\|_{L^1} \le 2 \, \|\hat{\varphi}\|_{L^1}$ 
by \eqref{eq:diffconv}, we obtain
\begin{equation*}
	|(f^K-F_x)(\psi_x^\lambda)| \le 
	\frac{4^{d+\gamma-\alpha +1}}{1-2^{-(\alpha+r)\wedge(-\gamma)}} \,
	\|\hat{\varphi}\|_{L^1} \, \hat C_1 \,\lambda^{\gamma} \,.
\end{equation*}
By \eqref{eq:phiC}, if we bound $e \le 4$ for simplicity, we obtain for all $\lambda \in (0,1]$
\begin{equation} \label{eq:unfr}
	|(f^K-F_x)(\psi_x^\lambda)| \le 
	\underbrace{\frac{r^2 \, 2^{-(r+1)\alpha }
	\, 4^{d+\gamma-\alpha+6}}
	{1-2^{-(\alpha+r)\wedge(-\gamma)}} \,
	\frac{\|\varphi\|_{L^1} \, (1+R_\varphi)^{-\alpha}}{|\int \varphi|^2}}_{\mathfrak C} \; 
	\vertiii{F}^\co_{\bar K_{3/2}, \varphi, \alpha, \gamma} \; \lambda^{\gamma} \,.
\end{equation}
For $\gamma=0$, since $\log(2/\lambda) / \log 2 \le 2(1+|\log \lambda|)$, we obtain
by \eqref{eq:quaco1}
\begin{equation} \label{eq:unfr2}
	|(f^K-F_x)(\psi_x^\lambda)| \le 
	\underbrace{\frac{r^2 \, 2^{-(r+1)\alpha}
	\, 4^{d-\alpha+6}}{1-2^{-\alpha-r}} \, 
	\frac{\|\varphi\|_{L^1} \, (1+R_\varphi)^{-\alpha}}{|\int \varphi|^2}}_{\mathfrak C} \; 
	\vertiii{F}^\co_{\bar K_{3/2}, \varphi, \alpha, \gamma}\,
	\left( 1+ |\log\lambda|\right) \, .
\end{equation}
This completes the proof of \eqref{eq:reco+1proof2<}.

\subsubsection*{Step II. Localization} 

In Step I we constructed for every
compact set $K\subset\R^d$ a distribution $f^K\in\cD'(\bar K_1)$ which satisfies 
\eqref{eq:reco+1proof2<}. 
We now exploit this construction only when $K$ is a ball. Indeed,
we use a partition of unity subordinated to a cover made by balls, to
construct a global distribution $f\in\cD'$ which satisfies \eqref{eq:reco+1}.

Fix $\eta\in\cD(B(0,\frac{1}{4}))$ such that $\eta\geq 0$ 
on $B(0,\frac{1}{4})$ and $\eta\geq 1$ on $B(0,\frac{1}{8})$ and set
\[
	E := \frac{1}{4\sqrt{d}} \, \Z^d\, , \qquad 
	\xi(x):=\frac{\eta(x)}{\sum_{z\in E} \eta(x-z)}, \qquad x\in\R^d.
\]
For every $y\in E$ we set  $\xi_y(x):=\xi(x-y)$, $x\in\R^d$. Then 
$\xi_y$ is supported in $B(y,\frac{1}{4})$
and note that $\sum_{y\in E} \xi_y(x) = 1$, for all $x\in\R^d$,
that is \emph{$(\xi_y)_{y\in E}$ is a
partition of unity subordinated to the cover $(B(y,\frac{1}{4}))_{y\in E}$}.
We finally define a distribution $f \in \cD'(\R^d)$ by
\begin{equation*}
	f \,:=\, \sum_{y\in E} f^{B_y} \cdot \xi_y \,, \qquad 
	\text{where} \quad B_y:=B(y,\tfrac{1}{4})\, ,
\end{equation*}
or more explicitly
\begin{equation*}
	f(\psi) = \sum_{y\in E} f^{B_y} (\xi_y \, \psi)\, , \qquad \forall \psi \in \cD(\R^d) \,.
\end{equation*}
We fix an arbitrary compact set $K\subset\R^d$ and
we redefine $\alpha,\beta$ and $r$ as follows:
\begin{equation}\label{eq:newr}
	\alpha := \alpha_{\bar K_2} \,, \qquad
	\beta := \beta_{\bar K_2} \,, \qquad
	r >\max\{-\alpha_{\bar K_2},-\beta_{\bar K_2}\} \,,
\end{equation}
i.e.\ we replace $\bar K_{3/2}$ by $\bar K_2$, as in the statement of \Cref{th:reco+}.
It remains to show that $f$ satisfies \eqref{eq:reco+1} on $K$ with 
these values of $r$ and~$\alpha$. 

We select the finite family of points $y_1,\ldots,y_n\in E$ for which
the balls $B_{y_i}$ have non-empty intersection with $K$. Since
each ball $B_{y_i}$ has diameter $\frac{1}{2}$, we have
\begin{equation*}
	K\subseteq\bigcup_{i=1,\ldots,n}B_{y_i} \subseteq \bar K_{1/2} \,.
\end{equation*}
Note that 
the $3/2$-enlargement
of each $B_{y_i}$ is contained in $\bar K_2$, the $2$-enlargement
of $K$. Then, by Step~I and
by the monotonicity properties \eqref{eq:monotone-alpha}-\eqref{eq:monotone-beta} 
of $K\mapsto\alpha_K$ and $K\mapsto\beta_K$, \emph{each distribution $f^{B_{y_i}}$
satisfies \eqref{eq:reco+1proof2<} for $K = B_{y_i}$
and for $r$ and $\alpha$ chosen as in \eqref{eq:newr}}.
For any test function
$\psi$ supported in $B(0,1)$ we can write
\begin{equation*}
	\xi_y(z) \, \psi^\lambda_x(z) = \zeta_x^\lambda(z) \qquad \text{where} \qquad
	\zeta(z)
	= \zeta^{[x,y,\lambda]}(z) := \xi_y(x+\lambda z) \, \psi(z) \,.
\end{equation*}
If we apply \eqref{eq:reco+1proof2<} for $K=B_y$ to
$\zeta / \|\zeta\|_{C^r} \in \cB_r$, we obtain for $\gamma < 0$
\begin{equation*}
	| ( f^{B_y} - F_x)(\xi_y \, \psi^\lambda_x) |
	= | ( f^{B_y} - F_x)(\zeta^\lambda_x) |
	\le \mathfrak{C} \, \|\zeta\|_{C^r} \, 
	\vertiii{F}^\co_{\bar K_2, \varphi, \alpha, \gamma} \, \lambda^\gamma \,,
\end{equation*}
where ${\mathfrak C}$ is as in \eqref{eq:unfr}. Note that, by Leibniz's rule,
\begin{equation*}
	\|\zeta\|_{C^r} \le 2^r \, \|\xi\|_{C^r} \, \|\psi\|_{C^r} \,.
\end{equation*}
Then, by definition of $f$ and by $\sum_{y\in E} \xi_y \equiv 1$,
\begin{equation*}
\begin{split}
	|(f-F_x)(\psi^\lambda_x)| &= \bigg| \sum_{y\in E} (f^{B_y} - F_x)
	(\xi_y \, \psi^\lambda_x) \bigg| \le 
	\sum_{y\in E} |(f^{B_y} - F_x)
	(\xi_y \, \psi^\lambda_x)| \\
	& \le (11\sqrt{d})^d \, \mathfrak{C} \, 
	2^r \, \|\xi\|_{C^r} \, \|\psi\|_{C^r} \,
	\vertiii{F}^\co_{\bar K_2, \varphi, \alpha, \gamma} \, \lambda^\gamma \,.
\end{split}
\end{equation*}
The last inequality holds because $\xi_y \, \psi^\lambda_x \equiv 0$
unless $|x-y| \le \frac{1}{4} + \lambda \le \frac{5}{4}$ 
and this can be satisfied by at most 
$(2 \cdot \frac{5}{4} \cdot 4\sqrt{d} + 1)^d \le (11\sqrt{d})^d$ many $y \in E$.
Therefore $f\in\cD'$  satisfies \eqref{eq:reco+1} for $\gamma < 0$, with
$\mathfrak{c} = \mathfrak{c}_{\alpha, \gamma, r, d, \varphi}$ given as follows
(recall \eqref{eq:unfr}):
\begin{gather}\label{eq:frakc2}
	\mathfrak{c}_{\alpha, \gamma, r, d, \varphi} = 2^r\,\|\xi\|_{C^r} \, (11\sqrt{d})^d \,
	\frac{r^2 \, 2^{-(r+1)\alpha }
	\, 4^{d+\gamma-\alpha+6}}
	{1-2^{-(\alpha+r)\wedge(-\gamma)}} \,
	\frac{\|\varphi\|_{L^1} \, (1+R_\varphi)^{-\alpha}}{|\int \varphi|^2} 
	\qquad \text{(for $\gamma < 0$)}
	\end{gather}
	where $R_\varphi$ is the radius of a ball $B(0,R_\varphi)$ which contains the support of $\varphi$.
With similar arguments, using \eqref{eq:unfr2},
for $\gamma = 0$ we obtain that $f\in\cD'$ satisfies \eqref{eq:reco+1}, with
\begin{gather}\label{eq:frakc3}
	\mathfrak{c}_{\alpha, \gamma, r, d, \varphi} = 2^r\,\|\xi\|_{C^r} \, (11\sqrt{d})^d \,
	\frac{r^2 \, 2^{-(r+1)\alpha}
	\, 4^{d-\alpha+6}}{1-2^{-\alpha-r}} \, 
	\frac{\|\varphi\|_{L^1} \, (1+R_\varphi)^{-\alpha}}{|\int \varphi|^2} 
	\qquad \text{(for $\gamma = 0$)}
\end{gather}
The proof is complete.
\qed

\bigskip

In the next sections we 
introduce the spaces of distributions with negative H\"older regularity
and we discuss some consequences of the Reconstruction Theorem.

\section{Negative H\"older spaces}
\label{sec:negHolder}

We generalize the classical H\"older spaces $\cC^\alpha$,
by allowing the index $\alpha$ to be negative. 
We recall that the family $\cB_r$ of test functions
was defined in \eqref{eq:cBr}.

\begin{definition}[Negative H\"older spaces]\label{def:Holderneg}
Given $\alpha \in (-\infty,0]$, we define $\cC^\alpha = \cC^\alpha(\R^d)$
as the space of distributions $T \in \cD'$ such that
\begin{equation} \label{eq:Holderneg}
\begin{gathered}
	|T(\psi_x^\epsilon)| \lesssim \epsilon^\alpha \\
	\text{uniformly for $x$ in compact sets, for $\epsilon \in (0,1]$
	and for $\psi \in \cB_{r_\alpha}$} \,,
\end{gathered}
\end{equation}
where we define $r_\alpha$
as the smallest integer $r\in\N$ such that $r > -\alpha$.
\end{definition}

\begin{remark}
Other definitions of the space $\cC^0$ are possible, see e.g.\ \cite{HL17}.
The one that we give here is convenient for our goals.
\end{remark}

For any distribution $T \in \cD'$ and $\alpha \le 0$, we define $\|T\|_{\cC^\alpha(K)}$ 
as the best constant in \eqref{eq:Holderneg}:
\begin{equation} \label{eq:normCalphaneg}
	\|T\|_{\cC^\alpha(K)} := \sup_{x\in K, \ \lambda \in (0,1], \ \psi \in \cB_{r_\alpha}}
	\frac{|T(\psi_x^\lambda)|}{\lambda^\alpha} \,.
\end{equation}
Then $T \in \cC^\alpha$ if and only if
$\|T\|_{\cC^\alpha(K)} < \infty$, for all compact sets
$K \subseteq \R^d$.

\begin{remark}
The quantity $\|\cdot\|_{\cC^\alpha(K)}$ is a semi-norm on $\cC^\alpha$.
It is actually a true norm for distributions $T$ which are \emph{supported in $K$},
i.e.\ such that $T(\xi) = 0$ for all test functions $\xi\in\cD$ which are supported in $K^c$.
\end{remark}

Remarkably, in order for a distribution $T \in \cD'$ to belong to $\cC^\alpha$,
it is enough that \eqref{eq:Holderneg} holds \emph{for a single, arbitrary 
test function $\psi = \varphi$ with $\int \varphi \ne 0$},
rather than uniformly for $\psi \in \cB_{r_\alpha}$.
This is ensured by our next results, that we prove below
using the same ideas as in the proof of the Reconstruction Theorem.

\begin{theorem}[Characterization of negative H\"older spaces]\label{th:charHolder}
Given a distribution $T \in \cD'$
and   $\alpha \in (-\infty, 0]$, the following conditions are equivalent.
\begin{enumerate}
\item\label{it:H1} $T$ is in $\cC^\alpha$

\item\label{it:H2} There is an integer $r > -\alpha$ such that
\eqref{eq:Holderneg} holds with $\cB_{r_\alpha}$ replaced by $\cB_r$.

\item\label{it:H3} There is a test function $\varphi \in \cD$ with $\int \varphi \ne 0$ such that
\begin{equation*}
\begin{gathered}
	|T(\varphi_x^\epsilon)| \lesssim \epsilon^\alpha \\
	\text{uniformly for $x$ in compact sets and for $\epsilon \in
	\{2^{-k}\}_{k\in\N} \subseteq (0,1]$} \,.
\end{gathered}
\end{equation*}
\end{enumerate}

Moreover, the semi-norm $\|T\|_{\cC^\alpha(K)}$ defined in \eqref{eq:normCalphaneg}
can be estimated explicitly
using an arbitrary test function $\varphi \in \cD$ with $\int \varphi \ne 0$:
\begin{equation} \label{eq:boundnormneg}
	\|T\|_{\cC^\alpha(K)} \le 
	\mathfrak{b}_{\varphi,\alpha,r_\alpha,d} \, \sup_{x\in \bar{K}_2, \ \epsilon \in (0,1]}
	\frac{|T(\varphi_x^\epsilon)|}{\epsilon^\alpha}
\end{equation}
where $\mathfrak{b}_{\varphi,\alpha,r,d}$ is an explicit constant, defined in \eqref{eq:constb} below.
\end{theorem}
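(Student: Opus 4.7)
My plan is to prove the equivalence via the chain $(1)\Rightarrow(2)\Rightarrow(3)\Rightarrow(1)$, extracting the quantitative bound \eqref{eq:boundnormneg} from the last implication. The first two steps are routine: for $(1)\Rightarrow(2)$ one just takes $r:=r_\alpha$, which is an integer strictly larger than $-\alpha$ by definition; for $(2)\Rightarrow(3)$, I fix an arbitrary $\varphi\in\cD$ with $\int\varphi\neq 0$, pick $\eta\in(0,1]$ small enough that $\varphi^\eta$ is supported in $B(0,1)$, and set $\psi:=\varphi^\eta/\|\varphi^\eta\|_{C^r}\in\cB_r$; the identity $\varphi_x^\epsilon=\|\varphi^\eta\|_{C^r}\,\psi_x^{\epsilon/\eta}$ transports the bound from $\psi$ to $\varphi$ at all sufficiently small (in particular, dyadic) scales.

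The crux is $(3)\Rightarrow(1)$ together with \eqref{eq:boundnormneg}, and my strategy is to reuse the multi-scale machinery of \Cref{sec:tweaking}--\Cref{sec:basic-estimates-convolutions} applied to the \emph{constant} germ $F_x:=T$. Since $F_z-F_y\equiv 0$, the coherence contribution is absent and only the homogeneity analysis survives. The key technical move will be to build the tweaked test function $\hat\varphi$ via \Cref{th:moments} with \emph{dyadic} scaling parameters $\lambda_i:=2^{-(i+M)}$ for $i=0,\ldots,r-1$, where $r:=r_\alpha$ and $M\in\N$ is chosen large enough that the resulting $\hat\varphi$ is supported in $B(0,\tfrac12)$. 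This ensures that every scale $\lambda_i\cdot 2^{-k}$ appearing inside $\hat\varphi^{2^{-k}}$ as a linear combination of rescaled $\varphi$'s is again dyadic, so hypothesis (3) yields $|T(\hat\varphi_y^{2^{-k}})|\lesssim 2^{-k\alpha}$ directly, uniformly for $y\in\bar K_2$, with the implicit constant controlled by $\sup_{y\in\bar K_2,\,\epsilon\in(0,1]}|T(\varphi_y^\epsilon)|/\epsilon^\alpha$.

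With this in place I fix $\psi\in\cB_r$, $x\in K$, $\lambda\in(0,1]$, set $\epsilon_k:=2^{-k}$ and $N:=\min\{k\in\N:\epsilon_k\le\lambda\}$, and use the mollifier $\rho:=\hat\varphi^2*\hat\varphi$ together with \Cref{th:molli} and \Cref{lem:sequential} to decompose
\[
  T(\psi_x^\lambda)\;=\;T(\rho^{\epsilon_N}*\psi_x^\lambda)\;+\;\sum_{k\ge N} T\bigl((\rho^{\epsilon_{k+1}}-\rho^{\epsilon_k})*\psi_x^\lambda\bigr).
\]
By identity \eqref{eq:checkvarphi2} and formula \eqref{eq:Tonconv}, each increment rewrites as $\int T(\hat\varphi_y^{\epsilon_k})\,(\check\varphi^{\epsilon_k}*\psi_x^\lambda)(y)\,\d y$, which \eqref{eq:Ge2} bounds by $C\,(\epsilon_k/\lambda)^r\,\epsilon_k^\alpha$ (using $\epsilon_k\le\lambda$ for $k\ge N$); summing over $k\ge N$ gives $C\,\lambda^{-r}\sum_{k\ge N}\epsilon_k^{r+\alpha}\lesssim\lambda^\alpha$ because $r+\alpha>0$. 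The base term is controlled analogously through \eqref{eq:Ge1} by $\lesssim\epsilon_N^\alpha\lesssim\lambda^\alpha$. All evaluation points $y$ lie in $\bar K_{3/2}\subseteq\bar K_2$, so tracking the constants throughout yields \eqref{eq:boundnormneg}.

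I expect the main obstacle to be bookkeeping rather than conceptual: coordinating the dyadic tweaking parameters $\lambda_i$ with the dyadic scales $\epsilon_k$ so that every evaluation of $T$ lands on an admissible dyadic scale of $\varphi$, and carefully handling the edge case where $\lambda$ is close to $1$ (so that $2\epsilon_N$ stays in the range covered by hypothesis (3)). A conceptually clean bonus is that, because the germ is constant, \emph{no logarithmic factor appears for $\alpha=0$}: starting the telescoping at the adaptive scale $\epsilon_N\approx\lambda$ rather than at a fixed scale avoids the harmonic-sum divergence that produced the $1+|\log\lambda|$ term in the proof of \Cref{th:reco+}.
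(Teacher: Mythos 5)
Your proposal is correct and takes essentially the same route as the paper: the implication (3)$\Rightarrow$(1) together with \eqref{eq:boundnormneg} is exactly the content of \Cref{0th:appetizer} applied with a constant function $f\equiv C$, and the paper proves that proposition by precisely your argument --- tweak $\varphi$ into $\hat\varphi$ via \Cref{th:moments}, set $\rho=\hat\varphi^{2}*\hat\varphi$, split $T(\psi_x^\lambda)$ into the term at the adaptive scale $\epsilon_N\le\lambda<2\epsilon_N$ plus a telescoping sum controlled by \eqref{eq:Ge1}--\eqref{eq:Ge2}, and sum the geometric series using $r+\alpha>0$ (which is also why no logarithm appears). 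Your only deviation, taking dyadic tweaking scales $\lambda_i=2^{-(i+M)}$ so that the dyadic-only hypothesis (3) applies verbatim, is a sensible refinement of the same method (the paper uses $\lambda_i=2^{-i-1}/(1+R_\varphi)$ and applies the dyadic hypothesis without comment) and only alters the explicit constant $\mathfrak{b}_{\varphi,\alpha,r_\alpha,d}$, not the structure of the proof.
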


We deduce a simple \emph{countable} criterion for a distribution $T\in\cD'$ to belong to $\cC^\alpha$.

\begin{theorem}[Countable criterion for negative H\"older spaces]
Let $\alpha \le 0$ and $T\in\cD'$. Then $T\in\cC^\alpha$ if (and only if) there 
is a test function $\varphi \in \cD$ with $\int \varphi \ne 0$ such that,
for every fixed $n\in\N$, we have
\begin{equation} \label{eq:Holdernegpsi3}
\begin{gathered}
	|T(\varphi_x^{\epsilon})| \lesssim \epsilon^{\alpha} \\
	\text{uniformly for $x\in{\mathbb Q}^d\cap B(0,n)$ and $\epsilon
	\in \{2^{-k}\}_{k\in\N}$} \,.
\end{gathered}
\end{equation}
\end{theorem}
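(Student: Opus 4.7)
The plan is to reduce this criterion to \Cref{th:charHolder}\eqref{it:H3}, whose hypothesis requires the bound $|T(\varphi_x^\epsilon)| \lesssim \epsilon^\alpha$ uniformly for \emph{all} $x$ in compact sets and $\epsilon\in\{2^{-k}\}_{k\in\N}$. The only gap between the assumption \eqref{eq:Holdernegpsi3} and this hypothesis is that \eqref{eq:Holdernegpsi3} restricts $x$ to rationals in $B(0,n)$. So the whole game is to upgrade ``rational $x$'' to ``arbitrary $x$'' in a compact set, by density plus continuity.

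First, I would fix an integer $n\in\N$ and a constant $C_n<\infty$ such that
\[
|T(\varphi_x^{2^{-k}})| \le C_n \, 2^{-k\alpha} \qquad \forall\, x\in\Q^d\cap B(0,n),\ k\in\N.
\]
Next, for each fixed $k\in\N$, I would observe that the map $x\mapsto \varphi_x^{2^{-k}}$ is continuous as a map $\R^d\to\cD$. Indeed, if $x_j\to x_0$ in $\R^d$, then for $j$ large the functions $\varphi_{x_j}^{2^{-k}}$ are all supported in a common compact set, and all their partial derivatives converge uniformly to those of $\varphi_{x_0}^{2^{-k}}$ (since $\varphi$ itself is smooth and compactly supported). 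Hence $\varphi_{x_j}^{2^{-k}}\to \varphi_{x_0}^{2^{-k}}$ in $\cD$, and by \Cref{lem:sequential} we get $T(\varphi_{x_j}^{2^{-k}})\to T(\varphi_{x_0}^{2^{-k}})$.

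Given this continuity, the uniform bound $|T(\varphi_x^{2^{-k}})|\le C_n\,2^{-k\alpha}$ extends by density from $\Q^d\cap B(0,n)$ to the whole closed ball $\overline{B(0,n)}$ (the constant $C_n$ does not change, since we are passing to a pointwise limit of a bounded quantity for each fixed $k$). In particular, given any compact set $K\subseteq\R^d$, choosing $n\in\N$ with $K\subseteq\overline{B(0,n)}$ we obtain
\[
|T(\varphi_x^{2^{-k}})| \le C_n \, 2^{-k\alpha} \qquad \forall\, x\in K,\ k\in\N,
\]
which is precisely condition \eqref{it:H3} of \Cref{th:charHolder}. That theorem then immediately yields $T\in\cC^\alpha$, finishing the proof.

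The only subtle point is the continuity argument for $x\mapsto T(\varphi_x^{2^{-k}})$, and it is not really an obstacle: it is a direct consequence of the sequential continuity of distributions (\Cref{lem:sequential}) combined with the fact that translations act continuously on $\cD$. Note that no extension to non-dyadic $\epsilon$ is needed, since \Cref{th:charHolder}\eqref{it:H3} already allows $\epsilon$ to range only over $\{2^{-k}\}_{k\in\N}$.
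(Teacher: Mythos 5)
Your proof is correct and follows essentially the same route as the paper: extend the bound from $\Q^d\cap B(0,n)$ to all of $B(0,n)$ using the continuity of $x\mapsto T(\varphi_x^\epsilon)$ (which, as you note, comes from continuity of $x\mapsto\varphi_x^\epsilon$ in $\cD$ together with \Cref{lem:sequential}), and then invoke \Cref{th:charHolder}\eqref{it:H3}. Your write-up simply makes explicit the density and sequential-continuity details that the paper leaves implicit.
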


\begin{proof}
The map $x\mapsto \varphi_x^\epsilon\in\cD$ is continuous,
hence $x \mapsto T(\varphi_x^\epsilon)$ is a continuous function. 
It follows that \eqref{eq:Holdernegpsi3} holds for all $x \in B(0,n)$,
so \Cref{th:charHolder} applies.
\end{proof}

We finally turn to the proof of \Cref{th:charHolder},
that we obtain as a corollary of the following more general result,
{proved at the end of this section.}

\begin{proposition}\label{0th:appetizer}
Let $T \in \cD'(\R^d)$ be a distribution with the following property:
there are a subset $K \subseteq \R^d$
and a test function $\varphi \in \cD$ with $\int \varphi \ne 0$ such that
\begin{equation}\label{eq:hypT}
	\forall x\in \bar{K}_2, \ \ \forall \, \epsilon \in \{2^{-k}\}_{k\in\N}: \qquad
	|T(\varphi_x^{\epsilon})| \le \epsilon^{\alpha} \, f(\epsilon, x) \,,
\end{equation}
for some exponent $\alpha \leq 0$ and some
arbitrary function $f: (0,1] \times \bar{K}_2 \to [0,\infty)$.

Then we can upgrade relation \eqref{eq:hypT} as follows:
for any integer $r > -\alpha$,
\begin{equation}\label{0eq:conclT}
	\forall x\in K, \ \ \forall \lambda \in (0,1], \ \ \forall \psi \in \cB_r: \qquad
	|T(\psi_x^\lambda)| \le \mathfrak{b}_{\varphi,\alpha,r,d} \, \lambda^\alpha \, 
	\bar{f}(\lambda,x) \,, 
\end{equation}
where $\mathfrak{b}_{\varphi,\alpha,r,d}$ is the constant in \eqref{eq:constb} below,
and $\bar f: (0,1] \times K \to [0,\infty)$ equals
\begin{equation}\label{eq:fbar}
	\bar{f}(\lambda, x) :=
	\sup_{\lambda' \in (0, \lambda], \ 
	x' \in B(x,2\lambda)} f(\lambda', x') \,.
\end{equation}
\end{proposition}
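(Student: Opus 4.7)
The plan is to adapt the telescoping-convolution argument from the proof of the Reconstruction Theorem to this single-distribution setting. First I would apply the Tweaking Lemma~\ref{th:moments} to $\varphi$ with \emph{dyadic} scales $\lambda_i := 2^{-i-n_0}$, choosing $n_0\in\N$ large enough that the resulting test function $\hat\varphi = \tfrac{1}{\int\varphi}\sum_{i=0}^{r-1} c_i\,\varphi^{\lambda_i}$ is supported in $B(0,\tfrac12)$. Then $\hat\varphi$ satisfies \eqref{eq:supphatphi0}--\eqref{eq:monomials0} and the associated $\check\varphi := \hat\varphi^{1/2}-\hat\varphi^2$ satisfies \eqref{eq:suppcheckphi}--\eqref{eq:checkmonomials}. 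The key advantage of choosing the $\lambda_i$'s dyadic and $\le\tfrac12$ is that for every $\epsilon\in\{2^{-k}\}_{k\in\N}$ each $\lambda_i\epsilon$ is again in $\{2^{-k}\}_{k\in\N}$ and $\lambda_i\epsilon\le\lambda$ whenever $\epsilon\le 2\lambda$. Plugging the expansion of $\hat\varphi$ into \eqref{eq:hypT} then yields the \emph{transfer bound}
\[
|T(\hat\varphi^\epsilon_y)| \,\le\, C_1\,\epsilon^\alpha\,\bar f(\lambda,x),
\qquad y\in B(x,2\lambda),\ \epsilon\in\{2^{-k}\}_{k\in\N},\ \epsilon\le 2\lambda,
\]
where $C_1=C_1(\varphi,\alpha,r,d)$ is explicit via \eqref{eq:boundonci}.

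Next, following Step~1 of Section~\ref{sec:reco+}, for fixed $x\in K$, $\lambda\in(0,1]$ and $\psi\in\cB_r$, I would set $\rho := \hat\varphi^2 * \hat\varphi$ (so $\int\rho=1$), let $N=N_\lambda$ be defined by \eqref{eq:Nlambda} so that $\epsilon_N\le\lambda<2\epsilon_N$, and use \Cref{th:molli} together with \Cref{lem:sequential} to telescope
\[
T(\psi^\lambda_x) \,=\, T(\rho^{\epsilon_N}*\psi^\lambda_x) \,+\, \sum_{k\ge N} T\bigl((\rho^{\epsilon_{k+1}}-\rho^{\epsilon_k})*\psi^\lambda_x\bigr).
\]
By \eqref{eq:checkvarphi2} and \eqref{eq:Tonconv}, each increment equals $\int T(\hat\varphi^{\epsilon_k}_y)\,(\check\varphi^{\epsilon_k}*\psi^\lambda_x)(y)\,\d y$, which \eqref{eq:Ge2} bounds by $\lesssim (\epsilon_k/\lambda)^r\,\sup_{y\in B(x,\lambda+\epsilon_k)}|T(\hat\varphi^{\epsilon_k}_y)|$ for $k\ge N$. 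Combining with the transfer bound and summing $\sum_{k\ge N}\epsilon_k^{r+\alpha}$, which converges because $r+\alpha>0$, yields a tail contribution of order $\lambda^\alpha\,\bar f(\lambda,x)$.

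For the principal term, the identity $(\hat\varphi^2*\hat\varphi)^\epsilon = \hat\varphi^{2\epsilon}*\hat\varphi^\epsilon$ and a change of variables rewrite $\rho^{\epsilon_N}_z = \int \hat\varphi^{2\epsilon_N}_v\,\hat\varphi^{\epsilon_N}_z(v)\,\d v$, hence by \eqref{eq:Tonconv}
\[
T(\rho^{\epsilon_N}*\psi^\lambda_x) \,=\, \int T(\hat\varphi^{2\epsilon_N}_v)\,(\hat\varphi^{\epsilon_N}*\psi^\lambda_x)(v)\,\d v.
\]
Since $2\epsilon_N = \epsilon_{N-1}$ is dyadic and $\le 2\lambda$, the transfer bound applies; together with \eqref{eq:Ge1} and the inequality $(2\epsilon_N)^\alpha\le\lambda^\alpha$ (using $\alpha\le 0$ and $\lambda<2\epsilon_N$), this contributes $\lesssim \lambda^\alpha\,\bar f(\lambda,x)$ as well.

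The main obstacle is the bookkeeping required to ensure that $\bar f(\lambda,x)$ genuinely absorbs every call to $f$: all spatial arguments must lie in $B(x,2\lambda)$ (guaranteed by the supports of the convolutions involved) and all scale arguments must be $\le\lambda$ (guaranteed by the dyadic choice of $\lambda_i\le\tfrac12$ combined with $\epsilon_N,2\epsilon_N\le 2\lambda$). Collecting the explicit constants coming from $C_1$, \eqref{eq:boundhatvarphi}, the geometric sum, and $\|\check\varphi\|_{L^1}\le 2\|\hat\varphi\|_{L^1}$ then produces \eqref{0eq:conclT} with an explicit $\mathfrak{b}_{\varphi,\alpha,r,d}$.
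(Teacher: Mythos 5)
Your proof is correct and is essentially the paper's own argument: tweak $\varphi$ into a $\hat\varphi$ annihilating monomials, set $\rho=\hat\varphi^2*\hat\varphi$, telescope $T(\psi^\lambda_x)$ over dyadic scales starting from $\epsilon_N\le\lambda<2\epsilon_N$, bound the principal term with \eqref{eq:Ge1} and the increments with \eqref{eq:Ge2}, and sum the geometric series using $r+\alpha>0$; your choice of dyadic tweaking scales $\lambda_i=2^{-i-n_0}$ is a nice touch that keeps every invocation of the hypothesis at scales in $\{2^{-k}\}_{k\in\N}$. The only detail to patch is the principal term when $N=1$ (i.e.\ $\lambda\ge\tfrac12$), where $2\epsilon_N=1\notin\{2^{-k}\}_{k\in\N}$: either observe that your transfer bound extends verbatim to $\epsilon=1$ because each $\lambda_i\le\tfrac12$ is itself dyadic, or, as the paper does, place the $\epsilon_N$-scaled copy of $\hat\varphi$ inside $T$ and convolve $\hat\varphi^{2\epsilon_N}$ with $\psi^\lambda_x$.
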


\begin{proof}[Proof of \Cref{th:charHolder}]
Clearly \ref{it:H1}. implies \ref{it:H2}., because $\cB_r \subseteq \cB_{r_\alpha}$
for $r \ge r_\alpha$, and \ref{it:H2}. implies \ref{it:H3}., because
we can choose any $\varphi = \psi \in \cB_r$ with $\int \psi \ne 0$.

To prove that \ref{it:H3}. implies \ref{it:H1}., it suffices to apply \Cref{0th:appetizer} 
on every compact set with a constant function $f(\lambda,x)\equiv C$.
Equation \eqref{eq:boundnormneg} then follows by \eqref{0eq:conclT}.
\end{proof}

We next show that the reconstruction $f = \cR F$ of a coherent germ $F$
provided by the Reconstruction Theorem belongs to a negative H\"older space
and it is a continuous function of the germ, in a suitable sense.

We recall that the \emph{coherence} of a germ is quantified by the semi-norm
$\vertiii{F}^\co_{\bar K_1, \varphi, \alpha, \gamma}$ defined in \eqref{eq:triple1}.
We introduce a second semi-norm which quantifies the \emph{homogeneity} of a
coherent germ:
for any compact set $K\subset\R^d$ we define, recalling \Cref{th:boundor},
\begin{equation}\label{eq:normFhom}
	\vertiii{F}^\ho_{K,\varphi, \beta}
	:= \sup_{x\in K,\ \epsilon\in(0,1]} \frac{|F_x(\varphi^\epsilon_x)|}{\epsilon^\beta} \,,
\end{equation}
where $\varphi$ is as in \Cref{def:coherent-germ}.
We can now state the following result.

\begin{theorem}[Reconstruction Theorem and H\"older spaces]\label{pr:image}
Let $(F_x)_{x\in\R^d}$ be a $(\alpha,\gamma)$-coherent germ with local homogeneity bound
$\beta<\gamma$. 
If $\beta > 0$, then $\cR F = 0$. 
If $\beta \le 0$,
then $\cR F$ belongs to $\cC^{\beta}$ and for every compact set $K\subseteq \R^d$
\begin{equation}\label{eq:cont2}
	\|\cR F\|_{\cC^\beta(K)}
	\le \mathfrak{C} \,
	\Big( \vertiii{F}^\co_{\bar K_4, \varphi, \alpha, \gamma}
	+ \vertiii{F}^\ho_{\bar K_2,\varphi, \beta}\Big) \,,
\end{equation}
where $\varphi$ be the test function in the coherence condition \eqref{eq:coherent}
and $\mathfrak{C} = \mathfrak{C}_{\alpha, \gamma, \beta, d, \varphi} < \infty$
is a constant
which depends neither on $F$ nor on $K$.
\end{theorem}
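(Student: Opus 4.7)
The plan is to dispose of the two cases separately. For $\beta>0$, the homogeneity bound gives $|F_x(\varphi_x^\epsilon)|\le \vertiii{F}^\ho_{\bar K_2,\varphi,\beta}\,\epsilon^\beta\to 0$ uniformly on compact sets as $\epsilon\downarrow 0$, so the zero distribution satisfies the strong approximation property \eqref{eq:pre-coherent}, and in particular the Reconstruction bound \eqref{eq:reco+1}. Hence $f\equiv 0$ is a legitimate (and, by \Cref{th:identi} when $\gamma>0$, forced) choice of reconstruction: we may take $\cR F = 0$, consistently with the linearity requirement.

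The substance lies in the case $\beta\le 0$. The natural device is \Cref{th:charHolder}: the explicit bound \eqref{eq:boundnormneg} applied with $\alpha=\beta$ reduces the problem to proving
\[
	|\cR F(\varphi_x^\epsilon)| \,\lesssim\, \epsilon^\beta \, \Big(\vertiii{F}^\co_{\bar K_4,\varphi,\alpha,\gamma} + \vertiii{F}^\ho_{\bar K_2,\varphi,\beta}\Big)
\]
uniformly for $x\in\bar K_2$ and $\epsilon\in(0,1]$, with implicit constant depending only on $\alpha,\beta,\gamma,d,\varphi$. The split is via the triangle inequality $|\cR F(\varphi_x^\epsilon)|\le|(\cR F-F_x)(\varphi_x^\epsilon)|+|F_x(\varphi_x^\epsilon)|$. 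The second summand is bounded directly by $\vertiii{F}^\ho_{\bar K_2,\varphi,\beta}\,\epsilon^\beta$ from the definition \eqref{eq:normFhom}, since $x\in\bar K_2$. For the first summand I apply the Reconstruction Theorem \Cref{th:reco+} to the compact set $L:=\bar K_2$: the associated enlargement $\bar L_2=\bar K_4$ is precisely what produces the $\bar K_4$ in the coherence semi-norm of the claim. Since \eqref{eq:reco+1} is formulated for $\psi\in\cB_r$ while $\varphi$ is an arbitrary test function, I use the rescaling recipe of \Cref{rem:RTsingle}: setting $\psi:=\varphi^{1/R_\varphi}/M \in\cB_r$ for a suitable $M$, one has $\varphi_x^\epsilon=M\,\psi_x^{\epsilon R_\varphi}$, so \eqref{eq:reco+1} yields $|(\cR F-F_x)(\varphi_x^\epsilon)|\lesssim\vertiii{F}^\co_{\bar K_4,\varphi,\alpha,\gamma}\cdot\phi(\epsilon)$ with $\phi(\epsilon)=\epsilon^\gamma$ for $\gamma\neq 0$ and $\phi(\epsilon)=1+|\log\epsilon|$ for $\gamma=0$. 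The hypothesis $\beta<\gamma$ together with $\beta\le 0$ forces $\phi(\epsilon)\lesssim\epsilon^\beta$ on $(0,1]$, and summing the two contributions produces the desired estimate.

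The one point requiring care is the scale range $\epsilon\in(R_\varphi^{-1},1]$, which is nonempty only when $R_\varphi>1$: there $\epsilon R_\varphi>1$ and \eqref{eq:reco+1} does not apply verbatim. This is a bounded range on which the target $\epsilon^\beta$ is itself bounded above, so it poses no analytical difficulty; the cleanest remedy is to assume without loss of generality that $\varphi$ is supported in $B(0,1)$ (absorbing the resulting multiplicative constants into $\mathfrak C$), or, alternatively, to estimate $|(\cR F-F_x)(\varphi_x^\epsilon)|$ on the missing range by iterating the coherence bound \eqref{eq:hyp1} over $\bar K_4$ against the mollifier telescoping \eqref{eq:splitfn}--\eqref{eq:fn'''} used in the proof of \Cref{th:reco+}. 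Aside from this minor bookkeeping point, the argument is a direct combination of \Cref{th:charHolder} and \Cref{th:reco+}.
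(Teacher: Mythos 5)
Your proposal is correct and follows essentially the same route as the paper: reduce via the characterization \eqref{eq:boundnormneg} of $\cC^\beta$ to bounding $|\cR F(\varphi_x^\epsilon)|$ on $\bar K_2$, split by the triangle inequality into the homogeneity semi-norm and the reconstruction error, estimate the latter by applying \Cref{th:reco+} on $\bar K_2$ (whose $2$-enlargement gives $\bar K_4$) after rescaling $\varphi$ into $\cB_r$ as in \Cref{rem:RTsingle}, and absorb $\lambda^\gamma$ or the logarithm into $\lambda^\beta$ using $\beta<\gamma$, $\beta\le 0$; the $\beta>0$ case is handled exactly as in \Cref{rem:poshom}. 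Your explicit treatment of the scale range where the rescaled parameter exceeds $1$ is a point the paper leaves implicit, but it changes nothing of substance.
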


\begin{remark}
The bound \eqref{eq:cont2} holds \emph{for any test function $\varphi \in \cD$
with $\int\varphi \ne 0$}, as for
the coherence condition \eqref{eq:coherent}. This will be shown in \Cref{th:coh}.
\end{remark}

\begin{proof}
When $\beta > 0$ we already observed in  \Cref{rem:poshom} that $\cR F = 0$.
Henceforth we fix $\beta \le 0$. 
Let $\varphi$ be the test function in the coherence condition \eqref{eq:coherent}.
Let $f=\cR F$ by a reconstruction of $F$. Fix a compact set $K$:
if we show that
\begin{equation} \label{eq:uiclaim}
	\sup_{x \in \bar{K}_2, \ \lambda \in (0,1]}\frac{|f(\varphi^\lambda_x)|}{\lambda^\beta}
	\le \mathfrak{C}' \, \Big( \vertiii{F}^\co_{\bar K_4, \varphi, \alpha, \gamma}
	+ \vertiii{F}^\ho_{\bar K_2,\varphi, \beta}\Big) 
\end{equation}
for some $\mathfrak{C}' = \mathfrak{C}'_{\alpha, \gamma, \beta, d, \varphi} < \infty$,
then we obtain \eqref{eq:cont2} by \eqref{eq:boundnormneg} with
$\mathfrak{C} = \mathfrak{b}_{\varphi,\beta,r_\beta,d} \, \mathfrak{C}'$.

It remains to prove \eqref{eq:uiclaim}.
Let us set $\bar r :=
\min\{r \in \N: \ r > \max \{-\alpha, -\beta\}\}$.
We observed in \Cref{rem:RTsingle} that
$\xi := c \, \varphi^\eta \in \cB_{\bar{r}}$ for suitable $c, \eta > 0$
(which depend on $\varphi$ and $\bar r$). Then by \eqref{eq:reco+1} for $r=\bar{r}$ we have,
uniformly for $x \in \bar{K}_2$ and $\lambda\in(0,1]$,
\begin{equation*}
	|(f-F_x)(\varphi^\lambda_x)|
	= c^{-1} \, |(f-F_x)(\psi^{\eta^{-1}\lambda}_x)|  \le \mathfrak{c}' \,
	\vertiii{F}^\co_{\bar K_4, \varphi, \alpha, \gamma} \cdot  \begin{cases}
	\lambda^{\gamma} & \text{if } \gamma \ne 0 \\
	(1 + |\log\lambda|) & \text{if } \gamma = 0 
	\end{cases} 
\end{equation*}
for a suitable $\mathfrak{c}' = \mathfrak{c}'_{\alpha,\gamma,\beta,d,\varphi}$.
Since $\beta < \gamma$, we bound
$\lambda^\gamma \le \lambda^\beta$ for $\gamma \ne 0$ 
and $1+|\log \lambda| \le c_\beta \, \lambda^\beta$ for $\gamma = 0$,
 for all $\lambda \in (0,1]$
(by direct computation $c_\beta = -\beta^{-1} \, e^{-1-\beta}$).
Recalling \eqref{eq:normFhom}, by the triangle inequality we obtain
\begin{equation*}
\begin{split}
	\sup_{x \in \bar{K}_2, \ \lambda \in (0,1]}\frac{|f(\varphi^\lambda_x)|}{\lambda^\beta}
	& \le \sup_{x \in \bar{K}_2, \ \lambda \in (0,1]}
	\frac{|(f-F_x)(\varphi^\lambda_x)| + |F_x(\varphi^\lambda_x)|}{\lambda^\beta} \\
	& \le (1+c_\beta) \, \mathfrak{c}' \, \vertiii{F}^\co_{\bar K_4, \varphi, \alpha, \gamma} 
	\, + \, \vertiii{F}^\ho_{\bar K_2, \varphi, \beta} \,,
\end{split}
\end{equation*}
which completes the proof of \eqref{eq:uiclaim}.
\end{proof}

\begin{remark}[Non uniqueness]\label{rem:well-def}
Let $(F_x)_{x\in\R^d}$ be a $(\alpha,\gamma)$-coherent germ with $\gamma < 0$ 
and let $f_1$ and $f_2$ be two distributions 
which both satisfy \eqref{eq:reco+1}. Then
\[
|(f_1-f_2)(\psi^\lambda_x)| \leq |(f_1-F_x)(\psi^\lambda_x)|+|(f_2-F_x)(\psi^\lambda_x)|
\lesssim \lambda^\gamma
\]
uniformly for $x$ in compact sets and $\lambda\in(0,1]$ and therefore $f_1-f_2\in\cC^{\gamma}$,
by \Cref{th:charHolder}. Viceversa, if 
$f\in\cD'$ satisfies \eqref{eq:reco+1} and $D\in\cC^{\gamma}$, 
then $f+D$ also satisfies \eqref{eq:reco+1}.
Therefore, \emph{the reconstruction $f = \cR F$ of a $(\alpha,\gamma)$-coherent germ $F$
with $\gamma < 0$ is not unique, but it
is well-defined up to an element of $\cC^{\gamma}$}.
\end{remark}

{We conclude this section with the proof of \Cref{0th:appetizer}.}

\begin{proof}[Proof of \Cref{0th:appetizer}]
Fix $\varphi \in \cD$ with $\int \varphi \ne 0$
which satisfies \eqref{eq:hypT} and
$r \in \N$ with $r > -\alpha$. We define the test function $\hat\varphi = \hat\varphi^{[r]}$ by
\eqref{eq:hatvarphi} and we claim that
\begin{equation}\label{0eq:hypT}
	\forall x\in \bar{K}_2, \ \ \forall \, \epsilon \in \{2^{-k}\}_{k\in\N}: \qquad
	|T(\hat\varphi_x^{\epsilon})| \le C \, \epsilon^{\alpha} \, \tilde f(\epsilon, x) \,,
\end{equation}
where
\begin{equation}\label{eq:Cproof}
	\tilde f(\epsilon, x) := \sup_{\epsilon' \in (0,\epsilon]} f(\epsilon', x) \,, \qquad
	C := \tfrac{e^2 \, r}{|\int \varphi|} \, \big(\tfrac{2^{-r-1}}{1+R_\varphi}\big)^\alpha \,.
\end{equation}
To prove this claim, it suffices to write
$T(\hat\varphi_x^\epsilon) = \frac{1}{\int \varphi} \, \sum_{i=0}^{r-1} c_i \, 
T(\varphi_x^{\epsilon \lambda_i})$
and to apply \eqref{eq:hypT} to $T(\varphi_x^{\epsilon \lambda_i})$,
noting that $\frac{2^{-r-1}}{1+R_\varphi} < \lambda_i \le 1$ by \eqref{eq:hatvarphi} 
and $|c_i| \le e^2$ by \eqref{eq:boundonci}.

We recall that $\hat\varphi$ satisfies \eqref{eq:supphatphi}-\eqref{eq:monomials}
as well as \eqref{eq:boundhatvarphi}.
Next we define
\begin{equation*}
	\rho := \hat\varphi^2 * \hat\varphi \,, \qquad
	\epsilon_k = 2^{-k}\, ,
\end{equation*}
as in \eqref{eq:rhoepsilon} above.
Then, see \eqref{eq:checkvarphi2},
\begin{equation} \label{0eq:diffrho}
	\rho^{\epsilon_{k+1}} - \rho^{\epsilon_k} =
	\hat\varphi^{\epsilon_k} * \check\varphi^{\epsilon_k} \qquad
	\text{where} \qquad
	\check\varphi := \hat\varphi^{\frac{1}{2}} - \hat\varphi^{2} \,.
\end{equation}
Note that $(\rho^{\epsilon_n})_{n\in\N}$ are mollifiers, because $\int \rho = \int \hat\varphi \cdot
\int \hat\varphi^2 = 1$ (recall that $\int \hat\varphi = 1$), therefore
for any test function $\psi$ we have
\begin{equation} \label{eq:recalli}
	T(\psi_x^\lambda) = \lim_{n\to\infty}
	T(\rho^{\epsilon_n} * \psi_x^\lambda)
\end{equation}
hence for every $N\in\N$ we can write
\begin{equation} \label{0eq:Tdeco}
	T(\psi_x^\lambda) 
	\ = \ \underbrace{T(\rho^{\epsilon_N} * \psi_x^\lambda)}_A
	+
	\underbrace{\big( T(\psi_x^\lambda) -T(\rho^{\epsilon_N} * \psi_x^\lambda)\big)}_B\,.
\end{equation}
Henceforth we fix $\psi \in \cB_r$ and we set
$N := \min\{k\in\N: \ \epsilon_k \le {\lambda}\}$ so that $N \ge 1$ and
\begin{equation} \label{0eq:epsilonNlambda}
	\tfrac{1}{2} \lambda < \epsilon_N \le  \lambda \,.
\end{equation}
We estimate separately the two terms $A$ and $B$ in \eqref{0eq:Tdeco}.

\bigskip
\noindent
\emph{Estimate of $A$.} We can write
\begin{equation*}
\begin{split}
	A &= T(\rho^{\epsilon_N} * \psi_x^\lambda) 
	= \int_{\R^d} T(\rho_z^{\epsilon_N}) \, \psi_x^\lambda(z) \d z\\
	&= \int_{\R^d} \int_{\R^d} T(\hat\varphi_y^{\epsilon_N})
	\, \hat\varphi^{2\epsilon_N}(y-z) \, \psi_x^\lambda(z) \d y \d z \\
	&= \int_{\R^d} T(\hat\varphi_y^{\epsilon_N})
	\, (\hat\varphi^{2\epsilon_N} * \psi_x^\lambda)(y) \d y  \,.
\end{split}
\end{equation*}
We now apply \Cref{th:keyintest2} with
$G(y):=T(\hat\varphi_y^{\epsilon})$: by \eqref{eq:Ge1} we obtain
\[
	|A| \le 2^{d} \, \|\hat\varphi \|_{L^1} \, \sup_{y \in B(x,\lambda+\epsilon_N)}
	|T(\hat\varphi_y^{\epsilon_N})|\, .
\]
By \eqref{0eq:epsilonNlambda} we have $\lambda + \epsilon_N \le 2\lambda$
and $\epsilon_N\geq\lambda/2$. Since $\alpha\leq 0$, we obtain by \eqref{0eq:hypT} 
\[
	\sup_{y \in B(x,\lambda+\epsilon_N)} |T(\hat\varphi_y^{\epsilon_N})|
	\leq C \, \epsilon_N^{\alpha} \, \sup_{y\in B(x,2\lambda)} \tilde f(\epsilon_N,y)
	\leq C \, (\lambda/2)^\alpha\sup_{\lambda'\in(0,\lambda], \ x'\in B(x,2\lambda)} f(\lambda',x')\, ,
\]
and finally, recalling \eqref{eq:fbar},
\begin{equation} \label{0eq:aest}
	|A| \le \big\{ 2^{d-\alpha}
	\, C \, \|\hat\varphi \|_{L^1} \big\} \, \lambda^\alpha \,  \bar f(\lambda,x)\, .
\end{equation}

\bigskip
\noindent
\emph{Estimate of $B$.} Let us fix $k \in \N$ with $k \ge N$. We can write, by \eqref{0eq:diffrho},
\begin{equation*}
\begin{split}
	b_k & :=  T(\rho^{\epsilon_{k+1}} * \psi_x^\lambda)
	- T(\rho^{\epsilon_k} * \psi_x^\lambda)
	= \int_{\R^d} T(\rho_z^{\epsilon_{k+1}} - \rho_z^{\epsilon_k}) \, \psi_x^\lambda(z) \d z\\
	&= \int_{\R^d} \int_{\R^d} T(\hat\varphi_y^{\epsilon_k})
	\, \check\varphi^{\epsilon_k}(y-z) \, \psi_x^\lambda(z) \d y \d z \\
	&= \int_{\R^d} T(\hat\varphi_y^{\epsilon_k})
	\, (\check\varphi^{\epsilon_k} * \psi_x^\lambda)(y) \d y  \,.
\end{split}
\end{equation*}
Note that $\check\varphi$ is supported in $B(0,1)$ 
(because $\hat\varphi$ is supported in $B(0,\frac{1}{2})$, recall \eqref{eq:supphatphi})
and
$\epsilon_k \le \epsilon_N \le \lambda$ for $k \ge N$.
Then $\check\varphi^{\epsilon_k} * \psi_x^\lambda$ is supported in $B(w,\lambda+\epsilon_k)
\subseteq B(w, 2\lambda) $. 
We apply again \Cref{th:keyintest2}
with $G(y):=T(\hat\varphi_y^{\epsilon_k})$:
by \eqref{0eq:hypT} and \eqref{eq:fbar} we can bound 
$\sup_{y \in B(w, \lambda + \epsilon_k)} |G(y)| 
\le C \, \epsilon_k^\alpha \, \bar f(\epsilon_k, w)
\le C \, \epsilon_k^\alpha \, \bar f(\lambda, w)$ which yields, by \eqref{eq:Ge2},
\begin{equation*}
	|b_k| \le C \, 4^d \, 
	\|\check\varphi\|_{L^1} \, \lambda^{-r} \,  \epsilon_k^{\alpha+r} \, \bar{f}(\lambda,w) \,.
\end{equation*}
Since $\alpha+r>0$ by assumption,
we obtain $\sum_{k\geq N}|b_k|<+\infty$ and,
recalling \eqref{eq:recalli}, we can write
$B = T(\psi_x^\lambda) -T(\rho^{\epsilon_N} * \psi_x^\lambda)$
as the converging sequence $B=\sum_{k=N}^\infty b_k$.
Since $\sum_{k=N}^\infty  \epsilon_k^{\alpha+r} = (1-2^{-\alpha-r})^{-1} \, \epsilon_N^{\alpha+r}$,
this yields
\begin{equation}\label{0eq:best}
	|B| \le \sum_{k=N}^\infty \
	|b_k| \le \frac{C \, 4^d \, 
	\|\check\varphi\|_{L^1}}{1-2^{-\alpha-r}}
	\, \lambda^{-r} \,  \epsilon_N^{\alpha+r} \, \bar{f}(\lambda,w) \,.
\end{equation}

\bigskip
\noindent
\emph{Conclusion.} 
By \eqref{0eq:Tdeco}, \eqref{0eq:aest} and \eqref{0eq:best}, 
since $\|\check\varphi\|_{L^1}\leq 2\|\hat\varphi \|_{L^1}$
and $\epsilon_N \le \lambda$, we get
\begin{equation*}
	|T(\psi_x^\lambda) | \le 
	\frac{4^{d-\alpha+1} }{1-2^{-\alpha-r}}
	\, \|\hat\varphi \|_{L^1} \,
	C \, \lambda^\alpha \, \bar{f}(\lambda,w) \,.
\end{equation*}
If we plug the bound \eqref{eq:boundhatvarphi}
and the definition \eqref{eq:Cproof} of $C$, we get
\begin{equation*}
	|T(\psi_x^\lambda) | \le 
	\bigg\{ \frac{4^{d-\alpha+1}}{1-2^{-\alpha-r}} \,
	\bigg(\frac{e^2 \, r}{|\int \varphi|}\bigg)^2 \, \bigg(\frac{2^{-r-1}}{1+R_\varphi}\bigg)^\alpha
	\, \|\varphi \|_{L^1}
	\bigg\} \,\lambda^\alpha \, \bar{f}(\lambda,w) \,.
\end{equation*}
Therefore we have proved \eqref{0eq:conclT}, with the explicit constant
\begin{equation}\label{eq:constb}
	\mathfrak{b}_{\varphi,\alpha,r,d} := 
	\frac{4^{d-\alpha+1} \, e^4 \, 2^{-\alpha(r+1)} \, r^2}{1-2^{-\alpha-r}} \,
	\frac{(1+R_\varphi)^{-\alpha} \, \|\varphi \|_{L^1}}{|\int \varphi|} 
\end{equation}
The proof is complete.
\end{proof}

\section{More on coherent germs}
\label{sec:coherent+}

As an application of \Cref{0th:appetizer}, we show that
the coherence condition \eqref{eq:coherent} can be strengthened,
replacing the test function $\varphi$ by an arbitrary test function,
provided we slightly adjust the exponent $\alpha_K$.

\begin{proposition}[Enhanced coherence]
\label{th:coh}
Let $F = (F_x)_{x\in\R^d}$ be a $\gamma$-coherent germ,
i.e.\ \eqref{eq:coherent} holds for some $\varphi \in \cD$
and some family $\balpha = (\alpha_K)$.
If we define
\begin{equation} \label{eq:balpha'}
	\balpha' = (\alpha'_K) \qquad \text{where} \qquad
	\alpha'_K := \alpha_{\bar K_2} \,,
\end{equation}
then we can replace $\varphi$ in \eqref{eq:coherent} by an arbitrary test function,
provided we replace $\alpha_K$ by $\alpha'_K$. More precisely,
for any compact set $K\subseteq\R^d$ and any $r > -\alpha'_K$ we have
\begin{equation}\label{eq:coherent-unif}
\begin{gathered}
	| (F_z - F_y)(\psi^\epsilon_y) | \lesssim
	\epsilon^{\alpha'_K} \, (|z-y| + \epsilon)^{\gamma - {\alpha'_K}} \\
	\text{uniformly for {$z,y \in K$}}, 
	\ \epsilon \in (0,1] \text{ and } \psi \in \cB_r \,.
\end{gathered}
\end{equation}
It follows that the family of $\gamma$-coherent
germs is a vector space.
\end{proposition}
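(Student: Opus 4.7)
The plan is to apply the enhanced-regularity result \Cref{0th:appetizer} pointwise to the distribution $T := F_z - F_y$ for each fixed pair $y, z \in K$. The coherence hypothesis on $F$ controls $T(\varphi^\epsilon_y)$ with the test function centered precisely at the subtracted index $y$, but \Cref{0th:appetizer} instead requires an estimate on $T(\varphi^\epsilon_x)$ as $x$ ranges over a neighborhood of $y$. The bulk of the work will be to bridge this gap by a triangle inequality trick.

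Specifically, for any auxiliary point $x \in \bar K_2$ I will decompose
\begin{equation*}
(F_z - F_y)(\varphi^\epsilon_x) \,=\, (F_z - F_x)(\varphi^\epsilon_x) \,-\, (F_y - F_x)(\varphi^\epsilon_x).
\end{equation*}
Each summand on the right now has the form $(F_a - F_x)(\varphi^\epsilon_x)$ with the test function centered at the subtracted index, so the coherence condition applied on the compact set $\bar K_2$ (with exponent $\alpha'_K = \alpha_{\bar K_2}$) bounds both. This will yield a constant $C$, depending only on $\vertiii{F}^\co_{\bar K_2, \varphi, \alpha'_K, \gamma}$, such that
\begin{equation*}
|T(\varphi^\epsilon_x)| \,\le\, C \, \epsilon^{\alpha'_K} \bigl[ (|z-x|+\epsilon)^{\gamma-\alpha'_K} + (|y-x|+\epsilon)^{\gamma-\alpha'_K} \bigr]
\end{equation*}
for every $x \in B(y,2) \subseteq \bar K_2$ (the inclusion holds because $y \in K$) and every $\epsilon \in (0,1]$, in particular on all dyadic scales.

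Next I will invoke \Cref{0th:appetizer} with the singleton $\{y\}$ in place of $K$ (its $2$-enlargement is precisely $B(y,2)$), exponent $\alpha'_K \le 0$, and the function $f(\epsilon, x)$ equal to the bracket above. The required integrality condition $r > -\alpha'_K$ is exactly our hypothesis. The conclusion produces, for any $\psi \in \cB_r$ and $\lambda \in (0,1]$,
\begin{equation*}
|(F_z - F_y)(\psi^\lambda_y)| \,\le\, \mathfrak{b}_{\varphi, \alpha'_K, r, d} \, C \, \lambda^{\alpha'_K} \, \bar f(\lambda, y),
\end{equation*}
where $\bar f(\lambda, y)$ is the supremum of $f(\lambda', x')$ over $\lambda' \le \lambda$ and $x' \in B(y, 2\lambda)$. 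Since $\gamma - \alpha'_K \ge 0$, elementary triangle-inequality estimates yield $|z-x'|+\lambda' \le |z-y|+3\lambda$ and $|y-x'|+\lambda' \le 3\lambda$ on this range, whence $\bar f(\lambda, y) \lesssim (|z-y|+\lambda)^{\gamma-\alpha'_K}$ uniformly. Combining this with the previous display produces exactly \eqref{eq:coherent-unif}, with the implicit constant uniform in $y, z \in K$.

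The vector space statement will then be an immediate corollary. Given two $\gamma$-coherent germs $F^{(1)}, F^{(2)}$, possibly defined via different test functions $\varphi^{(1)}, \varphi^{(2)}$, the enhanced coherence just proved shows that each satisfies the coherence condition when tested against a \emph{common} test function (say $\varphi^{(1)}$), at the cost of passing to the exponent family $\alpha'_K$ of \eqref{eq:balpha'}. Any linear combination then satisfies the coherence condition with this common test function and with exponents equal to the minimum of the two adjusted families, and therefore is $\gamma$-coherent. No genuine obstacle arises beyond the triangle-inequality decomposition above, since everything else is bookkeeping once \Cref{0th:appetizer} is available.
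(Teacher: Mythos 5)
Your proposal is correct and follows essentially the same route as the paper: recenter $F_z-F_y$ at an auxiliary point via the triangle inequality, bound both pieces by coherence on $\bar K_2$, and then upgrade to arbitrary $\psi\in\cB_r$ by applying \Cref{0th:appetizer} to $T=F_z-F_y$, finally absorbing the supremum $\bar f$ into $(|z-y|+\lambda)^{\gamma-\alpha'_K}$. Your only cosmetic deviation is invoking \Cref{0th:appetizer} with the singleton $\{y\}$ and a two-term $f$, where the paper uses $f(\epsilon,x)=(|z-x|+|y-x|+\epsilon)^{\gamma-\alpha'_K}$ on $K$ and then sets $x=y$; the two are equivalent.
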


\begin{proof}
Assume that \eqref{eq:coherent-unif} has been proved.
Given an arbitrary test function
$\xi \in \cD$, we can write $\xi = c \, \psi^{\lambda}$
for suitable $c \in \R$, $\lambda \in (0,1]$ and
$\psi \in \cB_r$ (exercise), hence
$\xi_y^\epsilon = c \, \psi^{\lambda \epsilon}_{y}$.
Then it  follows by \eqref{eq:coherent-unif} that we
can replace $\varphi$ by $\xi$ in \eqref{eq:coherent}.

It remains to prove \eqref{eq:coherent-unif}.
It is convenient to center the test function at a third point~$x$,
i.e.\ to replace $\psi_y^\epsilon$ by $\psi_x^\epsilon$. By the triangle 
inequality we can bound
\begin{equation} \label{eq:tria}
	|(F_z - F_y)(\varphi_x^\epsilon)| \le
	|(F_z - F_x)(\varphi_x^\epsilon)| + |(F_y - F_x)(\varphi_x^\epsilon)| \,.
\end{equation}
Let us fix a compact set $K \subseteq \R^d$.
Both terms in the right hand side of \eqref{eq:tria} can be estimated by
the coherence condition \eqref{eq:coherent} for the enlarged set $\bar{K}_2$.
Recalling \eqref{eq:balpha'}, we see that there is $c_K < \infty$ such that
\begin{equation*}
\begin{gathered}
	\forall z,y \in K, \
	\forall x\in \bar{K}_2, \ \ \forall \epsilon \in (0,1]: \\
	|(F_z - F_y)(\varphi_x^\epsilon)| \le c_K \, \epsilon^{\alpha'_K} \,
	(|z-x| + |y-x| + \epsilon)^{\gamma-{\alpha'_K}} \,.
\end{gathered}
\end{equation*}
For fixed $y,z \in K$ we can apply \Cref{0th:appetizer},
with $T = F_z - F_y$ and $f(\epsilon, x) = (|z-x| + |y-x| + \epsilon)^{\gamma-{\alpha'_K}}$.
Given any $r\in\N$ with $r > -{\alpha'_K}$,
relation \eqref{0eq:conclT} yields
\begin{align*}
	\forall z,y,&\, x \in K, \ \  \forall \lambda \in (0,1]\,,  \ \ \forall \psi \in \cB_r \,: \\
	|(F_z - F_y)(\psi_x^\lambda)| &\le 
	\mathfrak{b}_{\varphi,{\alpha'_K},r,d} \, \lambda^{\alpha'_K} \,
	(|z-x| + |y-x| + 5\lambda)^{\gamma-{\alpha'_K}} \\
	& \lesssim  \lambda^{\alpha'_K} \,
	(|z-x| + |y-x| + \lambda)^{\gamma-{\alpha'_K}} \,.
\end{align*}
If we plug $x = y$ we obtain \eqref{eq:coherent-unif}.
\end{proof}

We now show that also the local homogeneity relation \eqref{eq:bounded-order-germ}
can be strengthened, replacing $\varphi$ by an arbitrary test function,
provided we slightly adjust $\beta_K$.

\begin{proposition}[Enhanced local homogeneity]
\label{pr:enhom}
Let $F = (F_x)_{x\in\R^d}$ be a $\gamma$-coherent germ
with local homogeneity bounds $\bbeta = (\beta_K)$, see \eqref{eq:bounded-order-germ}. 
If we set
\begin{equation*}
	\bbeta' = (\beta'_K) \qquad \text{where} \qquad \beta'_K := \beta_{\bar{K}_2} \,,
\end{equation*}
then we can replace $\varphi$ in \eqref{eq:bounded-order-germ} by an
arbitrary test function,
provided we replace $\beta_K$ by $\beta'_K$. More precisely,
for any compact set $K\subseteq\R^d$ and any $r > \max\{-\alpha'_K, - \beta'_K\}$,
with $\alpha'_K$ defined in \eqref{eq:balpha'}, we have
\begin{equation} \label{eq:bounded-order-germ-unif2}
	\begin{gathered}
	|F_x(\psi^\epsilon_x)| \lesssim \, \epsilon^{\beta'_K} \\
	\text{uniformly for $x \in K$, $\epsilon \in (0,1]$ and $\psi \in \cB_{ r}$} \,.
\end{gathered}
\end{equation}
\end{proposition}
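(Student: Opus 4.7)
I would adapt the multiscale argument from the proof of the Reconstruction Theorem. Fix a compact $K$, write $\alpha := \alpha_{\bar K_2}$, $\beta := \beta_{\bar K_2}$, and take $r > \max\{-\alpha,-\beta\}$ as in the statement. Build the tweaked test function $\hat\varphi = \hat\varphi^{[r]}$ from \Cref{th:coherence-r}, so that $\hat\varphi$ is supported in $B(0,1/2)$, integrates to $1$, and annihilates monomials of degree $1,\dots,r-1$. An argument identical to the one in \Cref{th:coherence-r}, using the same expansion $\hat\varphi = (\int\varphi)^{-1}\sum_i c_i \, \varphi^{\lambda_i}$ and the $|c_i| \le e^2$ bound of \eqref{eq:boundonci}, upgrades the homogeneity hypothesis to $|F_y(\hat\varphi_y^\epsilon)| \lesssim \epsilon^\beta$ uniformly for $y \in \bar K_2$ and $\epsilon \in (0,1]$. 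In parallel, coherence gives $|(F_w - F_y)(\hat\varphi_y^\epsilon)| \lesssim \epsilon^\alpha(|w-y|+\epsilon)^{\gamma-\alpha}$ on $\bar K_2$.

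Next, I fix $x \in K$, $\psi \in \cB_r$ and $\lambda \in (0,1]$, and let $N \ge 1$ be minimal with $\epsilon_N := 2^{-N} \le \lambda$, so that $\epsilon_N \asymp \lambda$. Setting $\rho := \hat\varphi^2 * \hat\varphi$ and $\check\varphi := \hat\varphi^{1/2} - \hat\varphi^2$, the mollifier property (\Cref{th:molli}) together with the telescoping identity $\rho^{\epsilon_{k+1}} - \rho^{\epsilon_k} = \hat\varphi^{\epsilon_k} * \check\varphi^{\epsilon_k}$ from \eqref{0eq:diffrho} yields
\begin{equation*}
F_x(\psi_x^\lambda) \;=\; F_x(\rho^{\epsilon_N}*\psi_x^\lambda) \;+\; \sum_{k=N}^{\infty} F_x\bigl((\rho^{\epsilon_{k+1}}-\rho^{\epsilon_k})*\psi_x^\lambda\bigr).
\end{equation*}
Expanding each convolution via \eqref{eq:Tonconv} turns every summand into $\int F_x(\hat\varphi_y^\epsilon)\,G(y)\,\mathrm{d}y$, with $G = \hat\varphi^{2\epsilon_N}*\psi_x^\lambda$ for the first term and $G = \check\varphi^{\epsilon_k}*\psi_x^\lambda$ for the series terms, the integrand being supported in $B(x,\lambda+\epsilon)\subseteq B(x,2\lambda)$.

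The crux is the pointwise bound on $F_x(\hat\varphi_y^\epsilon)$ in that region. Splitting $F_x = F_y + (F_x - F_y)$ and inserting the two setup estimates gives
\begin{equation*}
|F_x(\hat\varphi_y^\epsilon)| \;\lesssim\; \epsilon^\beta + \epsilon^\alpha(|x-y|+\epsilon)^{\gamma-\alpha} \;\lesssim\; \epsilon^\beta + \epsilon^\alpha \lambda^{\gamma-\alpha}.
\end{equation*}
Feed this into \Cref{th:keyintest2}: \eqref{eq:Ge1} bounds the first piece by $\sup_{B(x,2\lambda)}|F_x(\hat\varphi_y^{\epsilon_N})| \lesssim \lambda^\beta + \lambda^\gamma \lesssim \lambda^\beta$, while \eqref{eq:Ge2} supplies the decisive extra factor $(\epsilon_k/\lambda)^r$ for each series term (since $\epsilon_k \le \lambda$). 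The conditions $r+\beta > 0$ and $r+\alpha > 0$ make the geometric series in $\epsilon_k$ convergent, producing contributions bounded by $\sum_{k \ge N} (\epsilon_k/\lambda)^r \epsilon_k^\beta \lesssim \lambda^\beta$ and $\sum_{k \ge N} (\epsilon_k/\lambda)^r \epsilon_k^\alpha \lambda^{\gamma-\alpha} \lesssim \lambda^\gamma$, respectively. Both are $\lesssim \lambda^\beta$ thanks to $\beta < \gamma$ (from \Cref{th:boundor}) and $\lambda \le 1$, giving $|F_x(\psi_x^\lambda)| \lesssim \lambda^\beta$ as required.

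The main obstacle is the case $\alpha > \beta$, exemplified by the constant germ $F_x \equiv T$ with $T(y) = |y|^{-1/2}$ of \Cref{ex:homconst}: the coherence contribution $\epsilon^\alpha\lambda^{\gamma-\alpha}$ to the pointwise bound of $|F_x(\hat\varphi_y^\epsilon)|$ can be much larger than the target $\epsilon^\beta$, so a single triangle inequality combined with \Cref{0th:appetizer} (which preserves the exponent) cannot reach the desired rate $\lambda^\beta$. The multiscale decomposition overcomes this by exploiting the moment cancellation built into $\check\varphi$: via \eqref{eq:Ge2}, the defect is repackaged into a summable scale-by-scale contribution of order $\lambda^\gamma$, which is then absorbed by the target $\lambda^\beta$ using $\beta < \gamma$.
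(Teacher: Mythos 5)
Your argument is correct, but it takes a genuinely different route from the paper. The paper proves this proposition as a soft corollary of the Reconstruction Theorem: with $f=\cR F$, the bound \eqref{eq:reco+1} gives $|(f-F_x)(\psi_x^\epsilon)|\lesssim\epsilon^\gamma$ (or $1+|\log\epsilon|$ if $\gamma=0$) uniformly over $\cB_r$, \Cref{pr:image} gives $f\in\cC^\beta$ (or $f=0$ if $\beta>0$) so that $|f(\psi_x^\epsilon)|\lesssim\epsilon^\beta$, and the triangle inequality together with $\beta<\gamma$ yields \eqref{eq:bounded-order-germ-unif2} in three lines. You instead rerun the multiscale machinery directly on the germ: tweak $\varphi$ into $\hat\varphi$, mollify with $\rho=\hat\varphi^2*\hat\varphi$, telescope over dyadic scales $\epsilon_k\le\lambda$, and control each scale via \Cref{th:keyintest2} after splitting $F_x=F_y+(F_x-F_y)$ under the integral. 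Your scale-by-scale estimates are right: the mollified term is $\lesssim\lambda^\beta+\lambda^\gamma$, and the series contributes $\lesssim\lambda^{-r}\big(\lambda^{r+\beta}+\lambda^{\gamma-\alpha}\lambda^{r+\alpha}\big)=\lambda^\beta+\lambda^\gamma\lesssim\lambda^\beta$, using $r>\max\{-\alpha,-\beta\}$, $\epsilon_N\asymp\lambda$ and $\beta<\gamma$. What your route buys is self-containedness: it uses only the tweaking construction, the mollifier lemma and the convolution estimates, and it is essentially the proof of \Cref{0th:appetizer} adapted to a majorant that mixes the homogeneity and coherence bounds; in particular it bypasses the full Reconstruction Theorem (and hence its localisation step for $\gamma\le 0$). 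The paper's route buys brevity, at the price of invoking \Cref{th:reco+} and \Cref{pr:image}.

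One remark on your closing paragraph: the diagnosis there is backwards, although this does not affect your proof. When $\alpha\ge\beta$ the coherence defect obeys $\epsilon^\alpha(|x-y|+\epsilon)^{\gamma-\alpha}\lesssim\epsilon^\alpha\lambda^{\gamma-\alpha}\le\epsilon^\beta\lambda^{\gamma-\beta}\le\epsilon^\beta$ for $\epsilon\le\lambda\le 1$, so a single triangle inequality followed by \Cref{0th:appetizer} with a bounded majorant does reach the rate $\lambda^\beta$; the genuinely problematic case is $\alpha<\beta$, where $\epsilon^{\alpha-\beta}$ blows up as $\epsilon\downarrow 0$ and the supremum $\bar f$ of \eqref{eq:fbar} becomes infinite, which is precisely where the scale-by-scale summation (or the paper's detour through $\cR F$) is needed. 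Moreover, the constant germ of \Cref{ex:homconst} has $F_z-F_y=0$, hence zero coherence defect, so it does not exhibit the difficulty you describe. Since your actual estimates treat both orderings of $\alpha$ and $\beta$ uniformly, this is only a flaw in the motivation, not in the argument.
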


\begin{proof}
We apply the Reconstruction Theorem:
let $f = \cR F$ is a reconstruction of $F$.
Fix a compact set $K \subseteq \R^d$ and
$r > \max\{- \alpha_{\bar{K}_2}, - \beta_{\bar{K}_2}\}$.
Then, by \eqref{eq:reco+1}, 
\begin{equation*}
	|(f-F_x)(\psi_x^\epsilon)| \lesssim \begin{cases}
	\epsilon^\gamma & \text{if } \gamma \ne 0 \\
	\big(1+|\log \epsilon|\big) & \text{if } \gamma = 0
	\end{cases}
\end{equation*}
uniformly for $x \in K$, $\epsilon \in (0,1]$ and $\psi \in \cB_{r}$.
Since $f \in \cC^{\beta}$ by \Cref{pr:image}, we have
\begin{equation*}
	|f(\psi_x^\epsilon)| \lesssim \epsilon^{\beta}
\end{equation*}
uniformly for $x \in K$, $\epsilon \in (0,1]$ and
$\psi \in \cB_{r}$. Since $\beta < \gamma$, we finally get
\begin{equation*}
	|F_x(\psi_x^\epsilon)| \le |(F_x-f)(\psi_x^\epsilon)| + |f(\psi_x^\epsilon)|
	\lesssim \epsilon^{\beta},
\end{equation*}
uniformly for $x \in K$, $\epsilon \in (0,1]$ and $\psi \in \cB_{r}$.
This proves \eqref{eq:bounded-order-germ-unif2}.
\end{proof}

\section{Young product of functions and distributions}
\label{sec:Young}

As an application of the Reconstruction Theorem,
we prove that there is a canonical definition of product between
a H\"older function $f \in \cC^\alpha$, with $\alpha > 0$,
and a H\"older distribution $g \in \cC^\beta$, with $\beta \le 0$,
provided $\alpha + \beta > 0$.  This classical result has been obtained with 
wavelets analysis or Bony's paraproducts, see e.g. \cite[Theorem~1 in Section~4.4.3]{rs96}, 
\cite[Theorem~2.52]{BCD11} and \cite[Proposition~4.14]{Hairer2014d}. 
Our proof of the Reconstruction Theorem provides a new approach to this result,
which bypasses Fourier analysis and
applies to general (non tempered) distributions.
In the case $\alpha+\beta\leq 0$, 
a non-unique and non-canonical ``product'' can still be constructed.

We start with some general considerations. 
Given any distribution $g \in \cD'$ and any smooth function 
$f \in C^\infty$, their product $P = g \cdot f$ is canonically defined by
\begin{equation*}
	P(\varphi) = 	(g \cdot f)(\varphi) := g(\varphi \, f) \,, \qquad \forall \varphi \in \cD \,.
\end{equation*}
If $f \in \cC^\alpha$ with $\alpha > 0$ this no longer makes sense,
as $\varphi \, f $ might not be a test function. However we can still
give a \emph{local definition of $g \cdot f$ close to a point $x\in\R^d$},
replacing $f$ by its Taylor polynomial $F_x$ of order 
$\underline{r}(\alpha):=\max\{n\in\N_0:n<\alpha\}$ based at $x$:
\begin{equation}\label{eq:polyeffe}
	F_x(\cdot):=\sum_{0 \le |k|< \alpha} \partial^k f(x) \, \frac{(\cdot-x)^k}{k!} \,.
\end{equation}
This leads us to define the germ $P = (P_x:=g\cdot F_x)_{x\in\R^d}$, that is
\begin{equation}\label{eq:Pprod}
	P_x(\varphi) = (g \cdot F_x)(\varphi) :=g(\varphi\, F_x), \qquad \varphi\in\cD\, .
\end{equation}
We can now state the following result.

\begin{theorem}[Young product]\label{th:Young}
Fix $\alpha > 0$ and $\beta \le 0$. 
\begin{itemize}
\item If $\alpha+\beta > 0$, there exists 
a bilinear continuous map $\cM: \cC^{\alpha}\times\cC^\beta\to \cC^\beta$
which extends the usual product $\cM(f,g) = f \cdot g$ when $f \in C^\infty$.
This map is characterized by the following property: for any $r \in \N$
with $r > -\beta$
\begin{equation}\label{eq:propM}
\begin{gathered}
	\big| (\cM(f,g) - g \cdot F_x )(\psi_x^\lambda) \big|
	\lesssim 
	\begin{cases}
	\lambda^{\alpha+\beta} & \text{if } \alpha+\beta \ne 0 \\
	\big(1+|\log \lambda|\big) & \text{if } \alpha+\beta = 0
	\end{cases}  \\
	\text{uniformly for $x$ in compact sets, $\lambda \in (0,1]$ 
	and $\psi \in \cB_r$} \,,
\end{gathered}
\end{equation}
where $F_x$ is the Taylor polynomial of $f$ based at $x$, see
\eqref{eq:polyeffe}.

\item If $\alpha+\beta \le 0$, 
there exists a bilinear continuous map $\cM: \cC^{\alpha}\times\cC^\beta\to \cC^\beta$
which satisfies property \eqref{eq:propM}. This map is neither unique nor canonical.
However, 
for $\alpha+\beta < 0$ any two maps $\cM, \cM'$ which satisfy property \eqref{eq:propM} 
must differ by a map in $\cC^{\alpha+\beta}$,
i.e.\ we must have $\cM - \cM' : \cC^{\alpha}\times\cC^\beta\to  \cC^{\alpha+\beta}$.
\end{itemize}
\end{theorem}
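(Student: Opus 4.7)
The plan is to realize $\cM(f,g)$ as the reconstruction of the germ $P = (P_x)_{x\in\R^d}$ with $P_x := g\cdot F_x$ defined in \eqref{eq:Pprod}, where $F_x$ is the Taylor polynomial of $f$ at $x$.

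First I would verify that $P$ is $(\beta,\alpha+\beta)$-coherent with global homogeneity bound $\beta$. The identity \eqref{eq:F-F} gives $F_z-F_y = -\sum_{|k|<\alpha} R^k(y,z)\,(\cdot-y)^k/k!$ with $|R^k(y,z)| \lesssim \|f\|_{\cC^\alpha}\,|y-z|^{\alpha-|k|}$, while the scaling identity $(w-y)^k\,\varphi^\epsilon_y(w) = \epsilon^{|k|}(\tilde\varphi^k)^\epsilon_y(w)$ with $\tilde\varphi^k(u) := u^k\varphi(u) \in \cD$ together with $g\in\cC^\beta$ yields $|g((\tilde\varphi^k)^\epsilon_y)| \lesssim \|g\|_{\cC^\beta}\,\epsilon^\beta$. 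Combining these gives
\[
|(P_z-P_y)(\varphi^\epsilon_y)| \,\lesssim\, \sum_{|k|<\alpha} |z-y|^{\alpha-|k|}\,\epsilon^{|k|+\beta},
\]
which is precisely the framework of \Cref{ex:regstruc} with $A=\{\beta+j:0\le j\le\underline{r}(\alpha)\}$ and $\gamma=\alpha+\beta$, proving $(\beta,\alpha+\beta)$-coherence. An analogous computation on $P_x(\varphi_x^\epsilon) = \sum_{|k|<\alpha}\frac{\partial^k f(x)}{k!}\epsilon^{|k|}\,g((\tilde\varphi^k)^\epsilon_x)$ gives the homogeneity bound $\beta$.

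Given coherence, I would apply the Reconstruction Theorem to $P$ and define $\cM(f,g) := \cR P$. When $\alpha+\beta>0$, uniqueness in \Cref{th:reco+} makes this canonical, estimate \eqref{eq:reco+1} translates directly into \eqref{eq:propM}, and $\cM(f,g)\in\cC^\beta$ by \Cref{pr:image}. Bilinearity follows because $(f,g)\mapsto P$ is bilinear (Taylor polynomials depend linearly on $f$ and $P_x$ depends linearly on $g$), while the reconstruction is linear. For continuity, the constants tracking $\vertiii{P}^\co$ and $\vertiii{P}^\ho$ in the coherence/homogeneity estimates are controlled by $\|f\|_{\cC^\alpha(\bar{K}_R)}\|g\|_{\cC^\beta(\bar{K}_R)}$, which combined with \eqref{eq:cont2} gives the required continuity of $\cM: \cC^\alpha\times\cC^\beta\to\cC^\beta$. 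To verify the extension property for $f\in C^\infty$, I would show that $f\cdot g$ itself satisfies \eqref{eq:propM}: writing $(f\cdot g - g\cdot F_x)(\psi^\lambda_x) = g((f-F_x)\psi^\lambda_x)$ and rescaling $(f-F_x)\psi^\lambda_x = \lambda^\alpha\,\tilde\chi^\lambda_x$ with $\tilde\chi(z) := \lambda^{-\alpha}(f-F_x)(x+\lambda z)\,\psi(z)$, the $\cC^\beta$ estimate on $g$ applied to $\tilde\chi/\|\tilde\chi\|_{C^r}\in\cB_r$ yields the required $\lambda^{\alpha+\beta}$ bound; uniqueness then forces $\cM(f,g)=f\cdot g$.

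When $\alpha+\beta\le 0$, the map $(f,g)\mapsto P$ takes values in the vector space of $(\beta,\alpha+\beta)$-coherent germs with global homogeneity bound $\beta$, so the last clause of \Cref{th:reco+} provides a linear choice of reconstruction on this space, producing a bilinear $\cM$ satisfying \eqref{eq:propM}; continuity again follows from \eqref{eq:cont2}. The non-uniqueness statement for $\alpha+\beta<0$ is immediate from \Cref{th:charHolder}: if $\cM,\cM'$ both satisfy \eqref{eq:propM}, then by the triangle inequality $|(\cM(f,g)-\cM'(f,g))(\psi^\lambda_x)|\lesssim\lambda^{\alpha+\beta}$, which characterizes membership in $\cC^{\alpha+\beta}$. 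The main obstacle I anticipate is the $\cC^r$ estimate on $\tilde\chi$ needed in the extension step, since for $r>\underline{r}(\alpha)$ the factors $\lambda^{|k|-\alpha}$ appearing in $\partial^k\tilde\chi$ must be absorbed uniformly; this requires exploiting that $f$ is smooth enough that $\partial^k f$ is locally bounded for $|k|\le r$, which holds trivially for $f\in C^\infty$ but needs attention in the general continuity argument.
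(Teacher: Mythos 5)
Your proposal is correct and follows essentially the same route as the paper: you verify that the germ $P_x = g\cdot F_x$ is $(\beta,\alpha+\beta)$-coherent with homogeneity bound $\beta$ (the content of \Cref{pr:mult}, which you obtain via \Cref{ex:regstruc} rather than by summing the bound directly), then define $\cM(f,g):=\cR P$ and invoke linearity of the reconstruction, \eqref{eq:reco+1} for \eqref{eq:propM}, \eqref{eq:cont2} for continuity, and \Cref{th:charHolder} for the $\cC^{\alpha+\beta}$ difference when $\alpha+\beta<0$. Your explicit check that $f\cdot g$ itself satisfies \eqref{eq:propM} for smooth $f$ (via the rescaled function $\tilde\chi$, whose $C^r$ norm is indeed uniformly bounded by the computation you sketch) is a welcome detail that the paper's proof leaves implicit, relying on uniqueness exactly as you do.
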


\begin{remark}
For \emph{fixed} $\alpha > 0$ and $\beta \le 0$ with $\alpha+\beta > 0$,
we cannot claim that $\cM: \cC^{\alpha}\times\cC^\beta\to \cC^\beta$
is the unique continuous map
which extends the usual product $\cM(f,g) = f \cdot g$ when $f \in C^\infty$, simply
because $C^\infty$ is not dense in $\cC^\alpha$. On the other hand, 
given any $\beta \le 0$, we can state that 
\emph{$\cM: \bigcup_{\alpha > -\beta}\cC^{\alpha}\times\cC^\beta\to \cC^\beta$
is indeed the unique continuous map which extends the usual product}, 
because $C^\infty$
is dense in $\cC^\alpha$ with respect to the topology of $\cC^{\alpha'}$, for any $\alpha' < \alpha$.
\end{remark}

\begin{remark}
For $\alpha+\beta \le 0$ the ``product'' $\cM$
that we construct is 
\emph{non-local}, as can be inferred from the proof of the Reconstruction Theorem.
This is reminiscent of the para-products studied by Gubinelli-Imkeller-Perkowski \cite{GIP15}.
\end{remark}

Before proving \Cref{th:Young} we need some preparation.
We recall that the negative H\"older space $\cC^\beta$ with $\beta \le 0$ is equipped
with the family of semi-norms $\|\cdot\|_{\cC^\beta(K)}$ defined in \eqref{eq:normCalphaneg},
for compact sets $K \subseteq \R^d$.
We now introduce a corresponding family of semi-norms  $\|\varphi\|_{\cC^\alpha(K)} $
for \emph{positive} H\"older spaces $\cC^\alpha$ with $\alpha > 0$. Recall that
\begin{equation*}
	\underline{r}(\alpha):=\max\{n\in\N_0:n<\alpha\} \,.
\end{equation*}
Then, given a compact set $K\subseteq \R^d$, we define $\|\cdot\|_{\cC^\alpha(K)}$
by taking the maximum between between $\|f\|_{C^{\underline{r}}(K)}$ and 
the best implicit constant in \eqref{eq:holdergamma} when $x,y \in K$:
\begin{equation}\label{eq:Holderalphanorm}
	\|f\|_{\cC^\alpha(K)} 
	:= \max\bigg\{ \|f\|_{C^{\underline{r}}(K)}, \, 
	\sup_{x,y \in K} \frac{|f(y)-F_x(y)|}{|y-x|^\alpha} \bigg\} \,.
\end{equation}
We can now formulate more precisely the continuity of $\cM$ stated in \Cref{th:Young}:
we are going to prove that for every compact set $K \subseteq \R^d$
\begin{equation}\label{eq:contcM}
	\|\cM(f,g)\|_{\cC^\beta(K)} \lesssim
	\|f\|_{\cC^\alpha(\bar{K}_4)} \, \|g\|_{\cC^\beta(\bar{K}_4)} \,.
\end{equation}

To prove \Cref{th:Young}, we first quantify the coherence of the germ $P$ in \eqref{eq:Pprod}.

\begin{proposition}\label{pr:mult}
If $f \in \cC^\alpha$ and $g \in \cC^\beta$, with
$\alpha>0$ and $\beta \le 0$, then the germ 
$P = (P_x)_{x\in\R^d}$ is $(\beta,\alpha+\beta)$-coherent 
and has homogeneity
bounded below by $\beta$.
\end{proposition}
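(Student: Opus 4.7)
The plan is to use the explicit Taylor remainder identity from \Cref{ex:taylor} together with the fact that $g \in \cC^\beta$ controls its action on any fixed, rescaled test function. First I would fix a single convenient test function $\varphi \in \cD$ supported in $B(0,1)$ with $\int \varphi \ne 0$, and introduce for each multi-index $k$ with $|k| < \alpha$ the auxiliary test function $\tilde\varphi^{(k)}(u) := u^k \, \varphi(u)$. Each $\tilde\varphi^{(k)}$ is again a smooth compactly supported function on $B(0,1)$, so (up to dividing by the finite constant $\|\tilde\varphi^{(k)}\|_{C^{r_\beta}}$) it lies in $\cB_{r_\beta}$. Consequently \Cref{th:charHolder} gives the uniform key estimate
\[
|g(\tilde\varphi^{(k)}{}_y^\epsilon)| \, \lesssim \, \epsilon^\beta
\qquad \text{uniformly for $y$ in compact sets and } \epsilon \in (0,1].
\]

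For the coherence bound, I would use the identity \eqref{eq:F-F}, which gives
\[
(P_z - P_y)(\varphi_y^\epsilon) \, = \, g\big((F_z - F_y) \, \varphi_y^\epsilon\big) \, = \, - \sum_{|k| < \alpha} \frac{R^k(y,z)}{k!} \; g\big((\cdot - y)^k \, \varphi_y^\epsilon\big) \,,
\]
with $|R^k(y,z)| \lesssim |y-z|^{\alpha - |k|}$ uniformly on compacts. The elementary scaling identity $(w-y)^k \, \varphi_y^\epsilon(w) = \epsilon^{|k|} \, \tilde\varphi^{(k)}{}_y^\epsilon(w)$ turns the inner factor into $\epsilon^{|k|} \, g(\tilde\varphi^{(k)}{}_y^\epsilon)$, which is bounded by $\epsilon^{|k| + \beta}$ by the estimate above. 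Summing the finitely many terms and using $|y-z|^{\alpha - |k|} \, \epsilon^{|k|} \le (|y-z| + \epsilon)^\alpha$ yields
\[
|(P_z - P_y)(\varphi_y^\epsilon)| \, \lesssim \, \epsilon^{\beta} \, (|z-y| + \epsilon)^{\alpha} \,,
\]
which is exactly the coherence inequality \eqref{eq:coherent} with $\gamma = \alpha + \beta$ and $\alpha_K = \beta$. The admissibility condition $\beta \le \min\{0,\alpha+\beta\}$ is automatic since $\beta \le 0$ and $\alpha > 0$.

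For the homogeneity bound I would expand the polynomial $F_x$ directly and write, again using the scaling identity,
\[
P_x(\varphi_x^\epsilon) \, = \, g(F_x \, \varphi_x^\epsilon) \, = \, \sum_{|k| < \alpha} \frac{\partial^k f(x)}{k!} \, \epsilon^{|k|} \, g(\tilde\varphi^{(k)}{}_x^\epsilon) \,.
\]
Bounding $|\partial^k f(x)| \le \|f\|_{\cC^\alpha(K)}$ and inserting the $\cC^\beta$ estimate for $g$ gives $|P_x(\varphi_x^\epsilon)| \lesssim \epsilon^\beta \sum_{|k| < \alpha} \epsilon^{|k|} \lesssim \epsilon^\beta$ for $\epsilon \in (0,1]$. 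I do not expect any serious obstacle: once one notices that multiplying the scaled test function $\varphi_y^\epsilon$ by the monomial $(\cdot - y)^k$ merely produces the scaled test function $\tilde\varphi^{(k)}{}_y^\epsilon$ (with the same base point $y$ and same scale $\epsilon$) up to the prefactor $\epsilon^{|k|}$, both estimates reduce to a finite-sum bookkeeping combining the Taylor identity \eqref{eq:F-F} with the definition of $\cC^\beta$.
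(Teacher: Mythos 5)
Your proposal is correct and follows essentially the same route as the paper: the Taylor remainder identity \eqref{eq:F-F}, the scaling identity $(\cdot-y)^k\varphi_y^\epsilon = \epsilon^{|k|}\,\tilde\varphi^{(k)}{}_y^\epsilon$, and the $\cC^\beta$ bound on $g$ applied to these finitely many auxiliary test functions, for both the coherence and the homogeneity estimates. The only cosmetic difference is that you invoke \Cref{th:charHolder} where the definition \eqref{eq:Holderneg} of $\cC^\beta$ (applied to $\tilde\varphi^{(k)}/\|\tilde\varphi^{(k)}\|_{C^{r_\beta}} \in \cB_{r_\beta}$) already suffices, exactly as in the paper.
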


\begin{proof}
We are going to show that
there is a test function $\varphi\in\cD(B(0,1))$ with $\int\varphi \ne 0$ such that,
for every compact set $K\subset\R^d$,
the following relations hold:
\begin{align}\label{eq:coherent-P}
	| (P_z - P_y)(\varphi^\epsilon_y) | 
	& \lesssim \| f \|_{\cC^\alpha(K)} \, 
	\|g\|_{\cC^\beta(K)} \,
	\epsilon^{\beta} \, (|z-y| + \epsilon)^{\alpha} \,, \\
	\label{eq:hom-P}
	| P_x(\varphi^\epsilon_x)| 
	& \lesssim \| f \|_{\cC^\alpha(K)} \, 
	\|g\|_{\cC^\beta(K)} \, \epsilon^\beta \,,
\end{align}
uniformly for $x, y,z \in K$ and $\epsilon \in (0,1]$.
Throughout this proof, all implicit constants hidden in the notation $\lesssim$ may depend
on the parameters $\alpha, \beta$, but but not on $K, f, g$.

We first prove \eqref{eq:coherent-P}.
Let us fix a compact set $K\subset\R^d$
and we set $r = r_\beta := \min\{r \in \N: \ r > -\beta\}$.
By \eqref{eq:Holderneg} {applied to $\psi / \|\psi\|_{C^r}$} we can bound,
recalling \eqref{eq:normCalphaneg},
\begin{equation}\label{eq:Holbo}
	\left| g(\psi^\epsilon_y) \right| \leq \|g\|_{\cC^\beta(K)}\, \|\psi\|_{C^r} \,  \epsilon^\beta
	\qquad \text{for all } \epsilon \in (0,1]\, , \ \psi\in\cD(B(0,1))\, , \ y\in K\, .
\end{equation}
Fix now any $\varphi\in\cD(B(0,1))$
with $\int\varphi \ne 0$ and $\|\varphi\|_{C^r} \le 1$. By \eqref{eq:F-F}, for any $y,z \in K$
\[
\begin{split}
(P_z - P_y)(\varphi^\epsilon_y)=& -\sum_{0 \le |k|< \alpha} g\big({(\cdot-y)^k}\, \varphi^\epsilon_y\big)
\,\frac{R^k(y,z)}{k!}
\end{split}
\]
where $|R^k(y,z)| \lesssim \|f\|_{\cC^\alpha(K)} \, |z-y|^{\alpha-|k|}$. 
We have for fixed $y\in\R^d$, $k\in\N_0^d$ and $\epsilon>0$
\begin{equation*}
	(w-y)^k\, \varphi^\epsilon_y(w) =\epsilon^{|k|} \, \psi^\epsilon_y(w)\,, 
	\quad \text{where} \quad
	\psi(w):=w^k\, \varphi(w) \, .
\end{equation*}
Then $\psi\in\cD(B(0,1))$ and $\|\psi\|_{C^r} \lesssim \|\varphi\|_{C^r} \le 1$, hence
it follows by \eqref{eq:Holbo} that
\begin{equation}\label{eq:gpolyf}
	|g\big({(\cdot-y)^k}\, \varphi^\epsilon_y\big)|
	= \epsilon^{|k|} \, g\big(\psi_y^\epsilon\big) \lesssim
	\|g\|_{\cC^\beta(K)} \,  \epsilon^{\beta+|k|} \, .
\end{equation}
We thus obtain, uniformly for $z,y \in K$ and $\epsilon \in (0,1]$,
\[
\begin{split}
	| (P_z - P_y)(\varphi^\epsilon_y) | 
	& \lesssim \|f\|_{\cC^\alpha(K)} \, 
	\|g\|_{\cC^\beta(K)} \,  \sum_{0 \le |k|<\alpha} \epsilon^{\beta+|k|}\,  |z-y|^{\alpha-|k|}\\
	& \lesssim \|f\|_{\cC^\alpha(K)} \, \|g\|_{\cC^\beta(K)} \,  \epsilon^\beta(|z-y|+\epsilon)^\alpha,
\end{split}
\]
which completes the proof of \eqref{eq:coherent-P}.

We next prove \eqref{eq:hom-P}.
By \eqref{eq:polyeffe} and \eqref{eq:Pprod},
recalling \eqref{eq:Holderalphanorm} and \eqref{eq:gpolyf}, we obtain
\[
\begin{split}
	\left|P_x(\varphi^\epsilon_x)\right| & \leq 
	\sum_{0\leq |k|<\gamma} \left|g\left({(\cdot-x)^k}\, \varphi^\epsilon_x\right)\right|\left|
	\,\frac{\partial^k f(x)}{k!}\right|
	\lesssim \|g\|_{\cC^\beta(K)} \,  \sum_{0\leq |k|<\gamma}  \epsilon^{\beta+|k|}
	\left|\,\frac{\partial^k f(x)}{k!}\right| \\
	& \lesssim \|f\|_{\cC^\alpha(K)} \, \|g\|_{\cC^\beta(K)} \,
	\sum_{0\leq |k|<\gamma}  \epsilon^{\beta+|k|}
	\lesssim \|f\|_{\cC^\alpha(K)} \, \|g\|_{\cC^\beta(K)} \,\epsilon^\beta\, ,
\end{split}
\]
uniformly for $x$ in compact sets and $\epsilon \in (0,1]$.
This completes the proof.
\end{proof}

We can finally give the proof of \Cref{th:Young}.

\begin{proof}[Proof of \Cref{th:Young}]
We know that the germ $P$ in \eqref{eq:Pprod}
is $(\alpha,\alpha+\beta)$-coherent and has local homogeneity bound
$\beta$, by \Cref{pr:mult}.
We also know by \Cref{pr:image} that $\cR P$ belongs to $\cC^\beta$ 
(note that $\beta<\alpha+\beta$).
Since the map $P \mapsto \cR P$ is linear, and
since $P$ is a bilinear function of $(f,g)$, it follows that
we can define a bilinear map 
\[
	{\mathcal M}:\cC^{\alpha}\times\cC^\beta\to \cC^\beta, \qquad {\mathcal M}(f,g):=\cR P\, .
\]
Property \eqref{eq:propM} is a translation of
\eqref{eq:reco+1}, which characterizes $\cM$ if and only if $\alpha+\beta>0$.

Note that by \eqref{eq:cont2}
\[
	\|\cM(f,g)\|_{\cC^\beta(K)}
	\lesssim \Big( \vertiii{P}^\co_{{\bar K_4}, \varphi, \alpha, \gamma}
	+
	\vertiii{P}^\ho_{{\bar{K}_2},\varphi, \beta}\Big) \,.
\]
It follows by the estimates \eqref{eq:coherent-P}-\eqref{eq:hom-P} 
in the proof of \Cref{pr:mult} that
\[
	\vertiii{P}^\co_{{\bar K_4}, \varphi, \alpha, \gamma}
	+
	\vertiii{P}^\ho_{{\bar{K}_2},\varphi, \beta}
	\lesssim \|g\|_{\cC^\beta({\bar K_4})} \, 
	\|f\|_{\cC^\alpha({\bar K_4})} \,,
\]
which proves \eqref{eq:contcM}, hence $\cM$ is a \emph{continuous map}.
\end{proof}

\frenchspacing


\begin{thebibliography}{FdLP06}
\expandafter\ifx\csname url\endcsname\relax
  \def\url#1{\texttt{#1}}\fi
\expandafter\ifx\csname urlprefix\endcsname\relax\def\urlprefix{URL }\fi
\expandafter\ifx\csname href\endcsname\relax
  \def\href#1#2{#2}\fi
\expandafter\ifx\csname burlalt\endcsname\relax
  \def\burlalt#1#2{\href{#2}{\texttt{#1}}}\fi

\bibitem[BCD11]{BCD11}
\textsc{H.~Bahouri}, \textsc{J.-Y. Chemin}, and \textsc{R.~Danchin}.
\newblock \emph{Fourier analysis and nonlinear partial differential equations},
  vol. 343 of \emph{Grundlehren der Mathematischen Wissenschaften [Fundamental
  Principles of Mathematical Sciences]}.
\newblock Springer, Heidelberg, 2011.

\bibitem[FdLP06]{FeDe06}
\textsc{D.~Feyel} and \textsc{A.~de~La~Pradelle}.
\newblock Curvilinear integrals along enriched paths.
\newblock \emph{Electron. J. Probab.} \textbf{11}, (2006), no. 34, 860--892.
\newblock
  \burlalt{doi:10.1214/EJP.v11-356}{http://dx.doi.org/10.1214/EJP.v11-356}.

\bibitem[FH20]{Friz2020}
\textsc{P.~K. Friz} and \textsc{M.~Hairer}.
\newblock \emph{{A Course on Rough Paths: With an Introduction to Regularity
  Structures}}.
\newblock Universitext. Springer, 2 ed., 2020.

\bibitem[GIP15]{GIP15}
\textsc{M.~Gubinelli}, \textsc{P.~Imkeller}, and \textsc{N.~Perkowski}.
\newblock Paracontrolled distributions and singular {PDE}s.
\newblock \emph{Forum Math. Pi} \textbf{3}, (2015), e6, 75.
\newblock \burlalt{arXiv:1210.2684}{http://arxiv.org/abs/1210.2684}.
\newblock
  \burlalt{doi:10.1017/fmp.2015.2}{http://dx.doi.org/10.1017/fmp.2015.2}.

\bibitem[Gub04]{Gubinelli2004}
\textsc{M.~Gubinelli}.
\newblock {Controlling Rough Paths}.
\newblock \emph{J. Funct. Anal.} \textbf{216}, no.~1, (2004), 86--140.
\newblock \burlalt{arXiv:math/0306433}{http://arxiv.org/abs/math/0306433}.
\newblock
  \burlalt{doi:10.1016/j.jfa.2004.01.002}{http://dx.doi.org/10.1016/j.jfa.2004.01.002}.

\bibitem[Hai14]{Hairer2014d}
\textsc{M.~Hairer}.
\newblock {A theory of regularity structures}.
\newblock \emph{Invent. Math.} \textbf{198}, no.~2, (2014), 269--504.
\newblock \burlalt{arXiv:1303.5113}{http://arxiv.org/abs/1303.5113}.
\newblock
  \burlalt{doi:10.1007/s00222-014-0505-4}{http://dx.doi.org/10.1007/s00222-014-0505-4}.

\bibitem[HL17]{HL17}
\textsc{M.~Hairer} and \textsc{C.~Labb\'{e}}.
\newblock The reconstruction theorem in {B}esov spaces.
\newblock \emph{J. Funct. Anal.} \textbf{273}, no.~8, (2017), 2578--2618.
\newblock
  \burlalt{doi:10.1016/j.jfa.2017.07.002}{http://dx.doi.org/10.1016/j.jfa.2017.07.002}.

\bibitem[Kli67]{cf:Kli}
\textsc{A.~Klinger}.
\newblock The {V}andermonde matrix.
\newblock \emph{Amer. Math. Monthly} \textbf{74}, (1967), 571--574.
\newblock \burlalt{doi:10.2307/2314898}{http://dx.doi.org/10.2307/2314898}.

\bibitem[Lyo98]{Lyons1998}
\textsc{T.~Lyons}.
\newblock {Differential equations driven by rough signals}.
\newblock \emph{Rev. Mat. Iberoam.} \textbf{14}, (1998), 215--310.
\newblock \burlalt{doi:10.4171/RMI/240}{http://dx.doi.org/10.4171/RMI/240}.

\bibitem[MW18]{mw18}
\textsc{A.~Moinat} and \textsc{H.~Weber}.
\newblock Space-time localisation for the dynamic $\phi^4_3$ model.
\newblock \emph{{\rm to appear in} Communications on Pure and Applied
  Mathematics} (2018).
\newblock \burlalt{arXiv:1811.05764}{http://arxiv.org/abs/1811.05764}.

\bibitem[OW19]{ow19}
\textsc{F.~Otto} and \textsc{H.~Weber}.
\newblock Quasilinear {SPDE}s via rough paths.
\newblock \emph{Arch. Ration. Mech. Anal.} \textbf{232}, no.~2, (2019),
  873--950.
\newblock
  \burlalt{doi:10.1007/s00205-018-01335-8}{http://dx.doi.org/10.1007/s00205-018-01335-8}.

\bibitem[RS96]{rs96}
\textsc{T.~Runst} and \textsc{W.~Sickel}.
\newblock \emph{Sobolev spaces of fractional order, {N}emytskij operators, and
  nonlinear partial differential equations}, vol.~3 of \emph{De Gruyter Series
  in Nonlinear Analysis and Applications}.
\newblock Walter de Gruyter \& Co., Berlin, 1996.

\bibitem[ST18]{ST18}
\textsc{H.~Singh} and \textsc{J.~Teichmann}.
\newblock An elementary proof of the reconstruction theorem (2018).
\newblock \burlalt{arXiv:1812.03082}{http://arxiv.org/abs/1812.03082}.

\bibitem[Whi34]{Whi34}
\textsc{H.~Whitney}.
\newblock Analytic extensions of differentiable functions defined in closed
  sets.
\newblock \emph{Trans. Amer. Math. Soc.} \textbf{36}, no.~1, (1934), 63--89.
\newblock \burlalt{doi:10.2307/1989708}{http://dx.doi.org/10.2307/1989708}.

\bibitem[Zam21]{Zambotti20}
\textsc{L.~Zambotti}.
\newblock A brief and personal history of stochastic partial differential
  equations.
\newblock \emph{Discrete and Continuous Dynamical Systems - Series A (DCDS-A)}
  \textbf{41}, no.~1, (2021).
\newblock
  \burlalt{doi:10.3934/dcds.2020264}{http://dx.doi.org/10.3934/dcds.2020264}.

\end{thebibliography}
\end{document}